\def\stopcompilationbeforecode{
  \bibliography{biblio}
  \bibliographystyle{amsalpha}
  \end{document}
}
\providecommand{\href}[2]{#2}
\providecommand*{\backref}{}
\providecommand*{\backrefalt}{}
\renewcommand*{\backref}[1]{}
\renewcommand*{\backrefalt}[4]{%
	\ifcase #1 %
	\or
	  Cited page~#2.
	\else
	  Cited pages~#2.
	\fi
}
\def\sidesnumber{8}
\def\sideslength{1.21}
\newcommand\RegPolygonOriented[7]{
  \RegPolygonVertices{#1}{#2}{#3}
  \RegPolygonNames{#2}{#4}
  \ifthenelse{#7 = \sidesnumber + 1}
    {\draw (#21)
         \foreach \n in {2,...,\sidesnumber}{-- (#2\n)}
         -- cycle;
    }
    {\draw (#21)
         \foreach \n in {2,...,#7}{-- (#2\n)};
    }
  \begin{scope}[decoration={markings, mark=at position 0.5 with {\arrow{>}}}]
  \foreach \n [evaluate=\n as \m using \n+1] in {#5,...,\numexpr #6-1\relax} {
    \draw[decorate] (#2\n) -- (#2\m);
  }
  \foreach \n [evaluate=\n as \m using \n-1] in {#7,...,\numexpr #6+1\relax} {
    \draw[decorate] (#2\n) -- (#2\m);
  }
  \end{scope}
}
\newcommand\RegPolygonVertices[3]{
  \path #1
      \foreach \n in {1,...,\sidesnumber}{%
      coordinate (#2\n) -- ++ ({#3 + (1-\n)*360/\sidesnumber}:\sideslength)};
  \coordinate (#20) at ($(#21)!0.5!(#2\number\numexpr\sidesnumber/2+1\relax)$);
  \coordinate (#2\number\numexpr\sidesnumber+1\relax) at (#21);
}
\newcommand\RegPolygonNames[2]{
  \foreach \text [count=\p] in {#2}
     \draw ($(#10)!0.87!(#1\p)$) node {$\text$};
}
\newcommand\Faisceau[5]{
  \begin{scope}[decoration={markings, mark=at position 0.7 with {\arrow{>}}}]
  \edef\templabels{#5}
  \foreach \n [count=\p] in \templabels {
    \ifthenelse{#2 = 1}
      {\def\myangle{#3}}
      {\def\myangle{#3 - #4/2 + (\p-1)*#4/(#2-1)}}
    \draw[postaction={decorate}]
          #1 -- ++ ({\myangle}:\sideslength);
    \path #1 -- ++ ({\myangle}:{\sideslength+0.25}) node {$\n$};
  }
  \end{scope}
}
\newcommand{\Pbb}{\mathbb{P}}
\newcommand{\Z}{\mathbb{Z}}
\newcommand{\R}{\mathbb{R}}
\newcommand{\Fbb}{\mathbb{F}}
\newcommand{\Sbb}{\mathbb{S}}
\newcommand{\boS}{\mathcal{S}}
\newcommand{\norm}[1]{\left\| #1 \right\|}
\newcommand{\st}{\::\:}
\newcommand{\abs}[1]{\left| #1 \right|}
\newcommand{\lgth}[1]{\left| #1 \right|}
\newcommand{\tsuffix}{t_{\mathrm{suff}}}
\newcommand{\tessential}{t_{\mathrm{ess}}}
\DeclareMathOperator{\Card}{Card}
\renewcommand{\epsilon}{\varepsilon}
\renewcommand{\phi}{\varphi}
\renewcommand{\leq}{\leqslant}
\renewcommand{\geq}{\geqslant}
\newtheorem{thm}{Theorem}[section]
\newtheorem{prop}[thm]{Proposition}
\newtheorem{defn}[thm]{Definition}
\newtheorem*{prop*}{Proposition}
\theoremstyle{definition}
\newtheorem{rmk}[thm]{Remark}
\numberwithin{equation}{section}
\title[Lower bound for the spectral radius on surface groups]
      {A numerical lower bound for the spectral radius of random walks on surface groups}
\author{S\'ebastien Gou\"ezel}
\address{IRMAR, CNRS UMR 6625,
Universit\'e de Rennes 1, 35042 Rennes, France}
\email{sebastien.gouezel@univ-rennes1.fr}
\date{June 6, 2014}
\begin{document}

\begin{abstract}
Estimating numerically the spectral radius of a random walk on a
nonamenable graph is complicated, since the cardinality of balls
grows exponentially fast with the radius. We propose an algorithm to
get a bound from below for this spectral radius in Cayley graphs with
finitely many cone types (including for instance hyperbolic groups).
In the genus $2$ surface group, it improves by an order of magnitude
the previous best bound, due to Bartholdi.
\end{abstract}

\keywords{spectral radius, cogrowth, numerical algorithm, surface
groups}

\maketitle
\section{Main algorithm}

Let $\Gamma$ be a countable group, generated by a finite symmetric
set $S$ of cardinality $\abs{S}$. The simple random walk
$X_0,X_1,\dotsc$ on $\Gamma$ is defined by $X_0=e$ the identity of
$\Gamma$, and $X_{n+1}=X_n s$ with probability $1/\abs{S}$ for any
$s\in S$. A crucial numerical parameter of this random walk is its
\emph{spectral radius} $\rho=\lim \Pbb(X_{2n} = e)^{1/2n}$.
Equivalently, denote by $W_n$ the number of words of length $n$ in
the generators that represent $e$ in $\Gamma$, then $\Pbb(X_n = e) =
W_n/\abs{S}^n$, so that $\rho = \lim W_{2n}^{1/2n} / \abs{S}$. It is
equivalent to study the spectral radius or the \emph{cogrowth} $\lim
W_{2n}^{1/2n}$.

The spectral radius is at most $1$, and $\rho = 1$ if and only if
$\Gamma$ is amenable. In the free group with $d$ generators, the
generating function $\sum W_n z^n$ can be computed explicitly (it is
algebraic), and the exact value of the spectral radius follows: $\rho
= \sqrt{2d-1}/d$. Since words that reduce to the identity in the free
group also reduce to the identity in any group with the same number
of generators, one infers that in any group $\Gamma$, $\rho \geq
2\sqrt{\abs{S}-1}/\abs{S}$. Moreover equality holds if and only if
the Cayley graph of $\Gamma$ is a tree~\cite{kesten}.

In general, there are no explicit formulas for $\rho$, and even
giving precise numerical estimates is a delicate question. In this
short note, we will describe an algorithm giving bounds from below on
$\rho$ in some classes of groups, particularly for the fundamental
group $\Gamma_g$ of a compact surface of genus $g\geq 2$, given by
its usual presentation
  \begin{equation}
  \label{eq:presentation}
  \Gamma_g = \langle a_1,\dotsc, a_g,b_1,\dotsc, b_g \mid [a_1,b_1]\dotsm [a_g, b_g]=e\rangle.
  \end{equation}
Since there are $4$ generators in $\Gamma_2$, the above trivial bound
obtained by comparison to the free group gives $\rho \geq 0.661437$.
Our main estimate is the following result.
\begin{thm}
\label{main_thm}
In the surface group $\Gamma_2$, one has $\rho \geq 0.662772$.
\end{thm}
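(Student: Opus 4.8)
The plan is to estimate the cogrowth (equivalently, the spectral radius) from below by producing, in a computer-assisted way, a large but finite collection of closed words in the Cayley graph of $\Gamma_2$ and a careful count of how many distinct length-$n$ words they generate. The starting observation is that $\rho = \lim W_{2n}^{1/2n}/\abs{S}$, so any lower bound on $W_{2n}$ of the form $W_{2n} \geq C \lambda^{2n}$, valid for all $n$ (or for a subsequence), yields $\rho \geq \lambda/\abs{S}$. The difficulty, as the abstract stresses, is that balls grow exponentially, so one cannot hope to enumerate all words of a given length; instead one exploits that $\Gamma_2$ has \emph{finitely many cone types}, which lets one encode the set of geodesics (or, more usefully here, a suitable set of loops) by a finite automaton and reduce the asymptotic count to the spectral radius of a finite nonnegative matrix.

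First I would fix a finite generating structure: enumerate the cone types of $\Gamma_2$ with respect to $S = \{a_1^{\pm1}, b_1^{\pm1}, a_2^{\pm1}, b_2^{\pm1}\}$, building the transition automaton whose states are cone types and whose edges record which generators move outward. Second, I would design a family of ``returning'' paths: concatenations of an outward geodesic segment, a bounded detour near the far endpoint, and the reverse geodesic back, organized so that the detours can be chosen independently along the path. Counting these reduces to a transfer-operator computation: one writes down a finite nonnegative matrix $M$ (indexed by cone types, weighted by the number of admissible one-step extensions) whose Perron eigenvalue $\lambda_M$ controls the exponential growth rate of the loop count, hence gives $\rho \geq \lambda_M/\abs{S}$. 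The key point is that every ingredient — the automaton, the matrix $M$, its Perron eigenvalue — is a finite, explicitly computable object, so the bound $0.662772$ comes out of a finite rational (or rigorously rounded floating-point) computation; to make it a genuine proof I would carry out the eigenvalue estimate with interval arithmetic, or exhibit a rational sub-eigenvector witnessing $M v \geq \lambda v$ coordinatewise for the claimed $\lambda$.

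The main obstacle I anticipate is the \emph{overcounting / independence} issue: distinct words must not be conflated, and the paths used to populate $W_{2n}$ must be provably distinct as words in the free monoid on $S$, not merely as group elements. This forces one to track enough combinatorial data along each path (essentially, the full cone-type history, possibly refined to guarantee reduced or otherwise normal-form words) so that the map from automaton-paths to words is injective; getting a rich enough automaton while keeping it small enough to compute is the crux of the trade-off, and it is precisely the place where one improves on Bartholdi's earlier, cruder bound. A secondary technical point is promoting the numerical eigenvalue of $M$ to a rigorous inequality, which I would handle by the sub-eigenvector certificate just mentioned, so that the final statement $\rho \geq 0.662772$ is verified rather than merely estimated.
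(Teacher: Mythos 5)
Your proposal is not a variant of the paper's argument but a different strategy altogether --- it is essentially the closed-word--counting strategy of Bartholdi (cactus trees), which the paper explicitly sets out to beat --- and as sketched it contains a gap that I do not believe can be closed. The paper does \emph{not} prove Theorem~\ref{main_thm} by exhibiting a large family of words representing $e$ and bounding $W_{2n}$ from below. It uses the variational characterization of $\rho$ as the norm of the symmetric Markov operator $Q$ on $\ell^2(\Gamma_2)$: one writes down an explicit test function $u$, constant on the classes of points at distance $n$ from $e$ having a given (refined) type, with value $b_i\alpha^n$, and bounds $\rho\geq \langle Qu,u\rangle/\langle u,u\rangle$. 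Letting $\alpha\to e^{-v/2}$ and optimizing over the $b_i$ reduces everything to the top eigenvalue of a finite symmetric matrix built from the type transition matrix and the Perron eigenvector of $\tilde M$ (Theorem~\ref{main_alg}). Even then, Cannon's cone types alone only give $0.662477$; reaching $0.662772$ requires the much finer ``essential type'' system with a weighted truncation, producing a sparse matrix of size about $9\cdot 10^6$.

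The concrete gap in your plan is quantitative. The family you describe --- an outward geodesic of length $n$, a bounded detour at the far end, and the reverse geodesic back --- produces only on the order of $C e^{nv}$ distinct closed words of length $2n+O(1)$, where $e^{v}\approx 6.98$ is the growth of $\Gamma_2$. This yields $W_{2n}\gtrsim (e^{v/2})^{2n}$ and hence only $\rho\geq e^{v/2}/\abs{S}\approx 0.33$, far below even the trivial free-group bound $0.6614$: the overwhelming majority of closed words do not travel out to distance $n$ and back, so any such family is exponentially too sparse. To get a competitive bound by path counting one must attach sub-loops recursively at every vertex (this is exactly what Bartholdi's cactus trees do, and the bookkeeping is governed by an algebraic system of generating functions, not by the Perron eigenvalue of the cone-type automaton); yet even that construction tops out at $0.662421$ in genus $2$, strictly below the claimed $0.662772$. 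Note also that the paper's geometric reinterpretation of its bound (Section~\ref{sec:geometric}) as the return exponent of a randomized walk on a quotient space $Y$ does \emph{not} come from injecting paths of $Y$ into distinct words of $\Gamma_2$ --- the projected walk is not even Markov --- so there is no known way to recast the bound of Theorem~\ref{main_thm} as a count of distinct closed words. The missing idea is precisely the $\ell^2$ Rayleigh-quotient argument with a carefully refined type system.
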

This improves on the previously best known result, due to
Bartholdi~\cite{bartholdi_rho}, giving $\rho \geq \rho_{\mathrm{Bar}}
= 0.662421$.\footnote{Bartholdi claims that $\rho\geq 0.662418$, but
implementing his algorithm in multiprecision one gets in fact the
better bound $\rho\geq 0.662421$.} Bartholdi's method is to study a
specific class of paths from the identity to itself (called cactus
trees), for which he can compute the generating function. The radius
of convergence of this generating function is a lower bound for
$\rho$.

The best known upper bound for $\rho$ in $\Gamma_2$ is $\rho \leq
\rho_{\mathrm{Nag}} = 0.662816$, due to
Nagnibeda~\cite{nagnibeda_rho}. Non-rigorous numerical
estimates\footnote{
I obtained this estimate as follows: one can count exactly the number
$W_n$ of words of length $n$ representing the identity in the group,
for reasonable $n$, say up to $n=24$ -- for the record,
$W_{24}=4214946994935248$ -- giving the first values of the sequence
$p_n=\Pbb(X_{2n}=e)$. We know rigorously from~\cite{gouezel_lalley}
that $p_n \sim C \rho^{2n}/n^{3/2}$ when $n\to \infty$. Define
$q_n=\log(n^{3/2} p_n)/(2n)$, it follows that $q_n\to \log \rho$. In
the free group, where $p_n$ is known very explicitly, the sequence
$q_n$ has a further expansion in powers of $1/n$. Assuming that the
same holds in the surface group, we get $q_n = \log \rho +
\sum_{k=1}^K a_k/n^k + o(1/n^K)$ where the $a_k$ are unknown. Using
the known value of $q_{24}$, this gives an estimate for $\log \rho$,
with an error of the order of $1/24$, which is very bad. However, it
is possible to accelerate the convergence of sequences having an
asymptotic expansion in powers of $1/n$: there are explicit recipes
(for instance Richardson extrapolation or Wynn's rho algorithm)
taking such a sequence, and giving a new sequence converging to the
same limit, with an expansion in powers of $1/n$, but starting at
$1/n^2$. Iterating this process, one can eliminate the first few
terms, and get a speed of convergence $O(1/n^L)$ for any $L$ (but one
needs to know enough terms of the initial sequence). Applying this
process to our sequence $q_n$, one gets the claimed estimate for
$\rho$. To make this rigorous, one would need to know that an
asymptotic expansion of $q_n$ exists, with explicit bounds on the
$a_k$ and on the $o(1/n^K)$ term. This seems completely out of reach.
} suggest that $\rho = 0.662812\dotsc$, so the upper bound is still
sharper than our lower bound, although our lower bound is an order of
magnitude better than the bound of~\cite{bartholdi_rho}: indeed,
$\rho_{\mathrm{Nag}} - \rho_{\mathrm{Bar}} \sim 4.10^{-4}$ while our
estimate $\rho$ from Theorem~\ref{main_thm} satisfies
$\rho_{\mathrm{Nag}} - \rho \sim 4.10^{-5}$.

Nagnibeda's upper bound does not rely on a counting argument for
closed paths, but on another spectral interpretation of $\rho$.
Indeed, $\rho$ is also the spectral radius of the Markov operator $Q$
on $\ell^2(\Gamma)$ corresponding to the random walk, i.e., the
convolution with the probability measure $\mu$ which is uniformly
distributed on $S$ (see for instance~\cite[Corollary 10.2]{woess}).
It is also the norm of this operator, since it is symmetric.
Nagnibeda gets the above upper bound by using a lemma of Gabber about
norms of convolution operators on graphs and the precise geometry of
$\Gamma_2$.

\medskip

Our approach to get Theorem~\ref{main_thm} is very similar to
Nagnibeda's. To bound from below the norm of the convolution operator
$Q$, it is sufficient to exhibit one function $u$ (which ought to be
close to an hypothetic eigenfunction for the element $\rho$ of the
spectrum of $Q$) for which $\norm{Qu}/\norm{u}$ is large. This is
exactly what we will do.

For any $\alpha<\rho$, the function $u_\alpha=\sum_{n=0}^\infty
\alpha^n Q^n \delta_e$ is in $\ell^2$, and
$\norm{Qu_\alpha}/\norm{u_\alpha}$ converges to $\rho$ when $\alpha$
tends to $\rho$. Unfortunately, $u_\alpha$ is not explicit enough. To
find estimates, one should rather find an ansatz for the function
$u$, depending on finitely many parameters, and then optimize over
these parameters.

A first strategy would be the following: take a very large ball $B_n$
in the Cayley graph, and compute the function $u$ supported in this
ball such that $\norm{Qu}/\norm{u}$ is largest. This gives a lower
bound $\rho_n$ on $\rho$, and $\rho_n$ converges to $\rho$ when $n$
tends to infinity. However, this strategy is computationally not
efficient at all: one would need to take a very large $n$ to obtain
good estimates (since most mass of $u_\alpha$ is supported close to
infinity if $\alpha$ is close to $\rho$), and the cardinality of
$B_n$ grows exponentially with $n$. On the other hand, it can be
implemented in any finitely presented group for which the word
problem is solvable (see for instance~\cite{cogrowth_thompson} for
examples in Baumslag-Solitar and Thompson groups). We will use a more
efficient method, but which requires more assumptions on the group:
it should have finitely many cone types.

\medskip

To illustrate our method of construction of $u$, let us describe it
quickly in the case of the free group $\Fbb_d$ with $d$ generators.
The sphere $\Sbb^n$ of radius $n\geq 1$ has cardinality
$2d(2d-1)^{n-1}$. Fix some $\alpha<1/\sqrt{2d-1}$, and define a
function $u_\alpha$ by $u_\alpha(x) = \alpha^n$ for $x\in \Sbb^n$,
$n\geq 1$. This function belongs to $\ell^2(\Fbb_d)$. We write $x\sim
y$ if $x$ and $y$ are neighbors in the Cayley graph of $\Gamma$, and
$x\to y$ if $x\sim y$ and $d(e,y)=d(e,x)+1$. Then
  \begin{equation*}
  \langle Qu_\alpha, u_\alpha \rangle = \frac{1}{2d} \sum_{x\sim y} u_\alpha(x)u_\alpha(y)
  = \frac{1}{d} \sum_{x\to y} u_\alpha(x) u_\alpha(y)
  = \frac{1}{d} \sum_{n=1}^\infty (2d-1)\alpha^{2n+1} \abs{\Sbb^n},
  \end{equation*}
since a point in $\Sbb^n$ has $2d-1$ successors in $\Sbb^{n+1}$.
Since $\langle u_\alpha, u_\alpha\rangle = \sum_{n=1}^\infty
\alpha^{2n} \abs{\Sbb^n}$, we get
  \begin{equation*}
  \langle Qu_\alpha, u_\alpha\rangle = \alpha \frac{2d-1}{d} \langle u_\alpha, u_\alpha \rangle.
  \end{equation*}
Hence, $\rho = \norm{Q} \geq \alpha(2d-1)/d$. Letting $\alpha$ tend
to $1/\sqrt{2d-1}$, we finally obtain $\rho\geq \sqrt{2d-1}/d$, which
is the true value of the spectral radius.

\medskip

In the free group, it is natural to take a function $u$ that is
constant of the sphere $\Sbb^n$ of radius $n$, since all the points
in such a sphere are equivalent: the automorphisms of the Cayley
graph of $\Fbb_d$ fixing the identity act transitively on $\Sbb^n$.
In more general groups, for instance surface groups, this is not the
case. Intuitively, we would like to take a function that decays
exponentially as above, but with different values on different
equivalence classes under the automorphism group. However, this
automorphism group is finite in the case of surface groups, so
instead of true equivalence classes (which are finite), we will
consider larger classes, of points that ``locally behave in the same
way'', and we will construct functions that are constants on such
classes of points (leaving only finitely many parameters which one
can optimize using a computer).

This intuition is made precise with the notion of \emph{type} of the
elements of the group (as in~\cite{nagnibeda_rho}). Let $\Gamma$ be a
countable group generated by a finite symmetric set $S$. Assume that
there are no cycles of odd length, so that any edge can be oriented
from the closer point from $e$ to the farther point. Let $\boS(x)$ be
the set of successors of $x$, i.e., the points $y$ which are
neighbors of $x$ with $d(e,y) = d(e,x) + 1$.

\begin{defn}
\label{type_system}
Let $T$ be a finite set, let $t$ be a function from $\Gamma$ to $T$
and let $M$ be a square matrix indexed by $T$. We say that $(T,t,M)$
is a \emph{type system} for $(\Gamma,S)$ if, for all $i$ and $j$ in
$T$, for all but finitely many $x\in \Gamma$ with $t(x)=j$, one has
  \begin{equation*}
  \Card\{y\in \boS(x) \st t(y) = i\} = M_{ij}.
  \end{equation*}
We will often simply say that $t$ is a type system, since it
determines $T$ and $M$.
\end{defn}
In other words, if one knows the type of a point $x$, then one knows
the number of successors of each type, thanks to the matrix $M$. For
instance, in $\Fbb_d$, one can use one single type, with $M_{11} =
2d-1$: every point but the identity has $2d-1$ successors.

Using a type system, we will be able to find a lower bound for the
spectral radius of the simple random walk. While the argument works
in general, it is more convenient to formulate using an additional
assumption, which is satisfied for surface groups.
\begin{defn}
A type system $(T,t,M)$ is Perron-Frobenius if the matrix $M$ is
Perron-Frobenius, i.e., some power $M^n$ has only positive entries.
\end{defn}

The algorithm to estimate the spectral radius follows.
\begin{thm}
\label{main_alg}
Let $(\Gamma,S)$ be a countable group with a finite symmetric
generating set, whose Cayley graph has no cycle of odd length. Let
$(T,t,M)$ be a Perron-Frobenius type system for $(\Gamma,S)$.

Define a new matrix $\tilde M$ by $\tilde M_{ij} = M_{ij}/p_i$, where
$p_i$ is the number of predecessors of a point of type $i$ (it is
given by $p_j = \abs{S}-\sum_i M_{ij}$). Since it is
Perron-Frobenius, its dominating eigenvalue $e^v$ is simple. Let
$(A_1,\dotsc, A_k)$ be a corresponding eigenvector, with positive
entries, let $D$ be the diagonal matrix with entries $A_i$, and let
$M'=D^{-1/2}MD^{1/2}$. Define
  \begin{equation}
  \label{def:lambda}
  \lambda=\max_{\abs{q}=1} \langle M'q,q\rangle.
  \end{equation}
Then
  \begin{equation}
  \label{eq_main_rho}
  \rho \geq \frac{2 e^{-v/2} \lambda}{\abs{S}}.
  \end{equation}
\end{thm}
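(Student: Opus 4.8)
The plan is to mimic the free-group computation from the introduction, but with a test function that is constant on each type rather than on each sphere. First I would fix $\alpha<\rho$ and, guided by the Perron--Frobenius eigenvector, build a function $u$ on $\Gamma$ of the form $u(x)=\alpha^{d(e,x)}\,c_{t(x)}\,q_{t(x)}$, where the $q_i$ are the free parameters (later optimized, giving rise to $\lambda$) and the constants $c_i$ are chosen so that the sphere-by-sphere bookkeeping closes up. Concretely, writing $\boldsymbol{1}_n$ for the indicator of the sphere $\Sbb^n$, one has $\langle Qu,u\rangle=\tfrac{2}{\abs{S}}\sum_{x\to y}u(x)u(y)$ because the Cayley graph is bipartite, and $\sum_{x\to y}u(x)u(y)$ splits, up to the finitely many exceptional vertices of Definition~\ref{type_system}, as $\alpha\sum_n\alpha^{2n}\sum_{j}(\text{mass of type }j\text{ in }\Sbb^n)\sum_i M_{ij}c_ic_jq_iq_j$. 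So the natural first step is to record how the type-$j$ mass propagates: a point of type $j$ in $\Sbb^n$ produces $M_{ij}$ points of type $i$ in $\Sbb^{n+1}$, while each type-$i$ point has $p_i$ predecessors; hence the vector of masses transforms by the transpose of $\tilde M$, up to bounded error. This is where $\tilde M$ and its eigenvalue $e^v$ enter.

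The second step is the change of variables $M'=D^{-1/2}MD^{1/2}$, which symmetrizes the problem. Choosing $c_i=A_i^{-1/2}$ (so that $c_ic_jM_{ij}=M'_{ij}\cdot(\text{something diagonalizable})$) and using that $(A_i)$ is a $\tilde M$-eigenvector, one should get that both $\langle Qu,u\rangle$ and $\langle u,u\rangle$ are, up to the bounded perturbation coming from the exceptional vertices, geometric-type sums in the variable $\alpha^2 e^{v}$ (times $\abs{S}$-independent constants), with numerator carrying an extra factor $\tfrac{2\alpha}{\abs{S}}\langle M'q,q\rangle$. Taking the ratio, the geometric sums cancel and one is left with
\[
\frac{\langle Qu,u\rangle}{\langle u,u\rangle}\;\geq\; \frac{2\alpha\,e^{v/2}}{\abs{S}}\,\langle M'q,q\rangle\;-\;o(1)\quad\text{as }\alpha\nearrow \rho,
\]
uniformly over unit vectors $q$. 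The third step is then routine: optimize over $q$ to replace $\langle M'q,q\rangle$ by $\lambda$, let $\alpha\to\rho$ to kill the error term, and appeal to $\rho=\norm{Q}\geq\langle Qu,u\rangle/\langle u,u\rangle$ to conclude $\rho\geq \tfrac{2\rho\, e^{v/2}\lambda}{\abs{S}}$. Rearranging gives exactly \eqref{eq_main_rho}.

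The main obstacle is controlling the finitely many exceptional vertices — the ones where $t$ fails to predict the successor counts, including the ball of small radius near $e$ where the type data is degenerate. These contribute a correction to $\sum_{x\to y}u(x)u(y)$ and to $\norm{u}^2$. The point is that $u\in\ell^2$ for $\alpha<\rho$ (one needs $\alpha^2 e^{v}<1$, which holds for $\alpha$ close enough to $\rho$ once one checks $e^{-v/2}\leq \rho\cdot\abs{S}/(2\lambda)$ is consistent, or more simply by a direct comparison $\alpha<\rho$ forces square-summability via the free-group lower bound on sphere growth), so the exceptional contribution is a \emph{fixed finite} quantity independent of how the tail is summed, while $\norm{u}^2\to\infty$ as $\alpha\nearrow\rho$; hence the exceptional terms wash out in the ratio. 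Making the "up to bounded error" precise for the propagation of type-masses — i.e. proving that the exact mass vector and the $\tilde M^{\top}$-iterated vector differ by a vector whose contribution to both bilinear forms is summably small against $\alpha^{2n}$ — is the one estimate that has to be done carefully; everything else is linear algebra and the Rayleigh-quotient bound for $\norm{Q}$.
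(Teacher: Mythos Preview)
Your overall strategy---build a test function $u(x)=\alpha^{\lgth{x}}b_{t(x)}$, compute the Rayleigh quotient $\langle Qu,u\rangle/\langle u,u\rangle$, and optimize over the $b_i$---is exactly the paper's. But the execution has a genuine error in the choice of limit, and this propagates to the final step, which does not give what you claim.

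The condition for $u\in\ell^2$ is $\alpha<e^{-v/2}$, \emph{not} $\alpha<\rho$: since $s_n(i)\sim c'A_ie^{nv}$, one has $\norm{u}^2\asymp\sum_n(\alpha^2e^v)^n$, which converges precisely when $\alpha^2e^v<1$. Your attempted justification (``$\alpha<\rho$ forces square-summability via the free-group lower bound on sphere growth'') is backwards: already in $\Fbb_d$ one has $\rho=\sqrt{2d-1}/d>1/\sqrt{2d-1}=e^{-v/2}$, so $\alpha<\rho$ does not imply $\alpha<e^{-v/2}$. The correct move is to let $\alpha\nearrow e^{-v/2}$; this is what makes both $\langle Qu,u\rangle$ and $\langle u,u\rangle$ diverge like $(1-\alpha^2e^v)^{-1}$, so that the bounded exceptional contribution washes out. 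Done correctly, the ratio tends to
\[
\frac{2\alpha}{\abs{S}}\cdot\frac{\sum_{i,j}A_jb_jM_{ij}b_i}{\sum_iA_ib_i^2}
\quad\xrightarrow[\alpha\to e^{-v/2}]{}\quad
\frac{2e^{-v/2}}{\abs{S}}\cdot\frac{\sum_{i,j}A_jb_jM_{ij}b_i}{\sum_iA_ib_i^2},
\]
with no stray $e^{v/2}$ factor. Substituting $b_i=A_i^{-1/2}q_i$ turns the last fraction into $\langle M'q,q\rangle/\abs{q}^2$, and optimizing over $q$ yields~\eqref{eq_main_rho} directly.

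Your displayed inequality has an extra $e^{v/2}$ that should not be there, and your final ``rearranging'' step is algebraically wrong in any case: from $\rho\geq \tfrac{2\rho e^{v/2}\lambda}{\abs{S}}$ one would only deduce $1\geq \tfrac{2e^{v/2}\lambda}{\abs{S}}$, which is a statement about $\lambda$ and $v$, not a lower bound on $\rho$. The inequality~\eqref{eq_main_rho} is obtained not by cancelling $\rho$ from both sides but by sending $\alpha$ to the $\ell^2$ threshold $e^{-v/2}$.
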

\begin{proof}
Let $s_n(i)=\Card\{x\in \Sbb^n \st t(x) = i\}$. By definition of a
type system, if $n$ is large enough (say $n\geq n_0$),
  \begin{equation*}
  p_i s_{n+1}(i) = \sum_{y\in \Sbb^n} \Card\{x\in \boS(y) \st t(x) = i\}
  = \sum_j M_{ij} s_n(j).
  \end{equation*}
This shows that $s_{n+1}=\tilde M s_n$. Therefore, the cardinality of
$\Sbb^n$ grows like $c e^{nv}$ for some $c>0$. Moreover, $s_n(i) = c'
A_i e^{nv} + O(e^{n(v-\epsilon)})$ for some $\epsilon>0$.

Take some parameters $b_1,\dotsc, b_k>0$ to be chosen later, and let
$\alpha< e^{-v/2}$. We define a function $u_\alpha$ by $u_\alpha(x) =
\alpha^n b_i$ if $x\in \Sbb^n$ and $t(x)=i$ with $n\geq n_0$. For
$n<n_0$, let $u_\alpha(x) = 0$. We have when $\alpha$ tends to
$e^{-v/2}$
  \begin{align*}
  \langle Qu_\alpha, u_\alpha\rangle
  &
  =\frac{1}{\abs{S}} \sum_{x\sim y} u_\alpha(x) u_\alpha(y)
  =\frac{2}{\abs{S}} \adjustlimits \sum_x \sum_{y\in \boS(x)} u_\alpha(x) u_\alpha(y)
  \\&
  =\frac{2}{\abs{S}} \adjustlimits \sum_{n\geq n_0} \sum_{i,j} s_n(j) b_j\alpha^n M_{ij}b_i \alpha^{n+1}
  = \frac{2\alpha}{\abs{S}} \adjustlimits \sum_{n\geq n_0} \sum_{i,j} c'A_j e^{nv} b_j\alpha^{2n} M_{ij}b_i
  +O(1)
  \\&
  = \frac{2\alpha}{\abs{S}} \sum_{i,j} c' A_j b_j M_{ij}b_i / (1-\alpha^2 e^v)
  +O(1).
  \end{align*}
On the other hand,
  \begin{align*}
  \langle u_\alpha, u_\alpha \rangle
  & = \sum_{n\geq n_0} \sum_i s_n(i) b_i^2 \alpha^{2n}
  = \sum_{n\geq n_0} \sum_i c'A_i e^{nv} b_i^2 \alpha^{2n} + O(1)
  \\&
  = \sum_i c' A_i b_i^2 /(1-\alpha^2 e^v) + O(1).
  \end{align*}
We have $\rho \geq \langle Qu_\alpha, u_\alpha\rangle/\langle
u_\alpha, u_\alpha \rangle$. Comparing the above two equations and
letting $\alpha$ tend to $e^{-v/2}$, we get
  \begin{equation*}
  \rho \geq \frac{2e^{-v/2}}{\abs{S}} \frac{ \sum_{i,j} A_j b_j M_{ij} b_i}{\sum_i A_i b_i^2}.
  \end{equation*}
To conclude, we need to optimize in $b_i$. Writing $b_i$ as
$A_i^{-1/2}c_i$, this lower bound becomes
  \begin{equation*}
  \frac{2e^{-v/2}}{\abs{S}} \frac{ \sum A_j^{1/2} c_j M_{ij} A_i^{-1/2} c_i}{\sum c_i^2}.
  \end{equation*}
The maximum of the last factor is the maximum on the unit sphere of
the quadratic form with matrix $M' = D^{-1/2} M D^{1/2}$. This
proves~\eqref{eq_main_rho}.
\end{proof}

\begin{rmk}
\label{rmk:sym}
It follows from the formula~\eqref{def:lambda} that $\lambda$ is the
maximum on the unit sphere of $\langle M'' q, q\rangle$, where $M''$
is the symmetric matrix $(M'+{M'}^*)/2$. Since any symmetric matrix
is diagonal in some orthogonal basis, it also follows that $\lambda$
is the maximal eigenvalue of $M''$, i.e., its spectral radius. Hence,
it is easy to compute using standard algorithms.
\end{rmk}

The formula given by Theorem~\ref{main_alg} depends not only on the
geometry of the group, but also on the choice of a type system: in a
given group (with a given system of generators), there may be several
type systems, giving different estimates. We will take advantage of
this fact for surface groups in Section~\ref{sec:surface}: applying
Theorem~\ref{main_alg} with the canonical type system for surface
groups, constructed by Cannon, we obtain in~\eqref{eq:lowerrho1} an
estimate for the spectral radius which is weaker than the estimate of
Theorem~\ref{main_thm}. This stronger estimate is proved by applying
Theorem~\ref{main_alg} to a different type system, constructed as a
refinement of the canonical type system.

This dependence on the choice of a type system should be contrasted
with the upper bound of Nagnibeda in~\cite{nagnibeda_rho}. Indeed, it
is shown in~\cite{nagnibeda_geometric} that this upper bound,
computed using a type system, has a purely geometric interpretation
(it is the spectral radius of a random walk on the tree of geodesics
of the group), which does not depend on the choice of the type
system. In particular, the refined type system we use to prove
Theorem~\ref{main_thm} can not improve the upper bound of Nagnibeda.

\medskip

\textbf{Acknowledgments:} We thank the anonymous referee for pointing
out the reference~\cite{nagnibeda_geometric}, and suggesting that
there might be a possible geometric interpretation to the lower bound
in Theorem~\ref{main_alg}, as in~\cite{nagnibeda_geometric}. This led
to Section~\ref{sec:geometric} below.

\section{Geometric interpretation}
\label{sec:geometric}

In this section, we describe a geometric interpretation of
Theorem~\ref{main_alg}, similar to Nagnibeda's interpretation
in~\cite{nagnibeda_geometric} of the bound she obtained
in~\cite{nagnibeda_rho}.

\medskip

We first recall Nagnibeda's construction. Consider a group $\Gamma$
with a finite system of generators $S$, whose Cayley graph has no
cycle of odd length. Let $X$ be its tree of geodesics, i.e., the
graph whose vertices are the finite geodesics in $\Gamma$ originating
from the identity $e$, and where one puts an edge from a geodesic
with length $n$ to its extensions with length $n+1$. There is a
canonical projection $\pi_X$ from $X$ to $\Gamma$, taking a geodesic
to its endpoint. One can think of $X$ as obtained from $\Gamma$ by
unfolding the loops based at $e$.

Consider the random walk in $X$ whose transitions are as follows:
from $x$, one goes to any of its successors with probability
$1/\abs{S}$, and to its unique predecessor with probability
$p_x/\abs{S}$ where $p_x$ is $\abs{S}$ minus the number of successors
of $x$ (it is the number of predecessors in $\Gamma$ of the
projection $\pi_X(x)$). This random walk on $X$ does \emph{not}
project to the simple random walk on $\Gamma$, since it does not
follow loops in $\Gamma$ (the projected random walk is not Markov in
general). The transition probabilities coincide when going towards
infinity, but not when going back towards the identity. One expects
that the probability to come back to the identity is higher in $X$
than in $\Gamma$, thanks to the following heuristic: since the
process in $X$ is less random when coming back toward the identity,
once the walk is in a subset where it comes back often to the
identity, it can not escape easily from this subset, and therefore
returns even more.

To illustrate this heuristic, suppose that two points $x$ and $x'$ in
$X$ (with $p_x=p_{x'}=2$) have successors $y$ and $y'$ in $X$, and
consider a new random walk in which $y$ and $y'$ are identified (this
is what the projection $\pi_X$ does, all over the place), so that
from this new point one can either jump back to $x$ or to $x'$ with
probability $1/\abs{S}$. Let $u_n$ and $u'_n$ be the probabilities in
$X$ to be at time $n$ at $x$ and $x'$. For the sake of the argument,
we will assume some form of symmetry, i.e., $u_n$ and $u'_n$ are also
the probabilities to reach $e$ at time $n$ starting respectively from
$x$ or $x'$. In $X$, one can form paths from $e$ to itself of length
$2n+2$ by jumping to $x$ in time $n$, then to $y$, then back to $x$,
and then from $x$ to $e$. This happens with probability
  \begin{equation*}
  u_n\cdot \frac{1}{\abs{S}} \cdot \frac{2}{\abs{S}} \cdot u_n.
  \end{equation*}
One can do the same with $x'$, giving an overall probability
$a=\frac{2}{\abs{S}^2} (u_n^2+{u'_n}^2)$. On the other hand, if $y$
and $y'$ are identified, then from this new point one can either jump
back to $x$ or to $x'$. The corresponding probability to come back to
$e$ at time $2n+2$ following such paths is therefore
$b=\frac{1}{\abs{S}^2}(u_n+u'_n)^2$. As $2(v^2+w^2) \geq (v+w)^2$, we
have $a\geq b$, i.e., the probability of returning to $e$ using
corresponding paths is bigger in $X$ than in the random walk where
$y$ and $y'$ are identified. This explains our heuristic that more
randomness in the choice of predecessors in the graph creates a
mixing effect that decreases the spectral radius.

In~\cite{nagnibeda_rho} and~\cite{nagnibeda_geometric}, Nagnibeda
justifies this heuristic rigorously as follows. Consider a group
$\Gamma$ and a generating system $S$ such that the Cayley graph of
$\Gamma$ with respect to $S$ has no cycle of odd length, and finitely
many cone types. By applying a spectral lemma of Gabber, she gets an
upper bound $\rho$ (given by a minimax formula, complicated to
estimate in general) for the spectral radius $\rho_\Gamma$ of the
simple random walk on $(\Gamma,S)$. Since the tree of geodesics $X$
also has finitely many cone types, she is able to compute exactly the
spectral radius $\rho_X$ of the random walk in $X$. It turns out that
this is exactly $\rho$. Hence, $\rho_\Gamma \leq \rho_X$, the
interest of this formula being that $\rho_X$ can be easily computed
(it is algebraic as the tree $X$ has finitely many cone types,
see~\cite{nagnibeda_tree_cone_types}). Note however that this bound
does not come from a direct argument using the projection $\pi_X:X\to
\Gamma$, but rather from two separate computations in $X$ and in
$\Gamma$.

\medskip

We now turn to a similar geometric interpretation of the lower bound
given in Theorem~\ref{main_alg}. We are looking for a natural random
walk, related to the original random walk on $\Gamma$, whose spectral
radius can be computed exactly and coincides with the lower bound
given in~\eqref{eq_main_rho}. Following the above heuristic, this
random walk should have more randomness than the original random walk
regarding the choice of predecessors, to decrease the probability to
return to the origin.

We use the setting and notations of Theorem~\ref{main_alg}. In
particular, $\Gamma$ is a group with a type system $(T,t,M)$, and
$(A_1,\dotsc, A_k)$ and $\tilde M$ are defined in the statement of
this theorem. We define a random walk as follows: It is a walk on the
space $Y = \Z \times T$ (where $T$ is the space of types), whose
transition probabilities are given by:
  \begin{equation}
  \label{eq:trans_Y}
  p( (n,j)\to (n+1,i)) = M_{ij}/\abs{S}, \quad p((n,j)\to (n-1,i))=e^{-v} \frac{A_i M_{ji}}{A_j \abs{S}}.
  \end{equation}
The spectral radius of this random walk is by definition $\rho_Y=\lim
\Pbb(X_{2n} = e)^{1/2n}$ (this is \emph{not} a spectral definition).
Note that this random walk admits a quasi-transitive $\Z$-action
(i.e., $Y$ is endowed with a free action of $\Z$, with finite
quotient, and the transition probabilities are invariant under $\Z$).
Such random walks are well studied, see for instance~\cite[Section
8.B]{woess}.

\begin{thm}
\label{thm:geometric}
With the notations of Theorem~\ref{main_alg}, the random walk on $Y$
has spectral radius $\rho_Y=2e^{-v/2} \lambda / \abs{S}$.
\end{thm}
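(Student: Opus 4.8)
The plan is to compute the spectral radius $\rho_Y$ of this $\Z$-periodic random walk on $Y = \Z \times T$ by the standard Fourier/Perron-Frobenius technique for quasi-transitive $\Z$-invariant walks (as in~\cite[Section 8.B]{woess}). Because the transition probabilities in~\eqref{eq:trans_Y} depend only on the difference of $\Z$-coordinates, the return probability generating function diagonalizes under the $\Z$-action: for each $\theta \in [0,2\pi)$ one gets a finite matrix $P(\theta)$ indexed by $T$, namely
  \begin{equation*}
  P(\theta)_{ij} = e^{i\theta} \frac{M_{ij}}{\abs{S}} + e^{-i\theta} e^{-v}\frac{A_i M_{ji}}{A_j \abs{S}},
  \end{equation*}
and the spectral radius $\rho_Y$ equals $\max_\theta \varrho(P(\theta))$, where $\varrho$ denotes the largest modulus of an eigenvalue (for the "counting return probabilities" definition, the relevant quantity is really $\max_\theta$ of the Perron eigenvalue of the nonnegative matrix obtained at $\theta$ such that all entries are real and positive, i.e.\ $\theta=0$ after a suitable conjugation — see below).

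The key computational step is to conjugate $P(0)$ by the diagonal matrix $D^{1/2}$ (entries $A_i^{1/2}$) to symmetrize it. Writing $P(0)_{ij} = \frac{1}{\abs{S}}\bigl(M_{ij} + e^{-v} \frac{A_i}{A_j} M_{ji}\bigr)$ and setting $N = D^{-1/2} P(0) D^{1/2}$, one computes
  \begin{equation*}
  N_{ij} = \frac{1}{\abs{S}}\Bigl( A_i^{-1/2} M_{ij} A_j^{1/2} + e^{-v} A_i^{1/2} M_{ji} A_j^{-1/2}\Bigr)
  = \frac{1}{\abs{S}}\bigl( M'_{ij} + e^{-v} M'_{ji} \bigr),
  \end{equation*}
using the definition $M' = D^{-1/2} M D^{1/2}$ and noting that $D^{-1/2}(M^{\mathsf T})D^{1/2}$ has $(i,j)$ entry $A_i^{-1/2} M_{ji} A_j^{1/2}$, whose transpose-read entry is $A_j^{-1/2}M_{ij}A_i^{1/2} = (M')_{ij}$. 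Crucially, the eigenvector relation from Theorem~\ref{main_alg} — that $e^v$ is the Perron eigenvalue of $\tilde M$ with eigenvector $A$, i.e.\ $\sum_j \tilde M_{ij} A_j = e^v A_i$, equivalently $\sum_j M_{ij} A_j / p_i = e^v A_i$ — must be combined with the fact that $M'$ and $e^{-v}(M')^{\mathsf T}$ have the \emph{same} Perron data after the rescaling is absorbed, so that $N = \frac{1}{\abs{S}}(M' + e^{-v}{M'}^{\mathsf T})$ becomes, up to the global factor, exactly $\frac{2 e^{-v/2}}{\abs{S}}\cdot\frac{M'' }{\ } $ conjugated appropriately. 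More precisely I expect to insert a second diagonal conjugation by $e^{v/4}$ (or rather to rescale the $\Z$-step), turning $M' + e^{-v}{M'}^{\mathsf T}$ into $e^{-v/2}(e^{v/2}(\text{?}))$... the clean statement is that after conjugating by the diagonal matrix $\mathrm{diag}(e^{-v n/4})$ along the $\Z$-direction one gets precisely $2e^{-v/2}(M'+{M'}^{\mathsf T})/2 = 2e^{-v/2} M''$ in the notation of Remark~\ref{rmk:sym}, whose spectral radius is $\lambda$. Hence $\rho_Y = 2 e^{-v/2}\lambda/\abs{S}$.

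To make the $\theta=0$ reduction rigorous one uses that $Y$ with transitions~\eqref{eq:trans_Y} is a $\Z$-periodic walk with all transition weights positive on the relevant directed structure, so by the Perron-Frobenius analysis of such walks (\cite[Section 8.B]{woess}, or a direct saddle-point / Cramér-type argument on the periodized Green function $\sum_n z^n \Pbb(X_n = e)$) the radius of convergence is governed by the minimum over the real parameter of the Perron eigenvalue of the "tilted" matrix $\sum_{k} z^k P_k$ — and this minimum is attained at the balanced tilt, which is exactly the conjugation by $e^{-vn/4}$ that symmetrizes $P$. The main obstacle I anticipate is bookkeeping the three diagonal rescalings (by $A_i^{1/2}$ across $T$, by $e^{-vn/4}$ across $\Z$, and the interplay with $\tilde M = M/p$ versus $M$ itself) so that the constant comes out as $2e^{-v/2}\lambda/\abs{S}$ and not some cousin of it; once the algebra is arranged so that the symmetrized tilted matrix is literally $(2e^{-v/2}/\abs{S})M''$, the identification $\varrho(M'') = \lambda$ from Remark~\ref{rmk:sym} finishes the argument.
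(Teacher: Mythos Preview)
Your approach is essentially the same as the paper's: both use the Woess result that for a $\Z$-periodic walk $\rho_Y = \min_{c\in\R} \rho(e^c P^+ + e^{-c} P^-)$, then identify the minimizing tilt and read off the value. Your endpoint computation is correct: after conjugating by $D^{-1/2}$ and setting $c=-v/2$, the tilted matrix becomes $\frac{e^{-v/2}}{\abs{S}}(M' + (M')^{\mathsf T}) = \frac{2e^{-v/2}}{\abs{S}} M''$, whose spectral radius is $\lambda$ by Remark~\ref{rmk:sym}.

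There is, however, a genuine gap. You assert that ``this minimum is attained at the balanced tilt, which is exactly the conjugation by $e^{-vn/4}$ that symmetrizes $P$'', but you never prove that the minimizer of $c\mapsto \phi(c)=\rho(e^c P^+ + e^{-c}P^-)$ is $c=-v/2$. The fact that the matrix becomes symmetric at some particular $c$ does not, by itself, force that $c$ to be the minimizer of the Perron eigenvalue. The paper closes this gap with a one-line symmetry argument: using $\rho(N)=\rho(N^{\mathsf T})$ and conjugation invariance,
\[
\abs{S}\phi(c)=\rho(e^c M + e^{-c-v} DM^{\mathsf T}D^{-1})
=\rho(e^c M^{\mathsf T} + e^{-c-v} D^{-1}MD)
=\rho(e^{-c-v} M + e^{c} DM^{\mathsf T}D^{-1})
=\abs{S}\phi(-c-v),
\]
so $\phi$ is symmetric about $c=-v/2$; combined with the convexity of $\phi$ (also from Woess), the minimum is at $c=-v/2$. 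You should include this step explicitly. Also, your opening with complex $\theta$ and ``$\rho_Y=\max_\theta \varrho(P(\theta))$'' is a wrong turn: the relevant formula is a \emph{minimum} over \emph{real} tilts, which you do eventually invoke, but the $\max_\theta$ formulation should be dropped.
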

Hence, the result of Theorem~\ref{main_alg} reads $\rho_\Gamma \geq
\rho_Y$, and $\rho_Y$ is easy to compute.

Let us first explain why this random walk is natural, and related to
the random walk on $\Gamma$. Starting from a point $x\in \Gamma$, of
type $j$ and length $n$, the original random walk goes to any of its
successors with probability $1/\abs{S}$. In particular, it reaches
points of type $i$ and length $n+1$ with probability
$M_{ij}/\abs{S}$, just like the probability given
in~\eqref{eq:trans_Y}. On the other hand, it goes to any of its
predecessors with probability $1/\abs{S}$, but the types of these
predecessors depend on $x$, not only on $j$. A random walk which is
simpler to estimate may be constructed by randomizing the
predecessors: from $x$, one chooses to go to any point of length
$n-1$ and type $i$, provided that there is an edge from type $i$ to
type $j$, i.e., $M_{ji}>0$. The probability to go from $x$ to such
points should be given by the average number of predecessors of type
$i$ to a point of type $j$. Writing $s_n(i)=\Card\{x\in \Sbb^n \st
t(x) = i\}$, this quantity is
  \begin{equation}
  \label{eq:rapport_limite}
  \frac{\sum_{\abs{x}=n, t(x)=j} \sum_{\abs{y}=n-1, t(y)=i} 1(x\in \boS(y))}{\sum_{\abs{x}=n, t(x)=j} 1}
  =\frac{ s_{n-1}(i)M_{ji}}{s_n(j)}.
  \end{equation}
As $s_n(i)\sim c' A_i e^{nv}$ for some $c'>0$ (see the proof of
Theorem~\ref{main_alg}), the quantity in~\eqref{eq:rapport_limite}
converges when $n\to\infty$ to $e^{-v} A_i M_{ji}/A_j$, giving the
transition probability~\eqref{eq:trans_Y} in the limit. Note that, in
this randomized random walk, all the points of the same length and
the same type are equivalent. Hence, we may identify them, to get a
smaller space and a simpler random walk. This is precisely our random
walk on $Y$.

Thus, the random walk on $Y$ is obtained by starting from the random
walk on $\Gamma$, randomizing the choice of predecessors, going in
the asymptotic regime $n\to \infty$, and identifying the points on
the sphere that are equivalent. One can define a projection map
$\pi_Y: \Gamma \to Y$ by $\pi_Y(x) = (\abs{x}, t(x))$, under which
the two random walks correspond in a loose sense (going towards
infinity, the transition probabilities are the same, but coming back
towards the identity they differ, just like for the projection
$\pi_X$ in Nagnibeda's construction, with more randomness in $Y$ than
in $\Gamma$).

\begin{proof}[Proof of Theorem~\ref{thm:geometric}]
We define two matrices $P^+$ and $P^-$ giving the transition
probabilities of the walk on $Y$ respectively to the right and to the
left, i.e.,
  \begin{equation*}
  P^+_{ij} = M_{ij}/\abs{S},\quad P^-_{ij}=e^{-v} \frac{A_i M_{ji}}{A_j \abs{S}}.
  \end{equation*}
In other words, $P^+=M/\abs{S}$ and $P^-=e^{-v} D M^*
D^{-1}/\abs{S}$, where $D$ is the diagonal matrix with entries $A_i$
and $M^*$ is the transpose of $M$.

Although this is clear from the geometric construction, let us first
check algebraically that the transition probabilities
in~\eqref{eq:trans_Y} indeed define probabilities, i.e., $\sum_i
P^+_{ij} + P^-_{ij}=1$ for all $j$. Let $p_j$ be the number of
predecessors of a point of type $j$ in $\Gamma$. By definition of
$A$, the matrix $\tilde M_{ji}=M_{ji}/p_j$ satisfies $\tilde M A =
e^v A$. Hence,
  \begin{equation*}
  \sum_i M_{ji} A_i = p_j \sum_i \tilde M_{ji} A_i = p_j e^v A_j.
  \end{equation*}
Therefore,
  \begin{equation*}
  \sum_i M_{ij} + \sum_i e^{-v} \frac{A_i M_{ji}}{A_j}
  = \abs{S}-p_j + e^{-v} \frac{p_j e^v A_j}{A_j} = \abs{S},
  \end{equation*}
proving that~\eqref{eq:trans_Y} defines transition probabilities.

While one can give a pedestrian proof of the equality
$\rho_Y=2e^{-v/2} \lambda / \abs{S}$, it is more efficient to use
available results of the literature. Define a function $\phi$ on $\R$
by $\phi(c)=\rho(e^c P^+ + e^{-c} P^-)$ (where this quantity is the
spectral radius of a bona fide finite dimensional matrix). It is
proved in~\cite[Proposition 8.20 and Theorem 8.23]{woess} that $\phi$
is convex, that it tends to infinity at $\pm\infty$, and that its
minimum is precisely $\rho_Y$. Since the spectral radius is invariant
under transposition and conjugation, we have
  \begin{align*}
  \abs{S}\phi(c) & = \rho( e^c M + e^{-c} e^{-v}DM^* D^{-1})
  =\rho(e^c M^* + e^{-c-v} D^{-1}M D)
  \\&
  =\rho(e^c DM^* D^{-1}+ e^{-c-v} M)
  =\abs{S} \phi(-c-v).
  \end{align*}
Hence, the function $c\mapsto \phi(c)$ is symmetric around $c=-v/2$.
As it is convex, it attains its minimum at $c=-v/2$. Therefore,
  \begin{equation*}
  \rho_Y = \phi(-v/2)
  =\frac{e^{-v/2}}{\abs{S}}\rho(M +D M^* D^{-1})
  =\frac{e^{-v/2}}{\abs{S}}\rho(D^{-1/2}MD^{1/2} +D^{1/2} M^*D^{-1/2}).
  \end{equation*}
By Remark~\ref{rmk:sym}, the last term is equal to $2\lambda$.
\end{proof}

\section{Application to surface groups}

\label{sec:surface}

\subsection{Cannon's types}

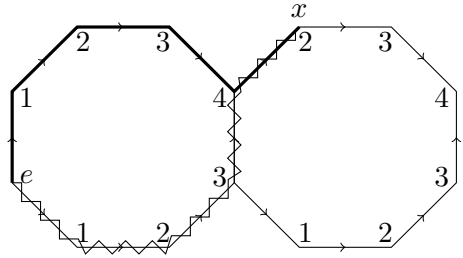
\begin{figure}[htb]
\centering
\begin{tikzpicture}
  \RegPolygonOriented{(0, 0)}{S}{90} {e, 1, 2, 3, 4, 3, 2, 1}{1}{5}{9};
  \RegPolygonOriented{(S5)}  {T}{45} { , 2, 3, 4, 3, 2, 1,  }{1}{4}{8};
  \path (T2) node[above] {$x$};
  \draw[very thick]
      (S1) -- (S2) -- (S3) -- (S4) -- (S5) -- (T2);
  \draw[decorate,decoration=zigzag]
      (S1) -- (S8) -- (S7) -- (S6) -- (S5) -- (T2);
\end{tikzpicture}
\caption{Two examples of geodesics from $e$ to $x$}
\label{fig:geod}
\end{figure}

Consider a countable group with the word distance coming from a
finite generating set $S$. The \emph{cone} of a point $x$ is the set
of points $y$ for which there is a geodesic from $e$ to $y$ going
through $x$. The \emph{cone type} of $x$ is the set $\{x^{-1}y\}$,
for $y$ in the cone of $x$. Note that knowing the cone type of a
point determines the number of its successors, and the number of its
successors having any given cone type. Cannon proved that, in any
hyperbolic group, there are finitely many cone types. Therefore, such
a group admits a type system in the sense of
Definition~\ref{type_system}. This is in particular the case of the
surface groups $\Gamma_g$. However, the number of cone types is too
large, and it is more convenient for practical purposes to reduce
them using symmetries. We obtain Cannon's canonical types for the
surface groups, described in~\cite{cannon_unpublished}
or~\cite{floyd_plotnick} as follows.

The hyperbolic plane can be tessellated by regular $4g$-gons, with
$4g$ of them around each vertex. The Cayley graph of $\Gamma_g$ (with
its usual presentation~\eqref{eq:presentation}) is dual to this
(self-dual) tessellation, and is therefore isomorphic to it. Define
the type of a point $x\in \Gamma_g$ as the maximal length along the
last $4g$-gon of a geodesic starting from $e$ and ending at $x$.
Beware that one really has to take the maximum: for instance, in
Figure~\ref{fig:geod}, the thick geodesic from $e$ to $x$ shares only
one edge with the last octagon, while the wiggly one shares two
edges. Hence, the type of $x$ is $2$.

The type can also be described combinatorially as follows: write
$x=c_1\dotsm c_{\lgth{x}}$ as a product of minimal length in the
generators $a_1,\dotsc, b_g$, look at the length $n$ of its longest
common suffix with a fundamental relator (i.e., a cyclic permutation
of the basic relation $[a_1, b_1]\dotsm [a_g, b_g]$
in~\eqref{eq:presentation} or its inverse: $c_{\lgth{x}-n+1}\dotsm
c_{\lgth{x}}$ should be a subword of the basic relation or its
inverse, up to cyclic permutation), and take the maximum of all such
$n$ over all ways to write $x=c_1\dotsm c_{\lgth{x}}$. It is obvious
that the geometric and combinatorial descriptions are equivalent, we
will mostly rely on the geometric one.

The type of a group element $x$ can be at most $2g$ (otherwise,
taking the same path but going the other way around the last
$4g$-gon, one would get a strictly shorter path, contradicting the
fact that the initial path is geodesic), and it is $0$ only for the
identity. Points $x$ of type $i < 2g$ have only one predecessor, and
$4g-1$ successors. Among them, $2$ are followers of $x$ on the
$4g$-gon to its left and to its right, while the other ones
correspond to newly created $4g$-gons (whose closest point to $e$ is
$x$). It follows that those points have $4g-3$ successors of type
$1$, one of type $2$ and one of type $i+1$. Points of type $2g$ are
special, since they have two predecessors (one can reach them with a
geodesic either from the left or from the right of a single
$4g$-gon). They have $2$ successors of type $2$, corresponding to the
extremal outgoing edges of $x$ (they extend the two $4g$-gons
adjacent to both incoming edges to $x$), and the $4g-4$ remaining
successors are on newly created $4g$-gons, and are of type $1$. See
Figure~\ref{fig:types} for an illustration in genus $2$ (of course,
additional octagons should be drawn around all outgoing edges, but
since this is notoriously difficult to do in a Euclidean drawing, we
have to rely on the reader's imagination).

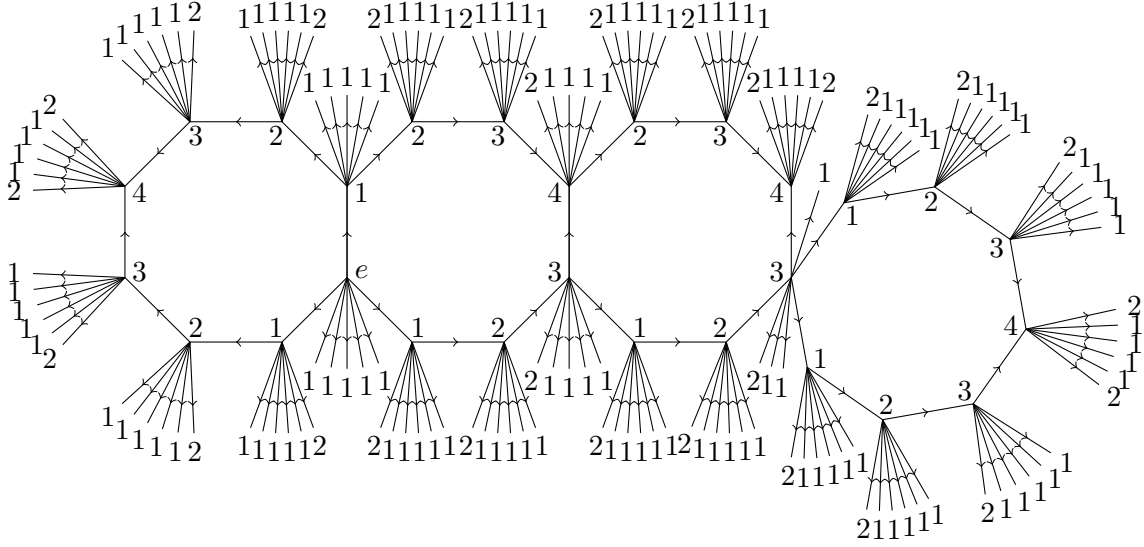
\begin{figure}[htp]
\centering
\begin{tikzpicture}
  \RegPolygonOriented{(0, 0)}{S}{90} {e, 1, 2, 3, 4, 3, 2, 1}{1}{5}{9};
  \RegPolygonOriented{(S5)}  {T}{45} { , 2, 3, 4, 3, 2, 1,  }{1}{4}{8};
  \RegPolygonOriented{(S1)}  {U}{-135}{ , 1, 2, 3, 4, 3, 2, }{1}{5}{8};
  \RegPolygonOriented{(T5)}  {V}{55} { , 1, 2, 3, 4, 3, 2, 1}{1}{5}{9};
  \Faisceau{(S2)}{5}{90}   {40}{1,1,1,1,1};
  \Faisceau{(S3)}{6}{90}   {40}{1,1,1,1,1,2};
  \Faisceau{(S4)}{6}{90}   {40}{1,1,1,1,1,2};
  \Faisceau{(S5)}{5}{90}   {40}{1,1,1,1,2};
  \Faisceau{(T2)}{6}{90}   {40}{1,1,1,1,1,2};
  \Faisceau{(T3)}{6}{90}   {40}{1,1,1,1,1,2};
  \Faisceau{(T4)}{6}{90}   {40}{2,1,1,1,1,2};
  \Faisceau{(T6)}{6}{-92}  {40}{2,1,1,1,1,1};
  \Faisceau{(T7)}{6}{-90}  {40}{2,1,1,1,1,1};
  \Faisceau{(S6)}{5}{-90}  {40}{2,1,1,1,1};
  \Faisceau{(S7)}{6}{-90}  {40}{2,1,1,1,1,1};
  \Faisceau{(S8)}{6}{-90}  {40}{2,1,1,1,1,1};
  \Faisceau{(S1)}{5}{-90}  {40}{1,1,1,1,1};
  \Faisceau{(U2)}{6}{-90}  {40}{1,1,1,1,1,2};
  \Faisceau{(U3)}{6}{247.5}{50}{1,1,1,1,1,2};
  \Faisceau{(U4)}{6}{202.5}{50}{1,1,1,1,1,2};
  \Faisceau{(U5)}{6}{157.5}{50}{2,1,1,1,1,2};
  \Faisceau{(U6)}{6}{112.5}{50}{2,1,1,1,1,1};
  \Faisceau{(U7)}{6}{90}   {40}{2,1,1,1,1,1};
  \Faisceau{(T5)}{1}{(55+90)/2}{0}{1};
  \Faisceau{(T5)}{3}{(55+90)/2+180+5}{15}{2,1,1};
  \Faisceau{(V2)}{6}{55}{40}{1,1,1,1,1,2};
  \Faisceau{(V3)}{6}{55}{40}{1,1,1,1,1,2};
  \Faisceau{(V4)}{6}{32.5} {50}{1,1,1,1,1,2};
  \Faisceau{(V5)}{6}{-12.5} {50}{2,1,1,1,1,2};
  \Faisceau{(V6)}{6}{-57.5}{50}{2,1,1,1,1,1};
  \Faisceau{(V7)}{6}{-80}{40}{2,1,1,1,1,1};
  \Faisceau{(V8)}{6}{-80}{40}{2,1,1,1,1,1};
\end{tikzpicture}
\caption{Types of the points in (part of) the Cayley graph of $\Gamma_2$}
\label{fig:types}
\end{figure}

Keeping only the types from $1$ to $2g$ (since type $0$ only happens
for the identity, while Definition~\ref{type_system} allows to
discard finitely many points), we obtain a type system for $\Gamma_g$
with $T=\{1,\dotsc, 2g\}$, where the matrix $M$ has been described in
the previous paragraph. For instance, in genus $2$,
  \begin{equation*}
  M = \left(\begin{matrix}
  5 & 5 & 5 & 4 \\
  2 & 1 & 1 & 2 \\
  0 & 1 & 0 & 0 \\
  0 & 0 & 1 & 0
  \end{matrix}\right).
  \end{equation*}
One can now apply the algorithm of Theorem~\ref{main_alg} to this
matrix to bound the spectral radius of the simple random walk from
below. All points but points of type $4$ have $1$ predecessor, so the
matrix $\tilde M$ is
  \begin{equation*}
  \tilde M = \left(\begin{matrix}
  5 & 5 & 5 & 4 \\
  2 & 1 & 1 & 2 \\
  0 & 1 & 0 & 0 \\
  0 & 0 & 1/2 & 0
  \end{matrix}\right).
  \end{equation*}
The dominating eigenvalue of this matrix is $e^v = 6.979835\dotsc$
(this is also the growth of the group), while the corresponding
eigenvector is
  \begin{equation*}
  A = (0.715987\dotsc, 0.246211\dotsc, 0.035274\dotsc, 0.002526\dotsc).
  \end{equation*}
One gets that the matrix $M''$ of Remark~\ref{rmk:sym} is
  \begin{equation*}
  M''
  = \left(\begin{matrix}
  5              & 3.171316\dotsc & 0.554905\dotsc & 0.118814\dotsc \\
  3.171316\dotsc & 1              & 1.510223\dotsc & 0.101307\dotsc \\
  0.554905\dotsc & 1.510223\dotsc & 0              & 1.868132\dotsc \\
  0.118814\dotsc & 0.101307\dotsc & 1.868132\dotsc & 0
  \end{matrix}\right),
  \end{equation*}
with dominating eigenvalue $\lambda = 7.000902\dotsc$. Finally,
  \begin{equation}
  \label{eq:lowerrho1}
  \rho \geq \frac{2 e^{-v/2} \lambda}{8} = 0.662477\dotsc.
  \end{equation}
This is already slightly better than Bartholdi's estimate $\rho \geq
0.662421$, but much weaker than the estimate $\rho\geq 0.662772$ that
we claimed in Theorem~\ref{main_thm} (to be compared with the
``true'' value $\rho \sim 0.662812$).

In the next sections, we will explain how to get better estimates by
using different type systems, that distinguish between more points
(but, of course, give rise to larger matrices $M$ and therefore to
more computer-intensive computations).

\begin{rmk}
Using cone types instead of Cannon's canonical types does not give
rise to better estimates for $\rho$ (although the number of types is
much larger). Indeed, if some type system $t'$ is obtained from some
type system $t$ by quotienting by some symmetries of $t$, then the
dominating eigenvector of $\tilde M(t)$, being unique, is invariant
by those symmetries and reduces to the dominating eigenvector of
$\tilde M(t')$. It follows that the dominating eigenvector of
$M''(t)$ is also invariant by those symmetries, and that the
dominating eigenvalue of $M''(t)$ is the same as that of $M''(t')$.
Hence, the estimates on $\rho$ given by Theorem~\ref{main_thm} for
$t$ and $t'$ are the same.
\end{rmk}

\subsection{Suffix types}
\label{subsec_suffix}

There are many ways to define new type systems in surface groups,
that separate more points. If a type system is finer than another
one, then the estimate on the spectral radius coming from
Theorem~\ref{main_alg} is better, but the matrix involved in the
computation is larger. To get manageable estimates, we should find
the right balance.

In this paragraph, we describe a very simple extension of Cannon's
canonical type systems in surface groups, that we call suffix types.
Given a point $x\in \Gamma_g$, there can be several geodesics from
$e$ to $x$. Consider the longest ending that is common to all these
geodesics, say $x_{n-k+1},\dotsc, x_n$ (with $x_n=x$), and define the
suffix type of $x$ to be
  \begin{equation*}
  \tsuffix(x)=(t(x_n), t(x_{n-1}),\dotsc, t(x_{n-k+1})),
  \end{equation*}
where $t$ is the canonical type of Cannon.

For any $x$, $\tsuffix(x)$ is easy to compute inductively:
\begin{itemize}
\item If $t(x)=0$, i.e., $x=e$, then $\tsuffix(x)=(0)$.
\item If $x$ is of type $2g$, it has two predecessors, so the
    common ending to all geodesics ending at $x$ is simply $x$,
    and $\tsuffix(x)=(2g)$.
\item If $t(x) \in \{1,\dotsc, 2g-1\}$, then $x$ has a unique
    predecessor $z$. The common ending to all geodesics ending at
    $x$ is the common ending to all geodesics ending at $z$,
    followed with $x$. Hence, $\tsuffix(x) = (t(x),
    \tsuffix(z))$.
\end{itemize}
It also follows from this description that, if one knows
$\tsuffix(x)$, it is easy to determine $\tsuffix(y)$ for any
successor $y$ of $x$: if $t(y)=2g$, then $\tsuffix(y)=(2g)$,
otherwise $x$ is the only predecessor of $y$ and $\tsuffix(y)=(t(y),
\tsuffix(x))$.

We have shown that $\tsuffix$ shares most properties of type systems
as described in Definition~\ref{type_system}, except that it does not
take its values in a finite set. To ensure this additional property,
one should truncate the suffix type. For instance, one can fix some
maximal length $k$, and define the $k$-truncated suffix type
$\tsuffix^{(k)}(x)$ by keeping only the first $k$ elements of
$\tsuffix(x)$ if its length is $>k$.

The following proposition is obvious from the previous discussion.
\begin{prop}
For any $k\geq 1$, the $k$-truncated suffix type system
$\tsuffix^{(k)}$ is a (Perron-Frobenius) type system in the sense of
Definition~\ref{type_system}.
\end{prop}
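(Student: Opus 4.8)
The plan is to verify the three defining requirements of a type system (Definition~\ref{type_system}) for $\tsuffix^{(k)}$: that the set of types is finite, that the number of successors of each type depends only on the type of the source point (up to finitely many exceptions), and that the resulting matrix is Perron-Frobenius. The preceding discussion has essentially done all the work; the proof is just a matter of assembling it.

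First I would address finiteness of the type set $T$. The canonical type $t$ takes values in $\{0,1,\dots,2g\}$, so a $k$-truncated suffix type is a tuple of length at most $k$ with entries in this finite set; hence $T$ is finite. (One may further restrict to the suffix types that are actually realized, but this is not needed.)

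Next I would establish the matrix $M$. The key point, already spelled out in the inductive description, is that if $y$ is a successor of $x$ then $\tsuffix(y)$ is determined by $t(y)$ together with $\tsuffix(x)$: either $t(y)=2g$ and $\tsuffix(y)=(2g)$, or else $x$ is the unique predecessor of $y$ and $\tsuffix(y)=(t(y),\tsuffix(x))$. Truncating to length $k$ is a function of the untruncated data, so the same holds for $\tsuffix^{(k)}$. Combined with the fact (from the Cannon type system) that the number of successors of $x$ of each canonical type depends only on $t(x)$ for all but the identity, this shows that the number of successors $y$ of $x$ with $\tsuffix^{(k)}(y)=i$ depends only on $\tsuffix^{(k)}(x)$, for all $x\neq e$. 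Discarding the single point $e$ is permitted by Definition~\ref{type_system}. This defines the matrix $M$.

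Finally I would check the Perron-Frobenius property. Since the Cayley graph of $\Gamma_g$ is connected and has exponential growth with spheres nonempty for all $n$, and since every point of type in $\{1,\dots,2g-1\}$ has a type-$1$ successor while type-$1$ points reappear arbitrarily far out, one can connect any two suffix types by a directed path in the type graph: from any type one can descend (as a successor) to a point whose suffix type begins with $1$ and then build up any prescribed suffix of length $\le k$ by following the appropriate canonical types. Hence some power of $M$ is strictly positive. The main (and only) obstacle is bookkeeping: making the "build up any prescribed suffix" step precise requires noting that the canonical type system itself is Perron-Frobenius (visible from its matrix $M$ in genus $2$, and analogously in higher genus), so every finite word of canonical types that can occur as a suffix is realized infinitely often, which feeds the truncated-suffix transitions. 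All of this is routine given the explicit inductive rules above, so the proposition is indeed "obvious from the previous discussion."
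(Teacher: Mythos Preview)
Your proposal is correct and aligns with the paper's own treatment: the paper offers no separate proof, stating only that the proposition ``is obvious from the previous discussion,'' and your argument is precisely an explicit assembly of that discussion (finiteness of the truncated tuples, the inductive rule $\tsuffix(y)=(t(y),\tsuffix(x))$ or $(2g)$, and irreducibility/aperiodicity inherited from the Cannon matrix via the self-loop at type~$1$). There is nothing to add.
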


The matrix size increases with $k$, but the estimates on the spectral
radius following from Theorem~\ref{main_alg} get better. For
instance, in $\Gamma_2$, for $k=5$, the matrix size is $148$, and we
get $\rho \geq 0.662694$.

A drawback of this truncation process is that it truncates uniformly,
independently of the likeliness of the type, while it should be more
efficient to extend mostly those types that are more likely to
happen. This intuition leads to another truncation process: fix a
system of weights $w=(w_0,\dotsc, w_{2g}) \in [0,+\infty)^{2g+1}$, a
threshold $k$, and truncate a suffix type $(t_0,t_1,\dotsc)$ at the
smallest $n$ such that $t_0+\dotsc+t_n > k$. This gives another type
system denoted by $\tsuffix^{(k,w)}$ ($\tsuffix^{(k)}$ corresponds to
the weights $w=(1,\dotsc, 1)$ and the threshold $k-1$). Define for
instance a weight system $\bar w$ by $\bar w_0=1$ and $\bar w_i=i$
for $i\geq 1$: the corresponding type system $\tsuffix^{(k, \bar w)}$
truncates more quickly the suffix types involving a lot of large
types, that happen less often in the group. Hence, it should give a
smaller matrix than the naive truncation only according to length,
while retaining a comparatively good estimate for the spectral
radius.

This intuition is correct: for instance, in $\Gamma_2$, using
$\tsuffix^{(k, \bar w)}$ with $k=6$, one gets a matrix with size
$109$ and an estimate $\rho\geq 0.662697$: the matrix is smaller than
for the naive truncation $\tsuffix^{(5)}$, while the estimate on the
spectral radius is better.

We can now push the computations, to a larger matrix size: using in
$\Gamma_2$ the weight $\bar w$ and the truncation threshold $k=25$,
one obtains a type system where the matrix is of size $2,774,629$,
and the following estimate on the spectral radius.
\begin{prop}
In $\Gamma_2$, one has $\rho \geq 0.662757$.
\end{prop}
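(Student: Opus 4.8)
The statement to prove is: in $\Gamma_2$, one has $\rho \geq 0.662757$. Following the pattern already established in the paper, the plan is to apply Theorem~\ref{main_alg} to the type system $\tsuffix^{(25, \bar w)}$, i.e.\ to the suffix type system truncated using the weights $\bar w$ (with $\bar w_0 = 1$, $\bar w_i = i$ for $i \geq 1$) at threshold $k = 25$. By the proposition preceding Subsection~\ref{subsec_suffix}, every truncated suffix type system is a Perron--Frobenius type system in the sense of Definition~\ref{type_system}, so Theorem~\ref{main_alg} applies verbatim once the relevant matrix has been assembled.

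The first step is to enumerate the finite set $T = T^{(25,\bar w)}$ of truncated suffix types. Starting from $(0)$, one generates children according to the inductive rule described in Subsection~\ref{subsec_suffix}: the successors of a point of suffix type $(t_0, t_1, \dotsc)$ are obtained by prepending each possible successor type $t'$ (with the multiplicities coming from Cannon's matrix $M$: namely $4g-3 = 5$ successors of type $1$, one of type $2$, and one of type $t_0+1$ when $t_0 < 2g$, and $4g-4 = 4$ of type $1$ together with two of type $2$ when $t_0 = 2g = 4$), truncating a resulting string $(t_0', t_0, t_1, \dotsc)$ at the first index $n$ with $t_0' + t_0 + \dotsb + t_n > 25$, and collapsing to $(2g)$ whenever the new head is $2g$. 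Iterating this until no new type appears yields the announced size $2{,}774{,}629$, together with the non-negative integer matrix $M$ recording $M_{ij} = \Card\{y \in \boS(x) : t(y) = i\}$ for $x$ of type $j$, and the predecessor counts $p_j = \abs{S} - \sum_i M_{ij}$ (here $\abs{S} = 8$), which are $2$ if the type is $(2g)$ and $1$ otherwise.

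The second step is the linear-algebra computation prescribed by Theorem~\ref{main_alg}: form $\tilde M_{ij} = M_{ij}/p_i$, compute its dominating eigenvalue $e^v$ and a positive eigenvector $(A_1, \dotsc, A_k)$ (the Perron--Frobenius property guarantees simplicity and positivity), set $D = \operatorname{diag}(A_i)$, $M' = D^{-1/2} M D^{1/2}$, and, by Remark~\ref{rmk:sym}, compute $\lambda$ as the largest eigenvalue of the symmetric matrix $M'' = (M' + {M'}^*)/2$. Plugging into \eqref{eq_main_rho} gives $\rho \geq 2 e^{-v/2} \lambda / 8$, and one checks numerically that this quantity exceeds $0.662757$.

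The main obstacle is entirely computational rather than conceptual: the matrix has nearly three million rows and columns, so both the enumeration of types and the two dominant-eigenvalue computations (one for $\tilde M$, one for the symmetric $M''$) must be carried out carefully. The matrix is extremely sparse — each type has at most $2g+1 = 5$ distinct successor types — so sparse storage and a power-iteration (or Lanczos, for the symmetric $M''$) method are appropriate; the Perron--Frobenius gap ensures rapid convergence of the power iteration for $\tilde M$. To make the final inequality rigorous despite floating-point arithmetic, the eigenvalue estimates should be certified, e.g.\ by interval arithmetic or by exhibiting explicit sub-/super-solutions: for a non-negative matrix, once an approximate positive eigenvector $A$ and value $e^v$ are in hand, the bounds $\min_i (\tilde M A)_i / A_i \leq e^v \leq \max_i (\tilde M A)_i / A_i$ give a guaranteed enclosure, and similarly a Rayleigh quotient $\langle M'' q, q\rangle / \langle q, q\rangle$ for any explicit $q$ is a rigorous lower bound for $\lambda$, hence for $\rho$. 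Choosing $q$ to be the numerically computed top eigenvector of $M''$ then yields the claimed bound $\rho \geq 0.662757$.
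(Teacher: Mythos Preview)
Your proposal is correct and follows exactly the paper's approach: apply Theorem~\ref{main_alg} to the truncated suffix type system $\tsuffix^{(25,\bar w)}$ in $\Gamma_2$, producing a sparse matrix of size $2{,}774{,}629$, and read off the bound from~\eqref{eq_main_rho}. One efficiency refinement the paper exploits that you may want to incorporate: rather than obtaining the Perron eigenvector $A$ of $\tilde M$ by power iteration on the large matrix, one can compute it directly from the small Cannon data via~\eqref{eq:bijection}, namely $A_{(i_0,\dotsc,i_m)} = M^{(0)}_{i_0 i_1}\dotsm M^{(0)}_{i_{m-1}i_m}\, A^{(0)}_{i_m}\, e^{-mv}$, so that the only large-scale eigenvalue computation is the power iteration on $M''$ (which, as you note, yields a rigorous monotone lower bound for $\lambda$).
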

This is definitely better than~\eqref{eq:lowerrho1}, but not yet as
good as Theorem~\ref{main_thm}.

\bigskip

A few comments on the practical implementation. There are three main
steps in the algorithm of Theorem~\ref{main_alg}:
\begin{enumerate}
\item Compute the matrix $M$ corresponding to the type system.
\item Find the eigenvector $A$, to define the matrix $M'$.
\item Find the maximal expansion rate of $M'$.
\end{enumerate}
Computing the matrix of the type system is a matter of simple
combinatorics: we explained above all the transitions from one suffix
type to the next ones. The resulting matrix $M$ is very sparse: each
type has at most $2g$ successors. However, it is extremely large, so
that finding the eigenvector $A$ and then the maximal expansion rate
of $M'$ might seem computationally expensive. This is not the case,
as we explain now.

Let $A^{(0)}$ be the eigenvector for the original Cannon type, so
that $\Card\{x\in \Sbb^n \st t(x)=i\} \sim  A^{(0)}_i e^{nv}$. Let
also $M^{(0)}$ be the matrix for the original Cannon type. Given a
new type $\bar i = (i_0,\dotsc, i_m)$, the entry $A_{\bar i}$ of the
eigenvector $A$ for the new type $\tsuffix^{(w,k)}$ is such that
$\Card\{x\in \Sbb^n \st \tsuffix^{(w,k)}(x)=\bar i\} \sim A_{\bar i}
e^{nv}$. Such a point $x$ can be obtained uniquely by starting from a
point $y\in \Sbb^{n-m}$ with type $i_m$, and then taking successors
respectively of type $i_{m-1},\dotsc, i_0$. Hence,
  \begin{multline}
  \label{eq:bijection}
  \Card\{x\in \Sbb^n \st \tsuffix^{(w,k)}(x)=(i_0,\dotsc, i_m)\}
  \\ = M^{(0)}_{i_0 i_1}\dotsm M^{(0)}_{i_{m-1}i_m}\Card\{y\in \Sbb^{n-m} \st t(y)=i_m\}.
  \end{multline}
It follows that the new eigenvector is given by
  \begin{equation*}
  A_{\bar i} = M^{(0)}_{i_0 i_1}\dotsm M^{(0)}_{i_{m-1}i_m} A^{(0)}_{i_m} e^{-mv}.
  \end{equation*}
This shows that $A$ is very easy to compute.

By Remark~\ref{rmk:sym}, to determine the maximal expansion rate
$\lambda$ of $M'$, it suffices the find the maximal eigenvalue of
$M''=(M'+{M'}^*)/2$. This matrix is real, symmetric, with nonnegative
coefficients, and it is Perron-Frobenius (i.e., it has one single
maximal eigenvalue). It follows that, for any vector $v$ with
positive coefficients, $\lambda = \lim \norm{{M''}^n
v}/\norm{{M''}^{n-1}v}$ (and moreover this sequence is
non-decreasing, see for instance~\cite[Corollary 10.2]{woess}).
Hence, one can readily estimate $\lambda$ from below, by starting
from a fixed vector $v$ and iterating $M''$. Again, there is no issue
of instability or complexity.

\subsection{Essential types}

To improve the suffix types, to separate even more points, one can
for instance replace the canonical Cannon types with the true cone
types. However, the matrix size increases so quickly that this is not
usable in practice. Moreover, this does not solve the main problem of
suffix types: they do not separate at all points with Cannon type
$2g$, although such points are clearly not always equivalent. In this
paragraph, we introduce a new type system that can separate such
points, that we call the \emph{essential type}.

The basic idea (that will not work directly) is to memorize not only
the common ending of all geodesics ending at a point $x$, but all the
parts that are common to such geodesics: i.e., the sequence
$F_{\mathrm{ess}}(x)=(x_0=e, x_1,\dotsc, x_n=x)$ (with $n=\lgth{x}$)
where $x_i=*$ if there are two geodesics from $e$ to $x$ that differ
at position $i$, and $x_i$ is the point that is common to all those
geodesics at position $i$ otherwise. We then associate to $x$ the
sequence $\tessential(x)=(t(x_n), t(x_{n-1}),\dotsc, t(x_0))$ where
$t(x_i)$ is the Cannon type of $x_i$ if $x_i\not=*$, and $t(*)=*$.

\begin{figure}[htb]
\centering
\begin{tikzpicture}
  \RegPolygonOriented{(0, 0)}{S}{90} {e, 1, 2, 3, 4, 3, 2, 1}{1}{5}{9};
  \RegPolygonOriented{(S4)}  {T}{120}{ , 1, 2, 3, 4, 3, 2, 1}{1}{5}{9};
  \Faisceau{(S4)}{1}{-30}{0}{1};
  \Faisceau{(S4)}{3}{150}{30}{1,1,2};
  \RegPolygonOriented{(6, 0)}{U}{90} {e, 1, 2, 3, 4, 3, 2, 1}{1}{5}{9};
  \RegPolygonOriented{(U4)}  {V}{90} { , 1, 2, 3, 4, 3, 2,  }{1}{5}{8};
  \path (T4) node[above] {$x$}
        (T5) node[above] {$y$}
        (V4) node[above] {$x'$}
        (V5) node[above right] {$y'$};
  \draw[very thick]
      (S1) -- (S2) -- (S3) -- (S4) -- (T2) -- (T3) -- (T4) -- (T5);
  \draw[decorate,decoration=zigzag]
      (S1) -- (S2) -- (S3) -- (S4) -- (T8) -- (T7) -- (T6) -- (T5);
  \draw[very thick]
      (U1) -- (U2) -- (U3) -- (U4) -- (V2) -- (V3) -- (V4) -- (V5);
  \draw[decorate,decoration=zigzag]
      (U1) -- (U8) -- (U7) -- (U6) -- (U5) -- (V7) -- (V6) -- (V5);
\end{tikzpicture}
\caption{The points $x$ and $x'$ have the same essential type, contrary to $y$ and $y'$.}
\label{fig:tessentialnotunique}
\end{figure}
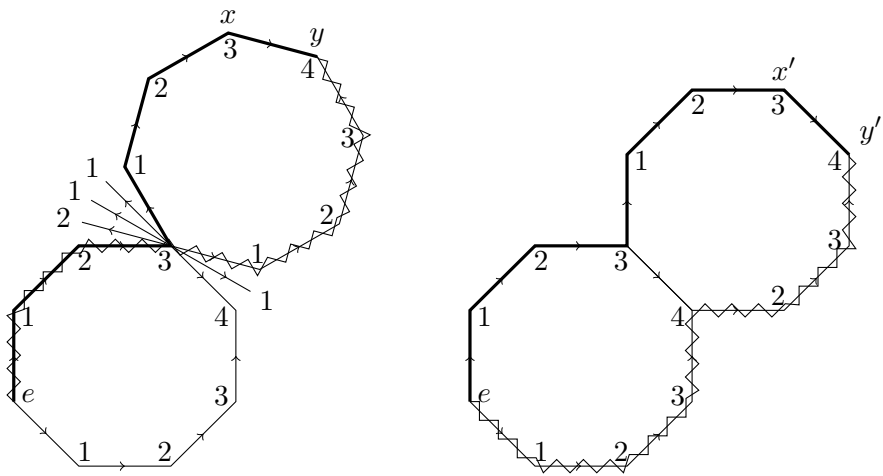

The problem with this notion is that $\tessential(x)$ does not
determine $\tessential(y)$ for $y$ a successor of $x$: in
Figure~\ref{fig:tessentialnotunique}, the points $x$ and $x'$ have
the same essential type $(3,2,1,3,2,1,0)$, while their successors $y$
and $y'$ have respective essential types $(4,*,*,*,3,2,1,0)$ and
$(4,*,*,*,*,*,*,0)$ (this follows from the fact that the thick paths
and wiggly paths are geodesics). This shows that, as we have defined
it, $\tessential$ can not be used to define a type system.

This problem can be solved if we do not use directly the Cannon types
in the definition of $\tessential$, but a slightly refined notion,
the Cannon modified type, taking values in $\{1,\dotsc, 2g, 1',
2'\}$. The modified type of a point is the same as its Cannon type,
except for some points of Cannon types $1$ and $2$, that have
modified types $1'$ and $2'$ respectively: considering any point $y$
of type $2g-1$, it has a unique successor $z$ of type $2g$, and a
unique successor $x$ of type $1$ that is on the same $4g$-gon as $z$.
We say that $x$ is of modified type $1'$. Moreover, $x$ has a unique
successor of type $2$ that is also on the same $4g$-gon as $z$, we
say that it is of modified type $2'$. See
Figure~\ref{fig:types_prime} for an example in genus $2$. By
definition, the modified Cannon type $t'$ is also a type system. The
transition matrix is the same as for the usual Cannon type, except
that
\begin{itemize}
\item A point of type $2g-1$ has one successor of type $1'$, one
    successor of type $2g$, one successor of type $2$, and $4g-4$
    successors of type $1$.
\item A point of type $1'$ has one successor of type $2$, one
    successor of type $2'$, and $4g-3$ successors of type $1$.
\item A point of type $2'$ has one successor of type $2$, one
    successor of type $3$, and $4g-3$ successors of type $1$.
\end{itemize}

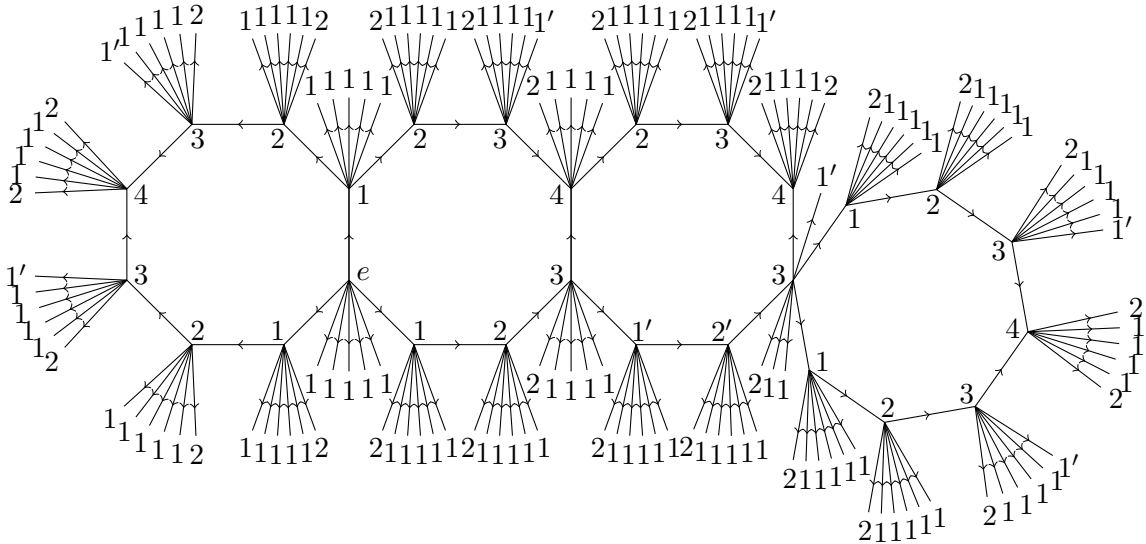
\begin{figure}[htp]
\centering
\begin{tikzpicture}
  \RegPolygonOriented{(0, 0)}{S}{90} {e, 1, 2, 3, 4, 3, 2, 1}{1}{5}{9};
  \RegPolygonOriented{(S5)}  {T}{45} { , 2, 3, 4, 3, 2', 1',  }{1}{4}{8};
  \RegPolygonOriented{(S1)}  {U}{-135}{ , 1, 2, 3, 4, 3, 2, }{1}{5}{8};
  \RegPolygonOriented{(T5)}  {V}{55} { , 1, 2, 3, 4, 3, 2, 1}{1}{5}{9};
  \Faisceau{(S2)}{5}{90}   {40}{1,1,1,1,1};
  \Faisceau{(S3)}{6}{90}   {40}{1,1,1,1,1,2};
  \Faisceau{(S4)}{6}{90}   {40}{1',1,1,1,1,2};
  \Faisceau{(S5)}{5}{90}   {40}{1,1,1,1,2};
  \Faisceau{(T2)}{6}{90}   {40}{1,1,1,1,1,2};
  \Faisceau{(T3)}{6}{90}   {40}{1',1,1,1,1,2};
  \Faisceau{(T4)}{6}{90}   {40}{2,1,1,1,1,2};
  \Faisceau{(T6)}{6}{-92}  {40}{2,1,1,1,1,1};
  \Faisceau{(T7)}{6}{-90}  {40}{2,1,1,1,1,1};
  \Faisceau{(S6)}{5}{-90}  {40}{2,1,1,1,1};
  \Faisceau{(S7)}{6}{-90}  {40}{2,1,1,1,1,1};
  \Faisceau{(S8)}{6}{-90}  {40}{2,1,1,1,1,1};
  \Faisceau{(S1)}{5}{-90}  {40}{1,1,1,1,1};
  \Faisceau{(U2)}{6}{-90}  {40}{1,1,1,1,1,2};
  \Faisceau{(U3)}{6}{247.5}{50}{1,1,1,1,1,2};
  \Faisceau{(U4)}{6}{202.5}{50}{1',1,1,1,1,2};
  \Faisceau{(U5)}{6}{157.5}{50}{2,1,1,1,1,2};
  \Faisceau{(U6)}{6}{112.5}{50}{2,1,1,1,1,1'};
  \Faisceau{(U7)}{6}{90}   {40}{2,1,1,1,1,1};
  \Faisceau{(T5)}{1}{(55+90)/2}{0}{1'};
  \Faisceau{(T5)}{3}{(55+90)/2+180+5}{15}{2,1,1};
  \Faisceau{(V2)}{6}{55}{40}{1,1,1,1,1,2};
  \Faisceau{(V3)}{6}{55}{40}{1,1,1,1,1,2};
  \Faisceau{(V4)}{6}{32.5} {50}{1',1,1,1,1,2};
  \Faisceau{(V5)}{6}{-12.5} {50}{2,1,1,1,1,2};
  \Faisceau{(V6)}{6}{-57.5}{50}{2,1,1,1,1,1'};
  \Faisceau{(V7)}{6}{-80}{40}{2,1,1,1,1,1};
  \Faisceau{(V8)}{6}{-80}{40}{2,1,1,1,1,1};
\end{tikzpicture}
\caption{Modified Cannon types of the points in (part of) the Cayley graph of $\Gamma_2$}
\label{fig:types_prime}
\end{figure}

We define $\tessential(x)=(t'(x_n),\dotsc, t'(x_0))$ where
$(x_0,\dotsc, x_n)=F_{\mathrm{ess}}(x)$ and $t'(*)=*$.

\begin{prop}
\label{prop:tessential_type}
The essential type $\tessential(x)$ of a point $x$ determines the
essential type of its successors.
\end{prop}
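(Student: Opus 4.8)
The plan is to track, along each successor of $x$, the common skeleton of all geodesics from $e$, decorated with modified Cannon types. Write $F_{\mathrm{ess}}(x)=(x_0,\dotsc,x_n)$ with $n=\lgth x$, so $\tessential(x)=(t'(x_n),\dotsc,t'(x_0))$. Since $t'$ is a type system, the number of successors of $x$ of each modified type is already fixed by $t'(x)$, the first letter of $\tessential(x)$; so it is enough to determine $\tessential(y)$ from $\tessential(x)$ and $t'(y)$ for each successor $y$. If $t(y)\neq 2g$, then $y$ has Cannon type $<2g$, hence a single predecessor, which must be $x$; so every geodesic from $e$ to $y$ is a geodesic from $e$ to $x$ extended by the edge $xy$, whence $F_{\mathrm{ess}}(y)=(x_0,\dotsc,x_n,y)$ and $\tessential(y)$ is $\tessential(x)$ with $t'(y)$ prepended.

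All the content is in the case $t(y)=2g$. By the transition rules only Cannon-type-$(2g-1)$ points have a Cannon-type-$2g$ successor, and exactly one; so here $t'(x)=2g-1$, $\tessential(x)$ begins with $2g-1$, and $y$ is that successor. Now $y$ has a second predecessor $z'$, and $x,y,z'$ all lie on one $4g$-gon $P$: the point $y$ is the vertex of $P$ farthest from $e$, namely $2g$ steps around $P$ from the vertex $p_0$ of $P$ closest to $e$, and $x,z'$ are the two neighbours of $y$ on $P$. Walking backwards from $x$ along $P$ produces vertices $q_{2g-1}=x,q_{2g-2},\dotsc,q_1,q_0=p_0$; an easy induction on the transition rules gives $t(q_k)=k$ for $2\le k\le 2g-1$, so each of $q_{2g-1},\dotsc,q_2$ has Cannon type $<2g$, hence a unique predecessor, which is the previous vertex along $P$. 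Therefore $q_{2g-1},\dotsc,q_1$ are the first $2g-1$ vertices $x_n,\dotsc,x_{n-2g+2}$ of $F_{\mathrm{ess}}(x)$, so the first $2g-1$ letters of $\tessential(x)$ are $t'(q_{2g-1}),\dotsc,t'(q_1)$. The analogous walk from $z'$ gives $q'_{2g-1}=z',\dotsc,q'_1,q'_0=p_0$.

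The crux is that $\tessential(x)$ determines $\tessential(z')$ together with the set of distances at which $F_{\mathrm{ess}}(x)$ and $F_{\mathrm{ess}}(z')$ coincide. Here $q_1$ and $q'_1$ are the two neighbours of $p_0$ on $P$, and one runs through the finitely many ways $P$ can sit among the $4g$-gons incident to $p_0$. In the generic position $q_1$ and $q'_1$ are ordinary Cannon-type-$1$ successors of $p_0$: then all geodesics to $x$, and to $z'$, pass through $p_0$, so below $p_0$ both skeletons equal $F_{\mathrm{ess}}(p_0)$, while above $p_0$ they run along the two opposite arcs of $P$ and disagree at every distance; here $\tessential(z')=\tessential(x)$, the two skeletons agree exactly at distances $\le\lgth{p_0}$, and $\tessential(y)$ is $2g$, followed by $2g-1$ stars, followed by $\tessential(p_0)$ (which is $\tessential(x)$ with its first $2g-1$ letters removed). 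In the remaining ``apex'' position $P$ is the $4g$-gon adjacent to the Cannon-type-realising $4g$-gon $A$ of $p_0$, which forces $t(p_0)=2g-1$; one of $q_1,q'_1$ is the apex of $A$ (Cannon type $2g$, having $p_0$ as one of its two predecessors) and the other is the distinguished type-$1$ successor of $p_0$, so on one side the geodesics diverge just above $p_0$ and reconverge only at the vertex $a_0$ of $A$ closest to $e$; one checks the two skeletons agree exactly at distances $\le\lgth{a_0}$, and reads off $\tessential(z')$ and $\tessential(y)$ (which is now $2g$, then $4g-2$ stars, then $\tessential(a_0)$). The essential observation — and the reason the modified Cannon type is introduced — is that the generic and apex positions are distinguished inside $\tessential(x)$ exactly by whether the relevant letters are $1,2$ or $1',2'$: this is precisely the distinction the unmodified Cannon type fails to record, as shown by the points $x,x'$ in the example preceding the statement. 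Granting all this, $\tessential(y)$ is $t'(y)=2g$ prepended to the common refinement of $F_{\mathrm{ess}}(x)$ and $F_{\mathrm{ess}}(z')$ (the $i$-th vertex being their shared value when both are defined and agree there, and $*$ otherwise), so it is a function of $\tessential(x)$ alone.

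The main obstacle is this last point: organising the case analysis of the (few) ways $P$ can be attached at $p_0$, and verifying in each case that $t'$ retains exactly the information needed to make $F_{\mathrm{ess}}(z')$ — and its overlap with $F_{\mathrm{ess}}(x)$ — depend on $\tessential(x)$ only; everything else is routine bookkeeping about unique predecessors of points of Cannon type $<2g$.
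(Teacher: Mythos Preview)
Your argument has the right architecture --- the easy case $t(y)\neq 2g$, the identification of the $4g$-gon $P$ with base $p_0$, and the recognition that the modified types $1',2'$ are exactly what flag the ``apex'' configuration --- but the apex case does not close as written, and the missing ingredient is induction on $\lgth{x}$.

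Concretely, you assert that in the apex case $\tessential(y)=(2g,*^{4g-2},\tessential(a_0))$, where $a_0$ is the base of the $4g$-gon $A$ whose apex is the type-$2g$ point among $q_1,q'_1$. This is correct only when the apex configuration does not recur at $a_0$. If $t(a_0)=2g-1$ and the neighbour of $a_0$ on $A$ (on the $p_0$-side, say) again has Cannon type $2g$, then geodesics to $y$ diverge once more below $a_0$: not all of them pass through $a_0$, so $F_{\mathrm{ess}}(y)[\lgth{a_0}]=*$ and there are at least $6g-3$ stars, not $4g-2$. This chaining of $4g$-gons can repeat arbitrarily many times, so no fixed formula of the shape you propose can work. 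The paper avoids this by arguing by induction on $\lgth{x}$: it writes $\tessential(y)=(2g,*^{2g-1},\widehat\tessential(\tilde z_1))$ with only $2g-1$ stars, and then observes that $\tilde z_1$ is a successor of $z_0=p_0$, so the induction hypothesis (applied to $z_0$, which is shorter than $x$) gives $\tessential(\tilde z_1)$ as a function of $\tessential(z_0)$, itself a suffix of $\tessential(x)$. This absorbs the unbounded recursion.

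Two smaller points. First, your dichotomy ``relevant letters are $1,2$ versus $1',2'$'' omits a third possibility: when it is $q_1$ (on the $x$-side) that has Cannon type $2g$, the $(2g-1)$-th letter of $\tessential(x)$ is $t'(z_1)=2g$, not $1'$. The paper treats this as a separate case; there $\tessential(z_1)$ is already a suffix of $\tessential(x)$ and no appeal to the induction hypothesis is needed. Second, the claim $\tessential(z')=\tessential(x)$ in the generic case is not literally true (the letters $t'(q_1)$ and $t'(q'_1)$ need not coincide); this does not affect your conclusion for $\tessential(y)$, but it shows that the ``common refinement of $F_{\mathrm{ess}}(x)$ and $F_{\mathrm{ess}}(z')$'' bookkeeping is more delicate than you suggest.
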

\begin{proof}
We argue by induction on $\lgth{x}=d(x,e)$.

Consider a point $x$, and one of its successors $y$. If the type of
$y$ is not $2g$, then $x$ is the unique predecessor of $y$, and
$\tessential(y)=(t'(y), \tessential(x))$. Assume now that $t'(y)=2g$
(so that $t'(x)=2g-1$). Let $z_{2g-1}=x$, and define inductively
$z_i$ as the unique predecessor of $z_{i+1}$ for $i\geq 1$, so that
$F(x)=(e,\dotsc, z_1,z_2,\dotsc, z_{2g-1}=x)$. Those points are on a
common $4g$-gon $R$, and $t(z_i)=i$ for $i\geq 2$, while $t(z_1)$ can
be anything. In the same way, let $\tilde z_{2g-1},\dotsc, \tilde
z_1$ be the successive preimages of $y$ going around $R$ in the other
direction. They also satisfy $t(\tilde z_i)=i$ for $i\geq 2$.

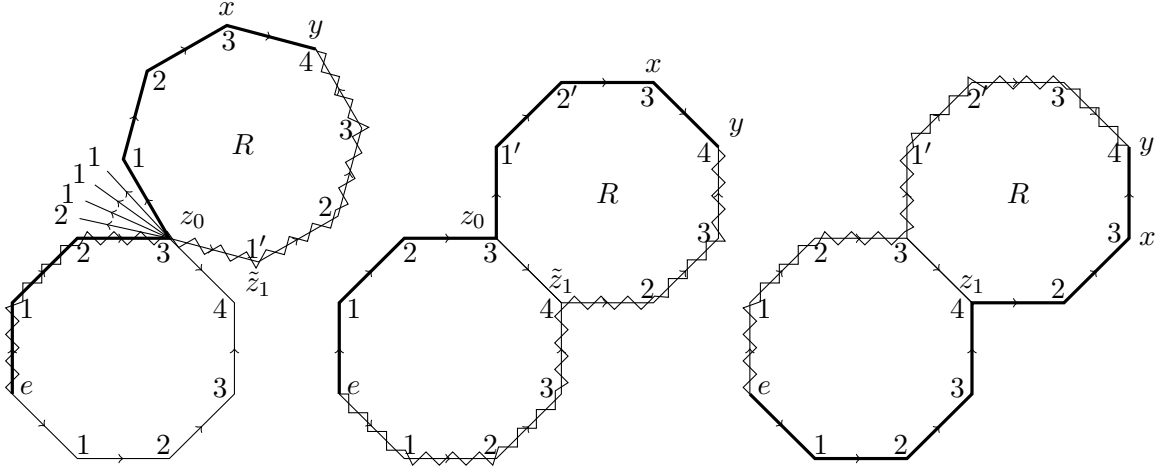
\begin{figure}[htb]
\centering
\begin{tikzpicture}
  \RegPolygonOriented{(0, 0)}{S}{90} {e, 1, 2, 3, 4, 3, 2, 1}{1}{5}{9};
  \RegPolygonOriented{(S4)}  {T}{120}{ , 1, 2, 3, 4, 3, 2, 1'}{1}{5}{9};
  \Faisceau{(S4)}{4}{150}{35}{1,1,1,2};
  \RegPolygonOriented{(4.3, 0)}{U}{90} {e, 1 , 2 , 3, 4, 3, 2, 1}{1}{5}{9};
  \RegPolygonOriented{(U4)}    {V}{90} { , 1', 2', 3, 4, 3, 2,  }{1}{5}{8};
  \RegPolygonOriented{(9.7, 0)}{W}{90} {e, 1 , 2 , 3, 4, 3, 2, 1}{1}{5}{9};
  \RegPolygonOriented{(W4)}    {X}{90} { , 1', 2', 3, 4, 3, 2,  }{1}{5}{8};
  \path (S4) node[above right] {$z_0$}
        (T8) node[below] {$\tilde z_1$}
        (T4) node[above] {$x$}
        (T5) node[above] {$y$}
        (U4) node[above left] {$z_0$}
        (V4) node[above] {$x$}
        (V5) node[above right] {$y$}
        (V8) node[above] {$\tilde z_1$}
        (W5) node[above] {$z_1$}
        (X6) node[right] {$x$}
        (X5) node[right] {$y$}
        (T0) node {$R$}
        (V0) node {$R$}
        (X0) node {$R$};
  \draw[very thick]
      (S1) -- (S2) -- (S3) -- (S4) -- (T2) -- (T3) -- (T4) -- (T5);
  \draw[decorate,decoration=zigzag]
      (S1) -- (S2) -- (S3) -- (S4) -- (T8) -- (T7) -- (T6) -- (T5);
  \draw[very thick]
      (U1) -- (U2) -- (U3) -- (U4) -- (V2) -- (V3) -- (V4) -- (V5);
  \draw[decorate,decoration=zigzag]
      (U1) -- (U8) -- (U7) -- (U6) -- (U5) -- (V7) -- (V6) -- (V5);
  \draw[decorate,decoration=zigzag]
      (W1) -- (W2) -- (W3) -- (W4) -- (X2) -- (X3) -- (X4) -- (X5);
  \draw[very thick]
      (W1) -- (W8) -- (W7) -- (W6) -- (W5) -- (X7) -- (X6) -- (X5);
\end{tikzpicture}
\caption{Determining the essential type of $y$ from that of $x$}
\label{fig:tessentialunique}
\end{figure}

If $t(z_1)\not =2g$, then $z_1$ has a unique preimage $z_0$, which
also belongs to $R$. Moreover, $z_0$ is the unique closest point to
$e$ on $R$. The path $P=(z_0,z_1,\dotsc, z_ {2g-1}, y)$ is a geodesic
path going around $R$ in one direction, and $\tilde P=(z_0, \tilde
z_1,\dotsc, \tilde z_{2g-1}, y)$ is also geodesic and goes around $R$
in the other direction. If, along $\tilde P$, all points different
from $z_0,y$ have type $<2g$ (so that they have a unique preimage),
then any geodesic from $e$ to $y$ has to follow either $P$ or $\tilde
P$, so that $\tessential(y) = (2g, *,\dotsc,*, \tessential(z_0))$,
with $2g-1$ ambiguous points. See the first part of
Figure~\ref{fig:tessentialunique} for an illustration.

The only way to have a point of type $2g$ along $\tilde P$ is if
$t(z_0)=2g-1$ and $t(\tilde z_1)=2g$, since $t(\tilde z_i)=i$ for
$i\geq 2$ (see the second part of Figure~\ref{fig:tessentialunique}).
By definition of the modified type, this happens exactly when
$t'(z_1)=1'$ and $t'(z_2)=2'$. In this case, a geodesic from $e$ to
$y$ can either go through $z_0$ and then follow $P$, or go through
$\tilde z_1$ and then follow $(\tilde z_2,\dotsc, \tilde z_{2g-1},
y)$. In the first case, if one truncates the geodesic when it reaches
$z_0$ and then adds the edge $[z_0\tilde z_1]$, one gets a geodesic
from $e$ to $\tilde z_1$. It follows that the essential type of $y$
is $(2g, *,\dotsc, *, \widehat\tessential(\tilde z_1))$, where
$\widehat\tessential(\tilde z_1)$ is the essential type of $\tilde
z_1$ minus its first entry (i.e., the type $2g$ of $\tilde z_1$), and
where there are $2g-1$ stars. Since $\tilde z_1$ is a successor of
$z_0$, the induction hypothesis ensures that its essential type can
be determined from that of $z_0$, which is given by that of $x$. When
$t(z_1)\not=2g$, we have shown that in all situations
$\tessential(x)$ determines $\tessential(y)$ in an algorithmic way.

Assume now that $t(z_1)=2g$ (see the third part of
Figure~\ref{fig:tessentialunique}). This case is very similar to the
previous one. To reach $y$, one has to reach $z_1$ or its preimage on
$R$, and then reach $y$ by going around $R$ in one direction or the
other. It follows that $\tessential(y) = (2g, *,\dotsc, *,
\widehat\tessential(z_1))$ as above. Since $\tessential(z_1)$ is
obtained by removing the last $2g-2$ entries of $\tessential(x)$, it
follows again that $\tessential(x)$ determines $\tessential(y)$.
\end{proof}

For $k>0$, let $\tessential^{(k)}(x)$ be the truncated essential
type, obtained by keeping the first $k$ entries of the essential type
of $x$. The above proof also shows that $\tessential^{(k)}(x)$
determines $\tessential^{(k+1)}(y)$ (and therefore
$\tessential^{(k)}(y)$) for any successor $y$ of $x$. In the same
way, if one considers a truncation according to a weight $w=(w_0,
w_1,\dotsc, w_{2g}, w_{1'}, w_{2'},w_*)$ and a threshold $k$, then
$\tessential^{(k,w)}(x)$ determines $\tessential^{(k,w)}(y)$ for any
successor $y$ of $x$, if the weight $w_*$ is maximal among all
weights. This last requirement is necessary for the following reason:
the essential type of $y$ can be obtained from that of $x$ by
replacing some entries with stars; if this could decrease the weight
of the resulting sequence, one might need to look further to
determine $\tessential^{(k,w)}(y)$, and $\tessential^{(k,w)}(x)$
might not be sufficient.

Under these conditions, it follows that $\tessential^{(k)}$ and
$\tessential^{(k,w)}$ are (Perron-Frobenius) type systems, in the
sense of Definition~\ref{type_system}. Hence, we can use
Theorem~\ref{main_alg} to estimate the spectral radius of the
corresponding random walk. Again, it turns out that it is more
efficient to truncate using weights than length.

In genus $2$, taking the weights $w=(1,2,3,4,1,2,4)$ and the
threshold $k=25$, we obtain a matrix of size $8,999,902$. The
corresponding bound on the spectral radius is $\rho\geq 0.662772$,
proving Theorem~\ref{main_thm}. Those bounds were obtained on a
personal computer with a memory of $12 \text{GB}$ (memory is indeed
the main limiting factor, since one should store all truncated
essential types to create the matrix $M$). With more memory, one
would get better estimates, but it is unlikely that those estimates
converge to the true spectral radius when $k$ tends to infinity: to
recover it, it is probably necessary to distinguish even more points,
for instance by using Cannon's cone types instead of the canonical
types (but this would become totally impractical).

In higher genus, here are the bounds we obtain.
\begin{thm}
In genus $3$, using $\tessential^{(k,w)}$ with
$w=(1,2,3,4,5,6,1,2,6)$ and $k=25$, we get a matrix of size
$7,307,293$ and the estimate $\rho\geq 0.5527735593$.

In genus $4$, using $\tsuffix^{(k,w)}$ with $w=(1,2,3,4,5,6,7,8)$ and
$k=24$, we get a matrix of size $4,120,495$ and the estimate $\rho
\geq 0.48412292068$.
\end{thm}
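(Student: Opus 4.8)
The plan is to obtain both bounds as direct applications of Theorem~\ref{main_alg}, exactly as for genus~$2$ and Theorem~\ref{main_thm}, the only genuinely new points being (i) that the type systems $\tessential^{(k,w)}$ and $\tsuffix^{(k,w)}$ make sense in genus $g\geq 3$ and (ii) the scale of the computation. First I would check that the stated systems are Perron--Frobenius type systems in the sense of Definition~\ref{type_system}. For $\tsuffix^{(k,w)}$ in genus $4$ this is the Proposition of Section~\ref{subsec_suffix} read verbatim for $g=4$ (with $\abs{S}=16$), since nothing there used $g=2$. For $\tessential^{(k,w)}$ in genus $3$ I would re-run the proof of Proposition~\ref{prop:tessential_type}: that argument is entirely local to the $4g$-gon tessellation dual to the Cayley graph (one follows a geodesic around the last relator polygon $R$, uses that interior vertices of $R$ have unique predecessors, and splits into the cases $t(z_1)\neq 2g$, $t(z_1)=2g$, and the $1'/2'$ subcase), and it goes through unchanged for every $g\geq 2$; I would also verify that the chosen weight $w=(1,2,3,4,5,6,1,2,6)$ has its $*$-entry maximal among all entries, which is the condition ensuring that $\tessential^{(k,w)}(x)$ determines $\tessential^{(k,w)}(y)$ for each successor $y$ of $x$ (replacing entries by $*$ cannot then lower the truncation weight).

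Next I would carry out the five-stage computation. (1) Build the small (modified) Cannon matrix $M^{(0)}$ in the relevant genus and rigorously enclose its dominant eigenvalue $e^{v}$ and positive eigenvector $A^{(0)}$ in narrow rational intervals; this matrix is tiny, so any certified eigenvalue routine (or an explicit characteristic polynomial plus root isolation) suffices. (2) Enumerate, starting from the identity and using the transition rules for successors of each type, all truncated types that actually occur; this produces the claimed index sets of sizes $7{,}307{,}293$ and $4{,}120{,}495$ and the corresponding sparse matrix $M$ (at most $2g$ nonzero entries per column). (3) Compute the eigenvector $A$ of the large matrix $\tilde M$ from $A^{(0)}$, $M^{(0)}$ and $e^{v}$ by the counting identity of~\eqref{eq:bijection} and its essential-type analogue (the dominant eigenvalue of $\tilde M$ is again $e^{v}$, the growth of the group), propagating interval enclosures. (4) Using these enclosures, form a rational matrix $\underline{M''}$ that underestimates entrywise the symmetric matrix $M''=(M'+{M'}^{*})/2$ of Remark~\ref{rmk:sym}; then certify a lower bound $\underline{\lambda}\leq \rho(M'')=\lambda$ by a Collatz--Wielandt estimate $\rho(\underline{M''})\geq \min_i (\underline{M''}v)_i/v_i$ applied to a good approximate Perron vector $v>0$ found by (non-rigorous) power iteration, using monotonicity of the Perron eigenvalue under entrywise domination together with the non-decreasing ratio property quoted after~\eqref{eq:bijection}. (5) Conclude by~\eqref{eq_main_rho}: $\rho\geq 2e^{-v/2}\lambda/\abs{S}\geq 2\beta\,\underline{\lambda}/\abs{S}$, where $\beta$ is a rational lower bound for $e^{-v/2}$ from stage (1), and evaluate the right-hand side to the stated precision.

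The main obstacle is purely computational: the index sets have several million elements and, as already noted for genus~$2$, memory is the bottleneck, since one must retain all truncated types to build and index $M$; the power iteration on a sparse matrix of this size is then straightforward but must be run long enough. A secondary point is rigour of arithmetic: stages (1), (3) and (4) must be done with interval arithmetic and outward (here, downward) rounding so that the final inequality $\rho\geq 2\beta\underline{\lambda}/\abs{S}$ is mathematically certified; this is routine and introduces no new mathematical difficulty beyond Theorem~\ref{main_alg} and Remark~\ref{rmk:sym}.

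As a sanity check that orients the computation, in high genus the surface group is locally close to a tree, so the bounds produced sit only marginally above the trivial free-group bound $2\sqrt{\abs{S}-1}/\abs{S}$, which equals $\sqrt{11}/6=0.55277\dots$ for $g=3$ and $\sqrt{15}/8=0.484122918\dots$ for $g=4$; the stated estimates $0.5527735593$ and $0.48412292068$ are consistent with this, which is a useful consistency test during the rigorous evaluation of~\eqref{eq_main_rho}.
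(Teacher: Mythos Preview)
Your proposal is correct and follows essentially the same route as the paper: both statements are direct applications of Theorem~\ref{main_alg} to the specified type systems, and the paper's ``proof'' is nothing more than the output of the Sage code in the appendix (\texttt{test\_weight\_surface(3,[1,2,3,4,5,6,1,2,6],25)} and \texttt{test\_weight(4,[1,2,3,4,5,6,7,8],24)}) together with the observation that the arguments of Section~\ref{subsec_suffix} and Proposition~\ref{prop:tessential_type} are genus-independent.

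The one difference worth noting is that you are more careful about rigour than the paper itself: the paper runs everything in double-precision floating point (Sage's \texttt{RDF}) and simply reports the output, whereas you propose interval enclosures for $e^v$ and $A^{(0)}$, entrywise underestimation of $M''$, and a Collatz--Wielandt certificate for $\lambda$. This is a genuine improvement in principle---the paper's computation is not strictly certified---but it is not what the paper does, and at the stated precision (nine or ten significant figures, with matrices of size $\sim 10^7$ and only a few hundred power iterations) ordinary double precision is in practice ample, which is presumably why the author did not bother.
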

When the genus increases, the groups look more and more like free
groups. This means that the spectral radius is very close to that of
the random walk on a tree with the corresponding number of generators
(i.e., $\sqrt{4g-1}/(2g)$ in general, specializing to $0.55277079$ in
genus $3$ and $0.48412291827$ in genus $4$), and to get a significant
improvement one needs to take very large matrices. The paths counting
arguments of Bartholdi~\cite{bartholdi_rho}, on the other hand, are
more and more precise when the groups looks more and more like a free
group: in genus $3$, he gets $\rho\geq 0.5527735401$, which is just
slightly worse than our estimate, while requiring considerably less
computer power. In genus $4$, he gets $\rho\geq 0.48412292074$, which
is already better than our estimate, and the situation is certainly
the same in higher genus.

In genus $3$, the upper bound of Nagnibeda~\cite{nagnibeda_rho} is
$\rho\leq 0.552792$, while our lower bound (or Bartholdi's) is much
closer to the naive lower bound coming from the free group. It is
unclear which one is closer to the real value of the spectral radius.

\medskip

For the practical implementation, as in the end of
Paragraph~\ref{subsec_suffix}, it is important to know the
asymptotics of the number of points on $\Sbb^n$ having a given
truncated essential type. We illustrate how to compute such
asymptotics in three significant examples, that can be combined to
handle the general case.
\begin{itemize}
\item Assume first that the type $(i_0,\dotsc, i_m)$ does not
    contain any ambiguous letter, i.e., $i_\ell\not=*$ for all
    $\ell$. In this case, the formula~\eqref{eq:bijection} still
    holds.
\item Assume now that the type is of the form $(i,*,\dotsc, *,
    j)$ for some types $i,j\not=*$, and some number $N$ of stars.
    Let $x$ be a point with the above truncated essential type,
    on a sphere $\Sbb^n$, and let $y$ be the point of modified
    type $j$, on the sphere $\Sbb^{n-N-1}$, such that any
    geodesic from $e$ to $x$ goes through $y$. Since the type
    $2g$ is the only one to have several predecessors,
    necessarily $i=2g$. On the other hand, $j$ can be any type in
    $\{1,1',2,2',3,\dotsc, 2g\}$.

    The discussion in the proof of
    Proposition~\ref{prop:tessential_type} (see in particular the
    last $2$ cases of Figure~\ref{fig:tessentialunique}) implies
    that $N = (2g-1)m$, for some integer $m$: a geodesic from $e$
    to $x$ goes through $y$, and then it follows $m$ $4g$-gons.

    Let us first study the case $m=1$. Consider a point $y$ of
    type $j$, and a $4g$-gon $R$ based at $y$ (i.e., $y$ is the
    closest point to $e$ on this $4g$-gon). There are $4g-2$ such
    $R$ if $j\not= 2g$ and $4g-3$ if $j=2g$ (since a point of
    type $2g$ has two incoming edges). On $R$, consider the point
    $x$ that is the farthest from $e$: it has type $2g$, and one
    can reach it from $y$ by going around $R$ in one direction or
    the other. It follows that
    \begin{multline*}
    \Card\{x\in \Sbb^n \st \tsuffix^{(w,k)}(x)=(2g,*,\dotsc,*,j)\text{ with $N=2g-1$ stars}\}
    \\ = a_j\Card\{y\in \Sbb^{n-N-1} \st t'(y)=j\},
    \end{multline*}
    where $a_j = 4g-3$ if $j\in \{2g-1, 2g\}$, and $a_j=4g-2$
    otherwise. The case of a point $y$ of type $j=2g-1$ is
    special since, on the $4g$-gon $R$ containing the successors
    of $y$ of types $1'$ and $2'$, one of the geodesic paths from
    $y$ to the opposite point $x$ goes through a vertex of type
    $2g$, giving rise to further ambiguities. Hence, this
    $4g$-gon should be discarded from the above counting, leaving
    only $2g-3$ suitable $4g$-gons.

    In the case of a general $m\geq 1$, the number of points $x$
    corresponding to a given point $y$ of type $j$ may be
    obtained first by choosing a suitable $4g$-gon $R_1$ based at
    $y$ (giving $a_j$ choices). Further ambiguities can only be
    obtained by choosing one of the two predecessors (of type
    $2g-1$) of the point that is opposite to $y$ on $R_1$, and
    then following the $4g$-gon $R_2$ based at this point and
    containing its successors of type $1'$ and $2'$. This gives
    $2$ choices for $R_2$, then $2$ more choices for the next
    $4g$-gon $R_3$, and so on. In the end, we obtain
    \begin{multline*}
    \Card\{x\in \Sbb^n \st \tsuffix^{(w,k)}(x)=(2g,*,\dotsc,*,j)\text{ with $N=(2g-1)m$ stars}\}
    \\ = a_j 2^{m-1}\Card\{y\in \Sbb^{n-N-1} \st t'(y)=j\}.
    \end{multline*}
\item Finally, assume that the type is $(i, *,\dotsc, *)$ with
    some number $N$ of stars (the situation is very similar to
    the previous one). Necessarily, as above, $i=2g$. Write
    $N=(2g-1)m+k$ for some $k\in \{1,\dotsc, 2g-1\}$.

    For $m=0$, a point of type $2g$ has two predecessors, so
    there are always ambiguities regarding his first $2g-1$
    ancestors. Hence, the set of points we are considering is
    simply the set of points of type $2g$ on the sphere $\Sbb^n$,
    and there is nothing to do.

    For $m=1$, there can be further ambiguities only if $x$ has
    an ancestor $x'$ of generation $2g-1$ that has type $2g$, and
    $x$ is at the tip of the $4g$-gon $R$ based at one of the two
    predecessors of $x'$ (of type $2g-1$), and containing $x'$.
    There are two choices for $R$.

    Proceeding inductively in the case of a general $m$, we get
    \begin{multline*}
    \Card\{x\in \Sbb^n \st \tsuffix^{(w,k)}(x)=(2g,*,\dotsc,*)\text{ with $N=(2g-1)m+k$ stars}\}
    \\ = 2^{m}\Card\{y\in \Sbb^{n-(2g-1)m} \st t(y)=2g\}.
    \end{multline*}
\end{itemize}
A general truncated essential type is the concatenation of successive
types of the form just described in the examples. We can thus count
the number of points with a given type just by combining the above
formulas.

\stopcompilationbeforecode

\appendix

\section{Implementation}

In this appendix (not intended for publication), we include the code
of the program used to get the numerical estimates of this paper,
written using the \verb+sage+ mathematical software and converted to
\LaTeX{} form thanks to \verb+sws2tex+.

\makeatletter
\def\PY@reset{\let\PY@it=\relax \let\PY@bf=\relax%
    \let\PY@ul=\relax \let\PY@tc=\relax%
    \let\PY@bc=\relax \let\PY@ff=\relax}
\def\PY@tok#1{\csname PY@tok@#1\endcsname}
\def\PY@toks#1+{\ifx\relax#1\empty\else%
    \PY@tok{#1}\expandafter\PY@toks\fi}
\def\PY@do#1{\PY@bc{\PY@tc{\PY@ul{%
    \PY@it{\PY@bf{\PY@ff{#1}}}}}}}
\def\PY#1#2{\PY@reset\PY@toks#1+\relax+\PY@do{#2}}

\def\PY@tok@gd{\def\PY@tc##1{\textcolor[rgb]{0.63,0.00,0.00}{##1}}}
\def\PY@tok@gu{\let\PY@bf=\textbf\def\PY@tc##1{\textcolor[rgb]{0.50,0.00,0.50}{##1}}}
\def\PY@tok@gt{\def\PY@tc##1{\textcolor[rgb]{0.00,0.25,0.82}{##1}}}
\def\PY@tok@gs{\let\PY@bf=\textbf}
\def\PY@tok@gr{\def\PY@tc##1{\textcolor[rgb]{1.00,0.00,0.00}{##1}}}
\def\PY@tok@cm{\let\PY@it=\textit\def\PY@tc##1{\textcolor[rgb]{0.25,0.50,0.56}{##1}}}
\def\PY@tok@vg{\def\PY@tc##1{\textcolor[rgb]{0.73,0.38,0.84}{##1}}}
\def\PY@tok@m{\def\PY@tc##1{\textcolor[rgb]{0.13,0.50,0.31}{##1}}}
\def\PY@tok@mh{\def\PY@tc##1{\textcolor[rgb]{0.13,0.50,0.31}{##1}}}
\def\PY@tok@cs{\def\PY@tc##1{\textcolor[rgb]{0.25,0.50,0.56}{##1}}\def\PY@bc##1{\colorbox[rgb]{1.00,0.94,0.94}{##1}}}
\def\PY@tok@ge{\let\PY@it=\textit}
\def\PY@tok@vc{\def\PY@tc##1{\textcolor[rgb]{0.73,0.38,0.84}{##1}}}
\def\PY@tok@il{\def\PY@tc##1{\textcolor[rgb]{0.13,0.50,0.31}{##1}}}
\def\PY@tok@go{\def\PY@tc##1{\textcolor[rgb]{0.19,0.19,0.19}{##1}}}
\def\PY@tok@cp{\def\PY@tc##1{\textcolor[rgb]{0.00,0.44,0.13}{##1}}}
\def\PY@tok@gi{\def\PY@tc##1{\textcolor[rgb]{0.00,0.63,0.00}{##1}}}
\def\PY@tok@gh{\let\PY@bf=\textbf\def\PY@tc##1{\textcolor[rgb]{0.00,0.00,0.50}{##1}}}
\def\PY@tok@ni{\let\PY@bf=\textbf\def\PY@tc##1{\textcolor[rgb]{0.84,0.33,0.22}{##1}}}
\def\PY@tok@nl{\let\PY@bf=\textbf\def\PY@tc##1{\textcolor[rgb]{0.00,0.13,0.44}{##1}}}
\def\PY@tok@nn{\let\PY@bf=\textbf\def\PY@tc##1{\textcolor[rgb]{0.05,0.52,0.71}{##1}}}
\def\PY@tok@no{\def\PY@tc##1{\textcolor[rgb]{0.38,0.68,0.84}{##1}}}
\def\PY@tok@na{\def\PY@tc##1{\textcolor[rgb]{0.25,0.44,0.63}{##1}}}
\def\PY@tok@nb{\def\PY@tc##1{\textcolor[rgb]{0.00,0.44,0.13}{##1}}}
\def\PY@tok@nc{\let\PY@bf=\textbf\def\PY@tc##1{\textcolor[rgb]{0.05,0.52,0.71}{##1}}}
\def\PY@tok@nd{\let\PY@bf=\textbf\def\PY@tc##1{\textcolor[rgb]{0.33,0.33,0.33}{##1}}}
\def\PY@tok@ne{\def\PY@tc##1{\textcolor[rgb]{0.00,0.44,0.13}{##1}}}
\def\PY@tok@nf{\def\PY@tc##1{\textcolor[rgb]{0.02,0.16,0.49}{##1}}}
\def\PY@tok@si{\let\PY@it=\textit\def\PY@tc##1{\textcolor[rgb]{0.44,0.63,0.82}{##1}}}
\def\PY@tok@s2{\def\PY@tc##1{\textcolor[rgb]{0.25,0.44,0.63}{##1}}}
\def\PY@tok@vi{\def\PY@tc##1{\textcolor[rgb]{0.73,0.38,0.84}{##1}}}
\def\PY@tok@nt{\let\PY@bf=\textbf\def\PY@tc##1{\textcolor[rgb]{0.02,0.16,0.45}{##1}}}
\def\PY@tok@nv{\def\PY@tc##1{\textcolor[rgb]{0.73,0.38,0.84}{##1}}}
\def\PY@tok@s1{\def\PY@tc##1{\textcolor[rgb]{0.25,0.44,0.63}{##1}}}
\def\PY@tok@gp{\let\PY@bf=\textbf\def\PY@tc##1{\textcolor[rgb]{0.78,0.36,0.04}{##1}}}
\def\PY@tok@sh{\def\PY@tc##1{\textcolor[rgb]{0.25,0.44,0.63}{##1}}}
\def\PY@tok@ow{\let\PY@bf=\textbf\def\PY@tc##1{\textcolor[rgb]{0.00,0.44,0.13}{##1}}}
\def\PY@tok@sx{\def\PY@tc##1{\textcolor[rgb]{0.78,0.36,0.04}{##1}}}
\def\PY@tok@bp{\def\PY@tc##1{\textcolor[rgb]{0.00,0.44,0.13}{##1}}}
\def\PY@tok@c1{\let\PY@it=\textit\def\PY@tc##1{\textcolor[rgb]{0.25,0.50,0.56}{##1}}}
\def\PY@tok@kc{\let\PY@bf=\textbf\def\PY@tc##1{\textcolor[rgb]{0.00,0.44,0.13}{##1}}}
\def\PY@tok@c{\let\PY@it=\textit\def\PY@tc##1{\textcolor[rgb]{0.25,0.50,0.56}{##1}}}
\def\PY@tok@mf{\def\PY@tc##1{\textcolor[rgb]{0.13,0.50,0.31}{##1}}}
\def\PY@tok@err{\def\PY@bc##1{\fcolorbox[rgb]{1.00,0.00,0.00}{1,1,1}{##1}}}
\def\PY@tok@kd{\let\PY@bf=\textbf\def\PY@tc##1{\textcolor[rgb]{0.00,0.44,0.13}{##1}}}
\def\PY@tok@ss{\def\PY@tc##1{\textcolor[rgb]{0.32,0.47,0.09}{##1}}}
\def\PY@tok@sr{\def\PY@tc##1{\textcolor[rgb]{0.14,0.33,0.53}{##1}}}
\def\PY@tok@mo{\def\PY@tc##1{\textcolor[rgb]{0.13,0.50,0.31}{##1}}}
\def\PY@tok@mi{\def\PY@tc##1{\textcolor[rgb]{0.13,0.50,0.31}{##1}}}
\def\PY@tok@kn{\let\PY@bf=\textbf\def\PY@tc##1{\textcolor[rgb]{0.00,0.44,0.13}{##1}}}
\def\PY@tok@o{\def\PY@tc##1{\textcolor[rgb]{0.40,0.40,0.40}{##1}}}
\def\PY@tok@kr{\let\PY@bf=\textbf\def\PY@tc##1{\textcolor[rgb]{0.00,0.44,0.13}{##1}}}
\def\PY@tok@s{\def\PY@tc##1{\textcolor[rgb]{0.25,0.44,0.63}{##1}}}
\def\PY@tok@kp{\def\PY@tc##1{\textcolor[rgb]{0.00,0.44,0.13}{##1}}}
\def\PY@tok@w{\def\PY@tc##1{\textcolor[rgb]{0.73,0.73,0.73}{##1}}}
\def\PY@tok@kt{\def\PY@tc##1{\textcolor[rgb]{0.56,0.13,0.00}{##1}}}
\def\PY@tok@sc{\def\PY@tc##1{\textcolor[rgb]{0.25,0.44,0.63}{##1}}}
\def\PY@tok@sb{\def\PY@tc##1{\textcolor[rgb]{0.25,0.44,0.63}{##1}}}
\def\PY@tok@k{\let\PY@bf=\textbf\def\PY@tc##1{\textcolor[rgb]{0.00,0.44,0.13}{##1}}}
\def\PY@tok@se{\let\PY@bf=\textbf\def\PY@tc##1{\textcolor[rgb]{0.25,0.44,0.63}{##1}}}
\def\PY@tok@sd{\let\PY@it=\textit\def\PY@tc##1{\textcolor[rgb]{0.25,0.44,0.63}{##1}}}

\def\PYZbs{\char`\\}
\def\PYZus{\char`\_}
\def\PYZob{\char`\{}
\def\PYZcb{\char`\}}
\def\PYZca{\char`\^}
\def\PYZat{@}
\def\PYZlb{[}
\def\PYZrb{]}
\makeatother

\attachfile[description=You can get the Sage worksheet by clicking this icon.]{estimateRho.sws}

 Consider a group without odd cycle, orient every edge towards infinity. Assume that every point outside a finite neighborhood of the identity has a type, taking finitely many values, such that the type of a point determines the type of its successors. The transition rules (i.e., the number of successors) are given by a matrix $M$, we also assume that this matrix is transitive and aperiodic (i.e., it is a Perron-Frobenius matrix). More precisely, $M_{ij}$ is the number of successors of type $i$ of any point of type $j$.

 The number of points of type $i$ in the sphere of radius $n$ can be determined as follows: let $\tilde M_{ij} = M_{ij}/p_i$, where $p_i$ is the number of predecessors of any point of type $i$. Then $\tilde M^n$ (plus some information related to the finite ball around $e$ where types are not well defined) dictates the growth of points of type $i$: denoting by $(A_0,\dots, A_{t-1})$ the eigenvector of $\tilde M$ corresponding to the maximal eigenvalue $e^v$ of $\tilde M$, then $Card(\mathbb{S}^n \cap type_i) \sim c A_i e^{nv}$, for some constant $c$ not depending on the type.

 One can then bound from below the spectral radius of the simple random walk, as follows: choose some $\alpha  <  e^{-v/2}$. We define a function $u$ equal to $b_i \alpha^n$ on points of type $i$ at distance $n$ of the identity, for some $b_i$ yet to be chosen. Then $u$ belongs to $\ell^2$. Moreover, if $Q$ is the Markov operator of the random walk and $d$ is the number of neighbors of any point,

  \begin{align*}
  \langle Qu, u\rangle &= \frac{1}{d} \sum_{x\sim y} u(x) u(y) = \frac{2}{d} \sum_{x\to y} u(x)u(y) \sim \frac{2}{d} \sum_{i,j,n} cA_j e^{nv} b_j M_{i,j} b_i \alpha^{2n+1}
  \\&
  = \frac{2}{d} \sum_{i,j,n} cA_j b_j M_{i,j} b_i \alpha/(1-e^{v}\alpha^2).
  \end{align*}

 On the other hand, $\langle u, u \rangle \sim \sum cA_i b_i^2 /(1-\alpha^2 e^v)$. Letting $\alpha$ tend to $e^{-v/2}$, one obtains

 \[ \rho \geq \frac{e^{-v/2}}{d/2} \frac{ \sum A_j b_j M_{i,j} b_i}{\sum A_i b_i^2}.\]

 Note that the unknown constant $c$ has disappeared. To obtain good results, one is reduced to an optimization problem for a quadratic form. Writing $c_i = A_i^{1/2}b_i$, one should maximize

 \[\frac{\sum c_j c_i M_{i,j} A_j^{1/2}/A_i^{1/2}}{\sum d_i^2}.\]

 Writing $M'_{i,j} = M_{i,j} A_j^{1/2}/A_i^{1/2}$, this is the maximum on the unit sphere of the quadratic form defined by $M'$. This is also the maximal eigenvalue of $M''=(M'+(M')^t)/2$. Hence, all this can be readily implemented algorithmically. Let $D$ be the matrix with $A_i$ on the diagonal, then $M' = D^{-1/2} M D^{1/2}$, and $M''= D^{-1/2}M D^{1/2} + D^{1/2}M^t D^{-1/2}$. Conjugating by $D^{-1/2}$, we remark that this matrix is similar to $U = M + DM^tD^{-1}$, hence the value we are seeking is also the maximal eigenvalue of $U$. The interest of this remark is that there is no square root any more, everything can be computed in the algebraic field generated by $e^v$ if we want to do everything algebraically.

 In the implementation, we split everything into small tasks, since we will improve several of them later on. Since we will deal with large sparse matrices, we try to iterate only over the nonzero entries of the matrices, not over all entries.

\begin{Verbatim}[commandchars=\\\{\}]
\PY{k}{def} \PY{n+nf}{apply\PYZus{}map\PYZus{}and\PYZus{}ring}\PY{p}{(}\PY{n}{M}\PY{p}{,} \PY{n}{R}\PY{p}{,} \PY{n}{f}\PY{p}{)}\PY{p}{:}
    \PY{l+s+sd}{"""f is a function of the form f((i,j), a)=b, with b=0 if a=0.}
\PY{l+s+sd}{    R is a ring.}
\PY{l+s+sd}{    Returns a matrix with coefficients in R with }
\PY{l+s+sd}{    coefficient f((i,j), M[i,j]) at position (i,j).}
\PY{l+s+sd}{    """}
    \PY{n}{N} \PY{o}{=} \PY{n}{Matrix}\PY{p}{(}\PY{n}{R}\PY{p}{,} \PY{n}{M}\PY{o}{.}\PY{n}{nrows}\PY{p}{(}\PY{p}{)}\PY{p}{,} \PY{n}{M}\PY{o}{.}\PY{n}{ncols}\PY{p}{(}\PY{p}{)}\PY{p}{,} \PY{l+m+mi}{0}\PY{p}{,} \PY{n}{sparse} \PY{o}{=} \PY{n}{M}\PY{o}{.}\PY{n}{is\PYZus{}sparse}\PY{p}{(}\PY{p}{)}\PY{p}{)}
    \PY{k}{for} \PY{p}{(}\PY{n}{i}\PY{p}{,}\PY{n}{j}\PY{p}{)} \PY{o+ow}{in} \PY{n}{M}\PY{o}{.}\PY{n}{nonzero\PYZus{}positions}\PY{p}{(}\PY{n}{copy}\PY{o}{=}\PY{n}{false}\PY{p}{)}\PY{p}{:}
        \PY{n}{N}\PY{p}{[}\PY{n}{i}\PY{p}{,}\PY{n}{j}\PY{p}{]} \PY{o}{=} \PY{n}{f}\PY{p}{(}\PY{p}{(}\PY{n}{i}\PY{p}{,}\PY{n}{j}\PY{p}{)}\PY{p}{,} \PY{n}{M}\PY{p}{[}\PY{n}{i}\PY{p}{,}\PY{n}{j}\PY{p}{]}\PY{p}{)}
    \PY{k}{return} \PY{n}{N}


\PY{k}{class} \PY{n+nc}{RhoEstimatorBasic}\PY{p}{:}
    \PY{l+s+sd}{"""Given a Perron-Frobenius matrix M such that M[i,j] is the number }
\PY{l+s+sd}{    of transitions from type j to type i, and an integer d corresponding to the}
\PY{l+s+sd}{    number of neighbors of every point in the graph,}
\PY{l+s+sd}{    estimates from below the spectral radius of the simple random}
\PY{l+s+sd}{    walk using the algorithm described in the previous paragraph.}
\PY{l+s+sd}{    estimate() returns this estimate.}
\PY{l+s+sd}{    """}
    \PY{k}{def} \PY{n+nf}{\PYZus{}\PYZus{}init\PYZus{}\PYZus{}}\PY{p}{(}\PY{n+nb+bp}{self}\PY{p}{,} \PY{n}{M}\PY{p}{,} \PY{n}{d}\PY{p}{)}\PY{p}{:}
        \PY{n+nb+bp}{self}\PY{o}{.}\PY{n}{M} \PY{o}{=} \PY{n}{M}
        \PY{n+nb+bp}{self}\PY{o}{.}\PY{n}{d} \PY{o}{=} \PY{n}{d}
        \PY{n+nb+bp}{self}\PY{o}{.}\PY{n}{types} \PY{o}{=} \PY{n}{M}\PY{o}{.}\PY{n}{nrows}\PY{p}{(}\PY{p}{)}

    \PY{k}{def} \PY{n+nf}{Mtilde}\PY{p}{(}\PY{n+nb+bp}{self}\PY{p}{)}\PY{p}{:}
        \PY{n}{predecessors} \PY{o}{=} \PY{p}{[}\PY{n+nb+bp}{self}\PY{o}{.}\PY{n}{d} \PY{o}{-} \PY{n+nb}{sum}\PY{p}{(}\PY{n+nb+bp}{self}\PY{o}{.}\PY{n}{M}\PY{o}{.}\PY{n}{column}\PY{p}{(}\PY{n}{i}\PY{p}{)}\PY{p}{)} \PY{k}{for} \PY{n}{i} \PY{o+ow}{in} \PY{n+nb}{xrange}\PY{p}{(}\PY{n+nb+bp}{self}\PY{o}{.}\PY{n}{types}\PY{p}{)}\PY{p}{]}
        \PY{k}{return} \PY{n}{apply\PYZus{}map\PYZus{}and\PYZus{}ring}\PY{p}{(}\PY{n+nb+bp}{self}\PY{o}{.}\PY{n}{M}\PY{p}{,} \PY{n}{RDF}\PY{p}{,} \PY{k}{lambda} \PY{p}{(}\PY{n}{i}\PY{p}{,}\PY{n}{j}\PY{p}{)}\PY{p}{,}\PY{n}{a}\PY{p}{:} \PY{n}{a}\PY{o}{/}\PY{n}{predecessors}\PY{p}{[}\PY{n}{i}\PY{p}{]}\PY{p}{)}

    \PY{k}{def} \PY{n+nf}{growth\PYZus{}and\PYZus{}A}\PY{p}{(}\PY{n+nb+bp}{self}\PY{p}{)}\PY{p}{:}
        \PY{l+s+sd}{"""returns the growth e\PYZca{}v and the eigenvector A of the graph"""}
        \PY{n}{Mtilde\PYZus{}eigenvectors} \PY{o}{=} \PY{n+nb+bp}{self}\PY{o}{.}\PY{n}{Mtilde}\PY{p}{(}\PY{p}{)}\PY{o}{.}\PY{n}{dense\PYZus{}matrix}\PY{p}{(}\PY{p}{)}\PY{o}{.}\PY{n}{right\PYZus{}eigenvectors}\PY{p}{(}\PY{p}{)}
        \PY{n}{Mtilde\PYZus{}eigenvalues}  \PY{o}{=} \PY{p}{[}\PY{n}{Mtilde\PYZus{}eigenvectors}\PY{p}{[}\PY{n}{j}\PY{p}{]}\PY{p}{[}\PY{l+m+mi}{0}\PY{p}{]}\PY{o}{.}\PY{n}{real}\PY{p}{(}\PY{p}{)}
                                 \PY{k}{for} \PY{n}{j} \PY{o+ow}{in} \PY{n+nb}{xrange}\PY{p}{(}\PY{n+nb+bp}{self}\PY{o}{.}\PY{n}{types}\PY{p}{)}\PY{p}{]}
        \PY{n}{growth}         \PY{o}{=} \PY{n+nb}{max}\PY{p}{(}\PY{n}{Mtilde\PYZus{}eigenvalues}\PY{p}{)}
        \PY{n}{A}              \PY{o}{=} \PY{n}{Mtilde\PYZus{}eigenvectors}\PY{p}{[}\PY{n}{Mtilde\PYZus{}eigenvalues}\PY{o}{.}\PY{n}{index}\PY{p}{(}\PY{n}{growth}\PY{p}{)}\PY{p}{]}\PY{p}{[}\PY{l+m+mi}{1}\PY{p}{]}\PY{p}{[}\PY{l+m+mi}{0}\PY{p}{]}
        \PY{k}{return} \PY{n}{growth}\PY{p}{,} \PY{n}{A}

    \PY{k}{def} \PY{n+nf}{eig\PYZus{}Msym}\PY{p}{(}\PY{n+nb+bp}{self}\PY{p}{)}\PY{p}{:}
        \PY{l+s+sd}{"""returns the maximal expansion of M' on the unit sphere"""}
        \PY{n+nb+bp}{self}\PY{o}{.}\PY{n}{growth}\PY{p}{,} \PY{n+nb+bp}{self}\PY{o}{.}\PY{n}{A} \PY{o}{=} \PY{n+nb+bp}{self}\PY{o}{.}\PY{n}{growth\PYZus{}and\PYZus{}A}\PY{p}{(}\PY{p}{)}
        \PY{n}{Mprime} \PY{o}{=} \PY{n}{apply\PYZus{}map\PYZus{}and\PYZus{}ring}\PY{p}{(}\PY{n+nb+bp}{self}\PY{o}{.}\PY{n}{M}\PY{p}{,} \PY{n}{RDF}\PY{p}{,}
                                    \PY{k}{lambda} \PY{p}{(}\PY{n}{i}\PY{p}{,}\PY{n}{j}\PY{p}{)}\PY{p}{,}\PY{n}{a}\PY{p}{:} \PY{n}{a}\PY{o}{*}\PY{n+nb}{abs}\PY{p}{(}\PY{n+nb+bp}{self}\PY{o}{.}\PY{n}{A}\PY{p}{[}\PY{n}{j}\PY{p}{]}\PY{o}{/}\PY{n+nb+bp}{self}\PY{o}{.}\PY{n}{A}\PY{p}{[}\PY{n}{i}\PY{p}{]}\PY{p}{)}\PY{o}{\PYZca{}}\PY{p}{(}\PY{l+m+mi}{1}\PY{o}{/}\PY{l+m+mi}{2}\PY{p}{)}\PY{p}{)}
        \PY{k}{return} \PY{n+nb+bp}{self}\PY{o}{.}\PY{n}{max\PYZus{}expansion}\PY{p}{(}\PY{n}{Mprime}\PY{p}{)}

    \PY{k}{def} \PY{n+nf}{max\PYZus{}expansion}\PY{p}{(}\PY{n+nb+bp}{self}\PY{p}{,} \PY{n}{Mprime}\PY{p}{)}\PY{p}{:}
        \PY{l+s+sd}{"""returns the maximum of (x, Mprime x) for x in the unit sphere"""}
        \PY{n}{Msym}   \PY{o}{=} \PY{p}{(}\PY{n}{Mprime} \PY{o}{+} \PY{n}{Mprime}\PY{o}{.}\PY{n}{transpose}\PY{p}{(}\PY{p}{)}\PY{p}{)}\PY{o}{/}\PY{l+m+mi}{2}
        \PY{k}{return} \PY{n+nb}{max}\PY{p}{(}\PY{n}{Msym}\PY{o}{.}\PY{n}{dense\PYZus{}matrix}\PY{p}{(}\PY{p}{)}\PY{o}{.}\PY{n}{eigenvalues}\PY{p}{(}\PY{p}{)}\PY{p}{)}

    \PY{k}{def} \PY{n+nf}{estimate}\PY{p}{(}\PY{n+nb+bp}{self}\PY{p}{)}\PY{p}{:}
        \PY{n}{eig} \PY{o}{=} \PY{n+nb+bp}{self}\PY{o}{.}\PY{n}{eig\PYZus{}Msym}\PY{p}{(}\PY{p}{)}
        \PY{k}{return} \PY{l+m+mi}{2}\PY{o}{*}\PY{n+nb+bp}{self}\PY{o}{.}\PY{n}{growth}\PY{o}{\PYZca{}}\PY{p}{(}\PY{o}{-}\PY{l+m+mi}{1}\PY{o}{/}\PY{l+m+mi}{2}\PY{p}{)}\PY{o}{*}\PY{n}{eig}\PY{o}{/}\PY{n+nb+bp}{self}\PY{o}{.}\PY{n}{d}
\end{Verbatim}

 In surface groups, types have been described by Cannon. The next function constructs the transition matrix for types, using his results.

\begin{Verbatim}[commandchars=\\\{\}]
\PY{k}{def} \PY{n+nf}{Msurface}\PY{p}{(}\PY{n}{g}\PY{p}{)}\PY{p}{:}
    \PY{l+s+sd}{"""Returns a (2g times 2g) matrix M.}
\PY{l+s+sd}{    M[i,j] is the number of successors of type i of a point of type j, }
\PY{l+s+sd}{    in the surface group Sigma\PYZus{}g.}
\PY{l+s+sd}{    We only consider types from 1 to 2g, since type 0, being not recurrent,}
\PY{l+s+sd}{    is not relevant for our purposes.}
\PY{l+s+sd}{    """}
    \PY{n}{M} \PY{o}{=} \PY{n}{Matrix}\PY{p}{(}\PY{l+m+mi}{2}\PY{o}{*}\PY{n}{g}\PY{p}{,} \PY{l+m+mi}{2}\PY{o}{*}\PY{n}{g}\PY{p}{,} \PY{l+m+mi}{0}\PY{p}{)}
    \PY{k}{for} \PY{n}{j} \PY{o+ow}{in} \PY{n+nb}{xrange}\PY{p}{(}\PY{l+m+mi}{0}\PY{p}{,} \PY{l+m+mi}{2}\PY{o}{*}\PY{n}{g}\PY{o}{-}\PY{l+m+mi}{1}\PY{p}{)}\PY{p}{:}
        \PY{n}{M}\PY{p}{[}\PY{l+m+mi}{0}  \PY{p}{,}\PY{n}{j}\PY{p}{]} \PY{o}{=} \PY{l+m+mi}{4}\PY{o}{*}\PY{n}{g}\PY{o}{-}\PY{l+m+mi}{3}
        \PY{n}{M}\PY{p}{[}\PY{l+m+mi}{1}  \PY{p}{,}\PY{n}{j}\PY{p}{]} \PY{o}{=} \PY{l+m+mi}{1}
        \PY{n}{M}\PY{p}{[}\PY{n}{j}\PY{o}{+}\PY{l+m+mi}{1}\PY{p}{,}\PY{n}{j}\PY{p}{]} \PY{o}{+}\PY{o}{=} \PY{l+m+mi}{1}
    \PY{n}{M}\PY{p}{[}\PY{l+m+mi}{0}\PY{p}{,}\PY{l+m+mi}{2}\PY{o}{*}\PY{n}{g}\PY{o}{-}\PY{l+m+mi}{1}\PY{p}{]} \PY{o}{=} \PY{l+m+mi}{4}\PY{o}{*}\PY{n}{g}\PY{o}{-}\PY{l+m+mi}{4}
    \PY{n}{M}\PY{p}{[}\PY{l+m+mi}{1}\PY{p}{,}\PY{l+m+mi}{2}\PY{o}{*}\PY{n}{g}\PY{o}{-}\PY{l+m+mi}{1}\PY{p}{]} \PY{o}{=} \PY{l+m+mi}{2}
    \PY{k}{return} \PY{n}{M}
\end{Verbatim}

\begin{Verbatim}[commandchars=\\\{\}]
\PY{n}{Msurface}\PY{p}{(}\PY{l+m+mi}{2}\PY{p}{)}
\end{Verbatim}

{\color{blue}
$\newcommand{\Bold}[1]{\mathbf{#1}}\left(\begin{array}{rrrr}
5 & 5 & 5 & 4 \\
2 & 1 & 1 & 2 \\
0 & 1 & 0 & 0 \\
0 & 0 & 1 & 0
\end{array}\right)$
}

\begin{Verbatim}[commandchars=\\\{\}]
\PY{n}{RhoEstimatorBasic}\PY{p}{(}\PY{n}{Msurface}\PY{p}{(}\PY{l+m+mi}{2}\PY{p}{)}\PY{p}{,} \PY{l+m+mi}{8}\PY{p}{)}\PY{o}{.}\PY{n}{estimate}\PY{p}{(}\PY{p}{)}
\end{Verbatim}

{\color{blue}
$\newcommand{\Bold}[1]{\mathbf{#1}}0.662477976598$
}

\begin{Verbatim}[commandchars=\\\{\}]
\PY{n}{RhoEstimatorBasic}\PY{p}{(}\PY{n}{Msurface}\PY{p}{(}\PY{l+m+mi}{3}\PY{p}{)}\PY{p}{,} \PY{l+m+mi}{12}\PY{p}{)}\PY{o}{.}\PY{n}{estimate}\PY{p}{(}\PY{p}{)}
\end{Verbatim}

{\color{blue}
$\newcommand{\Bold}[1]{\mathbf{#1}}0.552772892866$
}

\begin{Verbatim}[commandchars=\\\{\}]
\PY{n}{RhoEstimatorBasic}\PY{p}{(}\PY{n}{Msurface}\PY{p}{(}\PY{l+m+mi}{4}\PY{p}{)}\PY{p}{,} \PY{l+m+mi}{16}\PY{p}{)}\PY{o}{.}\PY{n}{estimate}\PY{p}{(}\PY{p}{)}
\end{Verbatim}

{\color{blue}
$\newcommand{\Bold}[1]{\mathbf{#1}}0.484122920106$
}

 These results are better than Bartholdi's in genus $2$, worse in genus $3$.

 To go further, one can use extended types: some points have a unique predecessor, i.e., the last step of geodesics ending at a point may be uniquely determined. In this case, the point also determines the Cannon type of its predecessor (note however that the Cannon type of the point does not determine the Cannon type of its predecessor). This may be used to separate the points into further categories, refining Cannon's types (while retaining the property that an extended type determines the extended types of the successors). What we have just described is extended types of length $2$. More generally, one can define sets of extended types as follows, inductively. Consider a set of words $W$ of the form $w_n=(w_n[0],\dots, w_n[\ell_n-1])$ with $M_{w[i]w[i+1]} > 0$ and each letter $w[i]$, $i < \ell_n-1$, has a unique predecessor. Choose a word $w\in W$ whose last letter has a unique predecessor, and let $E(w)$ be the words extending it by one letter. If $W$ defines an extended type, then $W'= (W\backslash \{w\}) \cup E(w)$ also does.

 Equivalently, the sets $W$ defining extended types are the sets of finite words as above such that, for any $w\in W$ and any strict prefix $p$ of $w$, for any letter $a$ that can follow the last letter of $p$, the word $pa$ is a prefix of a word in $W$. One also requires that any letter is a prefix of a word in $W$.

 Given such a set, consider a point $x\in \Gamma$, with some type $w[0]$. If $w[0]$ is in $W$, then $t(x) = (w[0])$. Otherwise, $x$ has a unique predecessor, of some type $w[1]$, and $w[0]w[1]$ is again a prefix of a word in $W$. One can go on until one reaches a word that is in $W$, and this is the $W$-extended type of $x$.

 Here is a last equivalent point of view on sets of extended types: consider the set of all admissible words, and let $\tau$ be a bounded stopping time, i.e., a function such that if $\tau(w) = k$ then for all words $w'$ coinciding with $w$ up to position $k$, $\tau(w') = k$, and moreover if $w$ is a prefix of $w'$ then $\tau(w) \geq \tau(w')$. Then the set $\{w : \tau(w) = |w| \}$ defines an extended type.

 For instance, the set of admissible words of length at most $k$, ending with a letter with several predecessors if length $ < k$, defines an extended type.

 For another example, associate to every letter a weight. The weight of a word is the sum of the weights of its letters. We define $W$ by saying that a word $w$ belongs to $W$ if $weight(w)  >  K$, for some threshold $K$, while this inequality is false for all strict prefixes of $w$ (or, as usual, the last letter of $w$ has several predecessors). Then $W$ defines again an extended type.

 One can also mix those two examples, by stopping when $length(w)= C$ or $weight(w)  >  K$ (or, of course, when a letter has several predecessors).

 Experimentally, it turns out that using the weight $[1,2,3,4]$ gives extended types yielding good bounds on the spectral radius (compared to the size of the matrix it involves).

 ~

 A useful remark is that it is easy to compute the growth and type asymptotics for an extended type. Indeed, let $M_0$ and $A_0$ be the matrix and the growth rates for the  original types (very cheap to compute): for some constant $c$, the  number of points on the sphere $S^n$ with type $i$ is asymptotically $c  A_0(i) growth^n$. Let $w=(w_0\dots w_{k-1})$ be an admissible word.  Points on the sphere $S^n$ with this type are parameterized by a point  on $S^{n-k+1}$ with type $w_{k-1}$ and then a sequence of successors on  the spheres $S^{n-i}$ with type $w_i$. The number of such points is  therefore asymptotically

 \[ c A_0(w_{k-1}) growth^{n-k+1} M_0(w_0, w_1) M_0(w_1, w_2)\dots M_0(w_{k-2}, w_{k-1}).\]

 It follows that the vector $A$ of growth rates is given by

 \[ A(w) = A_0(w_{k-1}) growth^{-k+1} M_0(w_0, w_1) M_0(w_1, w_2)\dots M_0(w_{k-2}, w_{k-1}). \]

 It is very easy to compute if one knows not only the matrix $M$, but  also the words it is coming from. Hence, this computation can be done by the extended type builder.

\begin{Verbatim}[commandchars=\\\{\}]
\PY{k}{class} \PY{n+nc}{ExtendedTypesBuilderWeightLength}\PY{p}{:}
    \PY{l+s+sd}{"""this class constructs the extended type corresponding}
\PY{l+s+sd}{    to its initialization parameters weights, max\PYZus{}weight, length:}
\PY{l+s+sd}{    it constructs all admissible words, stopping when the length }
\PY{l+s+sd}{    of the word becomes equal to length, or when the weight }
\PY{l+s+sd}{    becomes larger than max\PYZus{}weight.}
\PY{l+s+sd}{    }
\PY{l+s+sd}{    matrix() returns the transition matrix for the extended type}
\PY{l+s+sd}{    extended\PYZus{}types() returns all the words in the extended type}
\PY{l+s+sd}{    growth\PYZus{}and\PYZus{}A() returns the growth of spheres in the group,}
\PY{l+s+sd}{    and asymptotics for each type}
\PY{l+s+sd}{    """}
    \PY{k}{def} \PY{n+nf}{\PYZus{}\PYZus{}init\PYZus{}\PYZus{}}\PY{p}{(}\PY{n+nb+bp}{self}\PY{p}{,} \PY{n}{M}\PY{p}{,} \PY{n}{d}\PY{p}{,} \PY{n}{weights}\PY{p}{,} \PY{n}{max\PYZus{}weight}\PY{p}{,} \PY{n}{length}\PY{p}{)}\PY{p}{:}
        \PY{n+nb+bp}{self}\PY{o}{.}\PY{n}{M}            \PY{o}{=} \PY{n}{M}
        \PY{n+nb+bp}{self}\PY{o}{.}\PY{n}{d}            \PY{o}{=} \PY{n}{d}
        \PY{n+nb+bp}{self}\PY{o}{.}\PY{n}{types}        \PY{o}{=} \PY{n+nb+bp}{self}\PY{o}{.}\PY{n}{M}\PY{o}{.}\PY{n}{nrows}\PY{p}{(}\PY{p}{)}
        \PY{n+nb+bp}{self}\PY{o}{.}\PY{n}{predecessors} \PY{o}{=} \PY{p}{[}\PY{n+nb+bp}{self}\PY{o}{.}\PY{n}{d} \PY{o}{-} \PY{n+nb}{sum}\PY{p}{(}\PY{n+nb+bp}{self}\PY{o}{.}\PY{n}{M}\PY{o}{.}\PY{n}{column}\PY{p}{(}\PY{n}{i}\PY{p}{)}\PY{p}{)} \PY{k}{for} \PY{n}{i} \PY{o+ow}{in} \PY{n+nb}{xrange}\PY{p}{(}\PY{n+nb+bp}{self}\PY{o}{.}\PY{n}{types}\PY{p}{)}\PY{p}{]}
        \PY{n+nb+bp}{self}\PY{o}{.}\PY{n}{words}        \PY{o}{=} \PY{n+nb+bp}{None}
        \PY{n+nb+bp}{self}\PY{o}{.}\PY{n}{weights}      \PY{o}{=} \PY{n}{weights}
        \PY{n+nb+bp}{self}\PY{o}{.}\PY{n}{length}       \PY{o}{=} \PY{n}{length}
        \PY{n+nb+bp}{self}\PY{o}{.}\PY{n}{max\PYZus{}weight}   \PY{o}{=} \PY{n}{max\PYZus{}weight}
        \PY{c}{# compute an upper bound for the length of admissible words}
        \PY{k}{if} \PY{n+nb}{min}\PY{p}{(}\PY{n}{weights}\PY{p}{)} \PY{o}{>} \PY{l+m+mi}{0}\PY{p}{:}
            \PY{n}{a} \PY{o}{=} \PY{n}{ceil}\PY{p}{(}\PY{n}{max\PYZus{}weight}\PY{o}{/}\PY{n+nb}{min}\PY{p}{(}\PY{n}{weights}\PY{p}{)}\PY{p}{)} \PY{o}{+} \PY{l+m+mi}{1}
        \PY{k}{else}\PY{p}{:}
            \PY{n}{a} \PY{o}{=} \PY{n}{length}
        \PY{n+nb+bp}{self}\PY{o}{.}\PY{n}{max\PYZus{}length} \PY{o}{=} \PY{n+nb}{min}\PY{p}{(}\PY{n}{length}\PY{p}{,} \PY{n}{a}\PY{p}{)} \PY{o}{+} \PY{l+m+mi}{5}

    \PY{k}{def} \PY{n+nf}{truncate}\PY{p}{(}\PY{n+nb+bp}{self}\PY{p}{,} \PY{n}{t}\PY{p}{)}\PY{p}{:}
        \PY{l+s+sd}{"""truncates a word t to its admissible part.}
\PY{l+s+sd}{        Only works if all the transitions in t are admissible, i.e.,}
\PY{l+s+sd}{        M[t[i], t[i+1]] > 0 for all i.}
\PY{l+s+sd}{        """}
        \PY{n}{total} \PY{o}{=} \PY{l+m+mi}{0}
        \PY{n}{i}     \PY{o}{=} \PY{l+m+mi}{0}
        \PY{k}{for} \PY{n}{i} \PY{o+ow}{in} \PY{n+nb}{xrange}\PY{p}{(}\PY{n+nb}{min}\PY{p}{(}\PY{n+nb}{len}\PY{p}{(}\PY{n}{t}\PY{p}{)}\PY{p}{,} \PY{n+nb+bp}{self}\PY{o}{.}\PY{n}{length}\PY{p}{)}\PY{p}{)}\PY{p}{:}
            \PY{k}{if} \PY{n+nb+bp}{self}\PY{o}{.}\PY{n}{predecessors}\PY{p}{[}\PY{n}{t}\PY{p}{[}\PY{n}{i}\PY{p}{]}\PY{p}{]} \PY{o}{>} \PY{l+m+mi}{1}\PY{p}{:}
                \PY{k}{break}
            \PY{n}{total} \PY{o}{+}\PY{o}{=} \PY{n+nb+bp}{self}\PY{o}{.}\PY{n}{weights}\PY{p}{[}\PY{n}{t}\PY{p}{[}\PY{n}{i}\PY{p}{]}\PY{p}{]}
            \PY{k}{if} \PY{n}{total} \PY{o}{>} \PY{n+nb+bp}{self}\PY{o}{.}\PY{n}{max\PYZus{}weight}\PY{p}{:}
                \PY{k}{break}
        \PY{k}{return} \PY{n}{t}\PY{p}{[}\PY{p}{:}\PY{n}{i}\PY{o}{+}\PY{l+m+mi}{1}\PY{p}{]}

    \PY{k}{def} \PY{n+nf}{extended\PYZus{}types}\PY{p}{(}\PY{n+nb+bp}{self}\PY{p}{)}\PY{p}{:}
        \PY{l+s+sd}{"""returns a list of all admissible types"""}
        \PY{k}{if} \PY{n+nb+bp}{self}\PY{o}{.}\PY{n}{words} \PY{o+ow}{is} \PY{n+nb+bp}{None}\PY{p}{:}
            \PY{n+nb+bp}{self}\PY{o}{.}\PY{n}{compute\PYZus{}extended\PYZus{}types}\PY{p}{(}\PY{p}{)}
        \PY{k}{return} \PY{n+nb+bp}{self}\PY{o}{.}\PY{n}{words}

    \PY{k}{def} \PY{n+nf}{compute\PYZus{}extended\PYZus{}types}\PY{p}{(}\PY{n+nb+bp}{self}\PY{p}{)}\PY{p}{:}
        \PY{l+s+sd}{"""Puts in self.words a list of all admissible types.}
\PY{l+s+sd}{        The construction starts from a collection of types, }
\PY{l+s+sd}{        and tries to extend them.}
\PY{l+s+sd}{        If they are not extendable, put them in result.}
\PY{l+s+sd}{        Otherwise, go on with the new types.}
\PY{l+s+sd}{        """}
        \PY{n}{result} \PY{o}{=} \PY{p}{[}\PY{p}{]}
        \PY{n}{current\PYZus{}pool} \PY{o}{=} \PY{p}{[}\PY{p}{(}\PY{p}{(}\PY{n}{i}\PY{p}{,}\PY{p}{)}\PY{p}{,} \PY{n+nb+bp}{self}\PY{o}{.}\PY{n}{weights}\PY{p}{[}\PY{n}{i}\PY{p}{]}\PY{p}{)} \PY{k}{for} \PY{n}{i} \PY{o+ow}{in} \PY{n+nb}{xrange}\PY{p}{(}\PY{n+nb+bp}{self}\PY{o}{.}\PY{n}{types}\PY{p}{)}\PY{p}{]}
        \PY{n}{length}       \PY{o}{=} \PY{l+m+mi}{1}
        \PY{k}{while} \PY{n}{length} \PY{o}{<} \PY{n+nb+bp}{self}\PY{o}{.}\PY{n}{length}\PY{p}{:}
            \PY{n}{length}  \PY{o}{+}\PY{o}{=} \PY{l+m+mi}{1}
            \PY{n}{temp\PYZus{}pool} \PY{o}{=} \PY{p}{[}\PY{p}{(}\PY{n}{t}\PY{o}{+}\PY{p}{(}\PY{n}{i}\PY{p}{,}\PY{p}{)}\PY{p}{,} \PY{n}{w} \PY{o}{+} \PY{n+nb+bp}{self}\PY{o}{.}\PY{n}{weights}\PY{p}{[}\PY{n}{i}\PY{p}{]}\PY{p}{)}
                          \PY{k}{for} \PY{p}{(}\PY{n}{t}\PY{p}{,}\PY{n}{w}\PY{p}{)} \PY{o+ow}{in} \PY{n}{current\PYZus{}pool}
                          \PY{k}{if} \PY{n}{w} \PY{o}{<}\PY{o}{=} \PY{n+nb+bp}{self}\PY{o}{.}\PY{n}{max\PYZus{}weight} \PY{o+ow}{and} \PY{n+nb+bp}{self}\PY{o}{.}\PY{n}{predecessors}\PY{p}{[}\PY{n}{t}\PY{p}{[}\PY{o}{-}\PY{l+m+mi}{1}\PY{p}{]}\PY{p}{]} \PY{o}{<}\PY{o}{=} \PY{l+m+mi}{1}
                          \PY{k}{for} \PY{n}{i} \PY{o+ow}{in} \PY{n+nb}{xrange}\PY{p}{(}\PY{n+nb+bp}{self}\PY{o}{.}\PY{n}{types}\PY{p}{)}
                          \PY{k}{if} \PY{n+nb+bp}{self}\PY{o}{.}\PY{n}{M}\PY{p}{[}\PY{n}{t}\PY{p}{[}\PY{o}{-}\PY{l+m+mi}{1}\PY{p}{]}\PY{p}{,} \PY{n}{i}\PY{p}{]} \PY{o}{>} \PY{l+m+mi}{0}\PY{p}{]}
            \PY{n}{result} \PY{o}{+}\PY{o}{=} \PY{p}{[}\PY{n}{t} \PY{k}{for} \PY{p}{(}\PY{n}{t}\PY{p}{,}\PY{n}{w}\PY{p}{)} \PY{o+ow}{in} \PY{n}{current\PYZus{}pool}
                         \PY{k}{if} \PY{n}{w} \PY{o}{>} \PY{n+nb+bp}{self}\PY{o}{.}\PY{n}{max\PYZus{}weight} \PY{o+ow}{or} \PY{n+nb+bp}{self}\PY{o}{.}\PY{n}{predecessors}\PY{p}{[}\PY{n}{t}\PY{p}{[}\PY{o}{-}\PY{l+m+mi}{1}\PY{p}{]}\PY{p}{]} \PY{o}{>} \PY{l+m+mi}{1}\PY{p}{]}
            \PY{n}{current\PYZus{}pool} \PY{o}{=} \PY{n}{temp\PYZus{}pool}
        \PY{n}{result}    \PY{o}{+}\PY{o}{=} \PY{p}{[}\PY{n}{t} \PY{k}{for} \PY{p}{(}\PY{n}{t}\PY{p}{,}\PY{n}{w}\PY{p}{)} \PY{o+ow}{in} \PY{n}{current\PYZus{}pool}\PY{p}{]}
        \PY{n+nb+bp}{self}\PY{o}{.}\PY{n}{words} \PY{o}{=} \PY{n}{result}

    \PY{k}{def} \PY{n+nf}{growth\PYZus{}and\PYZus{}Azero}\PY{p}{(}\PY{n+nb+bp}{self}\PY{p}{)}\PY{p}{:}
        \PY{l+s+sd}{"""returns the growth and the eigenvalues corresponding to the }
\PY{l+s+sd}{        initial matrix M"""}
        \PY{n}{r} \PY{o}{=} \PY{n}{RhoEstimatorBasic}\PY{p}{(}\PY{n+nb+bp}{self}\PY{o}{.}\PY{n}{M}\PY{p}{,} \PY{n+nb+bp}{self}\PY{o}{.}\PY{n}{d}\PY{p}{)}
        \PY{k}{return} \PY{n}{r}\PY{o}{.}\PY{n}{growth\PYZus{}and\PYZus{}A}\PY{p}{(}\PY{p}{)}

    \PY{k}{def} \PY{n+nf}{growth\PYZus{}and\PYZus{}A}\PY{p}{(}\PY{n+nb+bp}{self}\PY{p}{)}\PY{p}{:}
        \PY{l+s+sd}{"""returns the growth and the asymptotics of extended types"""}
        \PY{n}{growth}\PY{p}{,} \PY{n}{Azero} \PY{o}{=} \PY{n+nb+bp}{self}\PY{o}{.}\PY{n}{growth\PYZus{}and\PYZus{}Azero}\PY{p}{(}\PY{p}{)}
        \PY{n}{types}         \PY{o}{=} \PY{n+nb+bp}{self}\PY{o}{.}\PY{n}{M}\PY{o}{.}\PY{n}{nrows}\PY{p}{(}\PY{p}{)}

        \PY{c}{# precompute all the possible values of A0(w\PYZus{}\PYZob{}k-1\PYZcb{})growth\PYZca{}\PYZob{}-k+1\PYZcb{}}
        \PY{n}{words}      \PY{o}{=} \PY{n+nb+bp}{self}\PY{o}{.}\PY{n}{extended\PYZus{}types}\PY{p}{(}\PY{p}{)}
        \PY{n}{precompute} \PY{o}{=} \PY{p}{[}\PY{p}{[}\PY{l+m+mi}{0} \PY{k}{for} \PY{n}{i} \PY{o+ow}{in} \PY{n+nb}{xrange}\PY{p}{(}\PY{n+nb+bp}{self}\PY{o}{.}\PY{n}{max\PYZus{}length}\PY{p}{)}\PY{p}{]} \PY{k}{for} \PY{n}{j} \PY{o+ow}{in} \PY{n+nb}{xrange}\PY{p}{(}\PY{n}{types}\PY{p}{)}\PY{p}{]}
        \PY{k}{for} \PY{n}{j} \PY{o+ow}{in} \PY{n+nb}{xrange}\PY{p}{(}\PY{n}{types}\PY{p}{)}\PY{p}{:}
            \PY{n}{precompute}\PY{p}{[}\PY{n}{j}\PY{p}{]}\PY{p}{[}\PY{l+m+mi}{0}\PY{p}{]} \PY{o}{=} \PY{n}{Azero}\PY{p}{[}\PY{n}{j}\PY{p}{]}
            \PY{k}{for} \PY{n}{i} \PY{o+ow}{in} \PY{n+nb}{xrange}\PY{p}{(}\PY{l+m+mi}{1}\PY{p}{,} \PY{n+nb+bp}{self}\PY{o}{.}\PY{n}{max\PYZus{}length}\PY{p}{)}\PY{p}{:}
                \PY{n}{precompute}\PY{p}{[}\PY{n}{j}\PY{p}{]}\PY{p}{[}\PY{n}{i}\PY{p}{]} \PY{o}{=} \PY{n}{precompute}\PY{p}{[}\PY{n}{j}\PY{p}{]}\PY{p}{[}\PY{n}{i}\PY{o}{-}\PY{l+m+mi}{1}\PY{p}{]}\PY{o}{/}\PY{n}{growth}

        \PY{c}{# compute all the values of A}
        \PY{n}{A} \PY{o}{=} \PY{p}{[}\PY{l+m+mi}{0} \PY{k}{for} \PY{n}{i} \PY{o+ow}{in} \PY{n+nb}{xrange}\PY{p}{(}\PY{n+nb}{len}\PY{p}{(}\PY{n}{words}\PY{p}{)}\PY{p}{)}\PY{p}{]}
        \PY{k}{for} \PY{p}{(}\PY{n}{i}\PY{p}{,}\PY{n}{t}\PY{p}{)} \PY{o+ow}{in} \PY{n+nb}{enumerate}\PY{p}{(}\PY{n}{words}\PY{p}{)}\PY{p}{:}
            \PY{n}{m} \PY{o}{=} \PY{l+m+mi}{1}
            \PY{k}{for} \PY{n}{j} \PY{o+ow}{in} \PY{n+nb}{xrange}\PY{p}{(}\PY{n+nb}{len}\PY{p}{(}\PY{n}{t}\PY{p}{)}\PY{o}{-}\PY{l+m+mi}{1}\PY{p}{)}\PY{p}{:}
                \PY{n}{m} \PY{o}{*}\PY{o}{=} \PY{n+nb+bp}{self}\PY{o}{.}\PY{n}{M}\PY{p}{[}\PY{n}{t}\PY{p}{[}\PY{n}{j}\PY{p}{]}\PY{p}{,} \PY{n}{t}\PY{p}{[}\PY{n}{j}\PY{o}{+}\PY{l+m+mi}{1}\PY{p}{]}\PY{p}{]}
            \PY{n}{A}\PY{p}{[}\PY{n}{i}\PY{p}{]} \PY{o}{=} \PY{n}{m}\PY{o}{*}\PY{n}{precompute}\PY{p}{[}\PY{n}{t}\PY{p}{[}\PY{o}{-}\PY{l+m+mi}{1}\PY{p}{]}\PY{p}{]}\PY{p}{[}\PY{n+nb}{len}\PY{p}{(}\PY{n}{t}\PY{p}{)}\PY{o}{-}\PY{l+m+mi}{1}\PY{p}{]}
        \PY{k}{return} \PY{n}{growth}\PY{p}{,} \PY{n}{A}

    \PY{k}{def} \PY{n+nf}{successor}\PY{p}{(}\PY{n+nb+bp}{self}\PY{p}{,} \PY{n}{orig}\PY{p}{,} \PY{n}{letter}\PY{p}{)}\PY{p}{:}
        \PY{l+s+sd}{"""returns the admissible type  }
\PY{l+s+sd}{           obtained by adding letter at the beginning of orig"""}
        \PY{k}{return} \PY{n+nb+bp}{self}\PY{o}{.}\PY{n}{truncate}\PY{p}{(}\PY{p}{(}\PY{n}{letter}\PY{p}{,}\PY{p}{)} \PY{o}{+} \PY{n}{orig}\PY{p}{)}

    \PY{k}{def} \PY{n+nf}{matrix}\PY{p}{(}\PY{n+nb+bp}{self}\PY{p}{)}\PY{p}{:}
        \PY{l+s+sd}{"""returns the transition matrix for the extended type"""}
        \PY{n}{new\PYZus{}types}  \PY{o}{=} \PY{n+nb+bp}{self}\PY{o}{.}\PY{n}{extended\PYZus{}types}\PY{p}{(}\PY{p}{)}
        \PY{n}{types\PYZus{}dict} \PY{o}{=} \PY{n+nb}{dict}\PY{p}{(}\PY{n+nb}{zip}\PY{p}{(}\PY{n}{new\PYZus{}types}\PY{p}{,} \PY{n+nb}{range}\PY{p}{(}\PY{n+nb}{len}\PY{p}{(}\PY{n}{new\PYZus{}types}\PY{p}{)}\PY{p}{)}\PY{p}{)}\PY{p}{)}
        \PY{n}{N} \PY{o}{=} \PY{n}{Matrix}\PY{p}{(}\PY{n+nb}{len}\PY{p}{(}\PY{n}{new\PYZus{}types}\PY{p}{)}\PY{p}{,} \PY{n+nb}{len}\PY{p}{(}\PY{n}{new\PYZus{}types}\PY{p}{)}\PY{p}{,} \PY{l+m+mi}{0}\PY{p}{,} \PY{n}{sparse} \PY{o}{=} \PY{n+nb+bp}{True}\PY{p}{)}
        \PY{k}{for} \PY{n}{orig} \PY{o+ow}{in} \PY{n}{new\PYZus{}types}\PY{p}{:}
            \PY{k}{for} \PY{n}{letter} \PY{o+ow}{in} \PY{n+nb}{xrange}\PY{p}{(}\PY{n+nb+bp}{self}\PY{o}{.}\PY{n}{types}\PY{p}{)}\PY{p}{:}
                \PY{k}{if} \PY{n+nb+bp}{self}\PY{o}{.}\PY{n}{M}\PY{p}{[}\PY{n}{letter}\PY{p}{,} \PY{n}{orig}\PY{p}{[}\PY{l+m+mi}{0}\PY{p}{]}\PY{p}{]} \PY{o}{>} \PY{l+m+mi}{0}\PY{p}{:}
                    \PY{n}{s} \PY{o}{=} \PY{n+nb+bp}{self}\PY{o}{.}\PY{n}{successor}\PY{p}{(}\PY{n}{orig}\PY{p}{,} \PY{n}{letter}\PY{p}{)}
                    \PY{n}{N}\PY{p}{[}\PY{n}{types\PYZus{}dict}\PY{p}{[}\PY{n}{s}\PY{p}{]}\PY{p}{,} \PY{n}{types\PYZus{}dict}\PY{p}{[}\PY{n}{orig}\PY{p}{]}\PY{p}{]} \PY{o}{=} \PY{n+nb+bp}{self}\PY{o}{.}\PY{n}{M}\PY{p}{[}\PY{n}{letter}\PY{p}{,} \PY{n}{orig}\PY{p}{[}\PY{l+m+mi}{0}\PY{p}{]}\PY{p}{]}
        \PY{k}{return} \PY{n}{N}


\PY{k}{class} \PY{n+nc}{ExtendedTypesBuilderLength}\PY{p}{(}\PY{n}{ExtendedTypesBuilderWeightLength}\PY{p}{)}\PY{p}{:}
    \PY{l+s+sd}{"""Extended type builder that only selects admissible}
\PY{l+s+sd}{    words through their length"""}
    \PY{k}{def} \PY{n+nf}{\PYZus{}\PYZus{}init\PYZus{}\PYZus{}}\PY{p}{(}\PY{n+nb+bp}{self}\PY{p}{,} \PY{n}{M}\PY{p}{,} \PY{n}{d}\PY{p}{,} \PY{n}{length}\PY{p}{)}\PY{p}{:}
        \PY{n}{ExtendedTypesBuilderWeightLength}\PY{o}{.}\PY{n}{\PYZus{}\PYZus{}init\PYZus{}\PYZus{}}\PY{p}{(}\PY{n+nb+bp}{self}\PY{p}{,}
                 \PY{n}{M}\PY{p}{,} \PY{n}{d}\PY{p}{,} \PY{p}{[}\PY{l+m+mi}{0} \PY{k}{for} \PY{n}{i} \PY{o+ow}{in} \PY{n+nb}{xrange}\PY{p}{(}\PY{n}{M}\PY{o}{.}\PY{n}{nrows}\PY{p}{(}\PY{p}{)}\PY{p}{)}\PY{p}{]}\PY{p}{,} \PY{l+m+mi}{1}\PY{p}{,} \PY{n}{length}\PY{p}{)}


\PY{k}{class} \PY{n+nc}{ExtendedTypesBuilderWeight}\PY{p}{(}\PY{n}{ExtendedTypesBuilderWeightLength}\PY{p}{)}\PY{p}{:}
    \PY{l+s+sd}{"""Extended type builder that only selects admissible}
\PY{l+s+sd}{    words through their weight"""}
    \PY{k}{def} \PY{n+nf}{\PYZus{}\PYZus{}init\PYZus{}\PYZus{}}\PY{p}{(}\PY{n+nb+bp}{self}\PY{p}{,} \PY{n}{M}\PY{p}{,} \PY{n}{d}\PY{p}{,} \PY{n}{weights}\PY{p}{,} \PY{n}{max\PYZus{}weight}\PY{p}{)}\PY{p}{:}
        \PY{n}{ExtendedTypesBuilderWeightLength}\PY{o}{.}\PY{n}{\PYZus{}\PYZus{}init\PYZus{}\PYZus{}}\PY{p}{(}\PY{n+nb+bp}{self}\PY{p}{,}
                 \PY{n}{M}\PY{p}{,} \PY{n}{d}\PY{p}{,} \PY{n}{weights}\PY{p}{,} \PY{n}{max\PYZus{}weight}\PY{p}{,} \PY{l+m+mi}{10000}\PY{p}{)}
\end{Verbatim}

 In surface groups, one can estimate from below the spectral radius by using $k$-truncated extended types instead of types: the above algorithm still applies. Moreover, if $k$ is larger, one distinguishes more categories of points, and one may therefore expect better bounds on $\rho$. However, the size of the transition matrix increases, which means that computations are more and more intensive...

\begin{Verbatim}[commandchars=\\\{\}]
\PY{n}{genus} \PY{o}{=} \PY{l+m+mi}{2}
\PY{k}{for} \PY{n}{max\PYZus{}length} \PY{o+ow}{in} \PY{n+nb}{range}\PY{p}{(}\PY{l+m+mi}{1}\PY{p}{,}\PY{l+m+mi}{8}\PY{p}{,}\PY{l+m+mi}{2}\PY{p}{)}\PY{p}{:}
    \PY{n}{M} \PY{o}{=} \PY{n}{ExtendedTypesBuilderLength}\PY{p}{(}\PY{n}{Msurface}\PY{p}{(}\PY{n}{genus}\PY{p}{)}\PY{p}{,} \PY{l+m+mi}{4}\PY{o}{*}\PY{n}{genus}\PY{p}{,} \PY{n}{max\PYZus{}length}\PY{p}{)}\PY{o}{.}\PY{n}{matrix}\PY{p}{(}\PY{p}{)}
    \PY{k}{print} \PY{l+s}{"}\PY{l+s}{genus = }\PY{l+s}{"}\PY{p}{,} \PY{n}{genus}\PY{p}{,} \PY{l+s}{"}\PY{l+s}{, max\PYZus{}length = }\PY{l+s}{"}\PY{p}{,} \PY{n}{max\PYZus{}length}\PY{p}{,}\PYZbs{}
          \PY{l+s}{"}\PY{l+s}{, matrix size = }\PY{l+s}{"}\PY{p}{,} \PY{n}{M}\PY{o}{.}\PY{n}{nrows}\PY{p}{(}\PY{p}{)}
    \PY{n}{time} \PY{n}{rho} \PY{o}{=} \PY{n}{RhoEstimatorBasic}\PY{p}{(}\PY{n}{M}\PY{p}{,} \PY{l+m+mi}{4}\PY{o}{*}\PY{n}{genus}\PY{p}{)}\PY{o}{.}\PY{n}{estimate}\PY{p}{(}\PY{p}{)}
    \PY{k}{print} \PY{l+s}{"}\PY{l+s}{estimate = }\PY{l+s}{"}\PY{p}{,} \PY{n}{rho}\PY{p}{,} \PY{l+s}{"}\PY{l+s+se}{\PYZbs{}n}\PY{l+s}{"}
\end{Verbatim}
\begin{Verbatim}[formatcom=\color{blue}]
genus =  2 , max_length =  1 , matrix size =  4
Time: CPU 0.00 s, Wall: 0.02 s
estimate =  0.662477976598

genus =  2 , max_length =  3 , matrix size =  25
Time: CPU 0.01 s, Wall: 0.11 s
estimate =  0.6626394462

genus =  2 , max_length =  5 , matrix size =  148
Time: CPU 0.16 s, Wall: 0.18 s
estimate =  0.662694226446

genus =  2 , max_length =  7 , matrix size =  865
Time: CPU 12.27 s, Wall: 7.83 s
estimate =  0.662720574395
\end{Verbatim}

 There are two bottlenecks in the previous computations, when the matrix size increases: the computation of the growth rates $A$, which is done by the RhoEstimatorBasic class, and the computation of the maximal expansion of the matrix $M'$. We will now explain how to optimize those. The first issue is easy to deal with, since we have already observed that $A$ can be cheaply computed by the extended types builder itself.

 Let us now deal with the maximal expansion of $M'$, i.e., the dominating eigenvalue of $Msym = (M' + (M')^*))/2$. Luckily, we do not need to compute all the eigenvalues, contrary to what we did before. Since this is a positive matrix, if one starts with a positive vector and iterates $Msym$, one converges exponentially fast to the eigenvector for the dominating eigenvalue. In particular, $\|Msym^{n+1} v\|/\|Msym^n v\|$ converges to the dominating eigenvalue (and one checks easily that this sequence is non-decreasing, see Woess, Corollary 10.2). This gives a simple algorithm to estimate the dominating eigenvalue from below.

\begin{Verbatim}[commandchars=\\\{\}]
\PY{k}{class} \PY{n+nc}{RhoEstimator}\PY{p}{(}\PY{n}{RhoEstimatorBasic}\PY{p}{)}\PY{p}{:}
    \PY{l+s+sd}{"""In this class, based on RhoEstimator, the computation of the maximal}
\PY{l+s+sd}{    eigenvalue of Msym is made using a simple iterative procedure."""}
    \PY{k}{def} \PY{n+nf}{max\PYZus{}expansion}\PY{p}{(}\PY{n+nb+bp}{self}\PY{p}{,} \PY{n}{M}\PY{p}{,} \PY{n}{precision} \PY{o}{=} \PY{l+m+mi}{10}\PY{o}{\PYZca{}}\PY{p}{(}\PY{o}{-}\PY{l+m+mi}{50}\PY{p}{)}\PY{p}{)}\PY{p}{:}
        \PY{l+s+sd}{"""M is a matrix with nonnegative entries.}
\PY{l+s+sd}{        returns a number that is less than or equal to the}
\PY{l+s+sd}{        maximum of (q, M q) for q in the unit sphere. }
\PY{l+s+sd}{        Computations are done within the given precision.}
\PY{l+s+sd}{        """}
        \PY{c}{# Sparse matrices with RDF coefficients have no specific}
        \PY{c}{# implementation in Sage 5.10, and are very slow.}
        \PY{c}{# We provide a custom cython implementation, at the end of}
        \PY{c}{# this worksheet for clarity.}
        \PY{k}{if} \PY{n}{M}\PY{o}{.}\PY{n}{is\PYZus{}sparse}\PY{p}{(}\PY{p}{)}\PY{p}{:}
            \PY{k}{return} \PY{n}{max\PYZus{}expansion\PYZus{}sparse\PYZus{}double\PYZus{}cython}\PY{p}{(}\PY{n}{M}\PY{p}{,} \PY{l+m+mi}{10}\PY{o}{\PYZca{}}\PY{p}{(}\PY{o}{-}\PY{l+m+mi}{50}\PY{p}{)}\PY{p}{)}

        \PY{n}{u} \PY{o}{=} \PY{n}{vector}\PY{p}{(}\PY{n}{M}\PY{o}{.}\PY{n}{base\PYZus{}ring}\PY{p}{(}\PY{p}{)}\PY{p}{,} \PY{p}{[}\PY{l+m+mi}{1} \PY{k}{for} \PY{n}{i} \PY{o+ow}{in} \PY{n+nb}{xrange}\PY{p}{(}\PY{n}{M}\PY{o}{.}\PY{n}{nrows}\PY{p}{(}\PY{p}{)}\PY{p}{)}\PY{p}{]}\PY{p}{)}
        \PY{n}{u} \PY{o}{=} \PY{n}{u}\PY{o}{/}\PY{n}{u}\PY{o}{.}\PY{n}{norm}\PY{p}{(}\PY{p}{)}
        \PY{n}{expansion} \PY{o}{=} \PY{l+m+mi}{0}
        \PY{n}{Msym} \PY{o}{=} \PY{p}{(}\PY{n}{M} \PY{o}{+} \PY{n}{M}\PY{o}{.}\PY{n}{transpose}\PY{p}{(}\PY{p}{)}\PY{p}{)}\PY{o}{/}\PY{l+m+mi}{2}
        \PY{k}{while} \PY{n+nb+bp}{True}\PY{p}{:}
            \PY{n}{v}         \PY{o}{=} \PY{n}{Msym} \PY{o}{*} \PY{n}{u}
            \PY{n}{vnorm}     \PY{o}{=} \PY{n}{v}\PY{o}{.}\PY{n}{norm}\PY{p}{(}\PY{p}{)}
            \PY{n}{diff}      \PY{o}{=} \PY{n}{vnorm} \PY{o}{-} \PY{n}{expansion}
            \PY{n}{expansion} \PY{o}{=} \PY{n}{vnorm}
            \PY{n}{u}         \PY{o}{=} \PY{n}{v}\PY{o}{/}\PY{n}{vnorm}
            \PY{k}{if} \PY{n}{diff} \PY{o}{<} \PY{n}{precision}\PY{p}{:}
                \PY{k}{return} \PY{n}{expansion}


\PY{k}{class} \PY{n+nc}{RhoEstimatorExt}\PY{p}{(}\PY{n}{RhoEstimator}\PY{p}{)}\PY{p}{:}
    \PY{l+s+sd}{"""this estimator takes as a parameter the extended types builder.}
\PY{l+s+sd}{    Hence, type asymptotics are readily computed"""}
    \PY{k}{def} \PY{n+nf}{\PYZus{}\PYZus{}init\PYZus{}\PYZus{}}\PY{p}{(}\PY{n+nb+bp}{self}\PY{p}{,} \PY{n}{builder}\PY{p}{)}\PY{p}{:}
        \PY{n+nb+bp}{self}\PY{o}{.}\PY{n}{builder} \PY{o}{=} \PY{n}{builder}
        \PY{n}{RhoEstimator}\PY{o}{.}\PY{n}{\PYZus{}\PYZus{}init\PYZus{}\PYZus{}}\PY{p}{(}\PY{n+nb+bp}{self}\PY{p}{,} \PY{n+nb+bp}{self}\PY{o}{.}\PY{n}{builder}\PY{o}{.}\PY{n}{matrix}\PY{p}{(}\PY{p}{)}\PY{p}{,} \PY{n+nb+bp}{self}\PY{o}{.}\PY{n}{builder}\PY{o}{.}\PY{n}{d}\PY{p}{)}

    \PY{k}{def} \PY{n+nf}{growth\PYZus{}and\PYZus{}A}\PY{p}{(}\PY{n+nb+bp}{self}\PY{p}{)}\PY{p}{:}
        \PY{k}{return} \PY{n+nb+bp}{self}\PY{o}{.}\PY{n}{builder}\PY{o}{.}\PY{n}{growth\PYZus{}and\PYZus{}A}\PY{p}{(}\PY{p}{)}


\PY{k}{def} \PY{n+nf}{test\PYZus{}length}\PY{p}{(}\PY{n}{genus}\PY{p}{,} \PY{n}{max\PYZus{}length}\PY{p}{)}\PY{p}{:}
    \PY{n}{e} \PY{o}{=} \PY{n}{ExtendedTypesBuilderLength}\PY{p}{(}\PY{n}{Msurface}\PY{p}{(}\PY{n}{genus}\PY{p}{)}\PY{p}{,} \PY{l+m+mi}{4}\PY{o}{*}\PY{n}{genus}\PY{p}{,} \PY{n}{max\PYZus{}length}\PY{p}{)}
    \PY{n}{r} \PY{o}{=} \PY{n}{RhoEstimatorExt}\PY{p}{(}\PY{n}{e}\PY{p}{)}
    \PY{k}{print} \PY{l+s}{"}\PY{l+s}{genus = }\PY{l+s}{"}\PY{p}{,} \PY{n}{genus}\PY{p}{,} \PY{l+s}{"}\PY{l+s}{, max\PYZus{}length = }\PY{l+s}{"}\PY{p}{,} \PY{n}{max\PYZus{}length}\PY{p}{,}\PYZbs{}
          \PY{l+s}{"}\PY{l+s}{, matrix size = }\PY{l+s}{"}\PY{p}{,} \PY{n}{r}\PY{o}{.}\PY{n}{M}\PY{o}{.}\PY{n}{nrows}\PY{p}{(}\PY{p}{)}
    \PY{k}{print} \PY{l+s}{"}\PY{l+s}{estimate = }\PY{l+s}{"}\PY{p}{,} \PY{n}{r}\PY{o}{.}\PY{n}{estimate}\PY{p}{(}\PY{p}{)}

\PY{k}{def} \PY{n+nf}{test\PYZus{}weight}\PY{p}{(}\PY{n}{genus}\PY{p}{,} \PY{n}{weights}\PY{p}{,} \PY{n}{max\PYZus{}weight}\PY{p}{)}\PY{p}{:}
    \PY{n}{e} \PY{o}{=} \PY{n}{ExtendedTypesBuilderWeight}\PY{p}{(}\PY{n}{Msurface}\PY{p}{(}\PY{n}{genus}\PY{p}{)}\PY{p}{,} \PY{l+m+mi}{4}\PY{o}{*}\PY{n}{genus}\PY{p}{,} \PY{n}{weights}\PY{p}{,} \PY{n}{max\PYZus{}weight}\PY{p}{)}
    \PY{n}{r} \PY{o}{=} \PY{n}{RhoEstimatorExt}\PY{p}{(}\PY{n}{e}\PY{p}{)}
    \PY{k}{print} \PY{l+s}{"}\PY{l+s}{genus = }\PY{l+s}{"}\PY{p}{,} \PY{n}{genus}\PY{p}{,} \PY{l+s}{"}\PY{l+s}{, max\PYZus{}weight = }\PY{l+s}{"}\PY{p}{,} \PY{n}{max\PYZus{}weight}\PY{p}{,}\PYZbs{}
          \PY{l+s}{"}\PY{l+s}{, matrix size = }\PY{l+s}{"}\PY{p}{,} \PY{n}{r}\PY{o}{.}\PY{n}{M}\PY{o}{.}\PY{n}{nrows}\PY{p}{(}\PY{p}{)}
    \PY{k}{print} \PY{l+s}{"}\PY{l+s}{estimate = }\PY{l+s}{"}\PY{p}{,} \PY{n}{r}\PY{o}{.}\PY{n}{estimate}\PY{p}{(}\PY{p}{)}
\end{Verbatim}

\begin{Verbatim}[commandchars=\\\{\}]
\PY{k}{for} \PY{n}{max\PYZus{}length} \PY{o+ow}{in} \PY{n+nb}{range}\PY{p}{(}\PY{l+m+mi}{1}\PY{p}{,}\PY{l+m+mi}{8}\PY{p}{,}\PY{l+m+mi}{2}\PY{p}{)}\PY{p}{:}
    \PY{n}{time} \PY{n}{test\PYZus{}length}\PY{p}{(}\PY{l+m+mi}{2}\PY{p}{,} \PY{n}{max\PYZus{}length}\PY{p}{)}
    \PY{k}{print} \PY{l+s}{"}\PY{l+s}{"}
\end{Verbatim}
\begin{Verbatim}[formatcom=\color{blue}]
genus =  2 , max_length =  1 , matrix size =  4
estimate =  0.662477976598
Time: CPU 0.00 s, Wall: 0.01 s

genus =  2 , max_length =  3 , matrix size =  25
estimate =  0.6626394462
Time: CPU 0.00 s, Wall: 0.00 s

genus =  2 , max_length =  5 , matrix size =  148
estimate =  0.662694226446
Time: CPU 0.01 s, Wall: 0.01 s

genus =  2 , max_length =  7 , matrix size =  865
estimate =  0.662720574395
Time: CPU 0.06 s, Wall: 0.06 s
\end{Verbatim}

 Now that we have a fast enough algorithm, let us compare what we get using lengths or using weights.

\begin{Verbatim}[commandchars=\\\{\}]
\PY{k}{for} \PY{n}{max\PYZus{}weight} \PY{o+ow}{in} \PY{n+nb}{range}\PY{p}{(}\PY{l+m+mi}{2}\PY{p}{,}\PY{l+m+mi}{9}\PY{p}{,}\PY{l+m+mi}{2}\PY{p}{)}\PY{p}{:}
    \PY{n}{time} \PY{n}{test\PYZus{}weight}\PY{p}{(}\PY{l+m+mi}{2}\PY{p}{,} \PY{p}{[}\PY{l+m+mi}{1}\PY{p}{,}\PY{l+m+mi}{2}\PY{p}{,}\PY{l+m+mi}{3}\PY{p}{,}\PY{l+m+mi}{4}\PY{p}{]}\PY{p}{,} \PY{n}{max\PYZus{}weight}\PY{p}{)}
    \PY{k}{print} \PY{l+s}{"}\PY{l+s}{"}
\end{Verbatim}
\begin{Verbatim}[formatcom=\color{blue}]
genus =  2 , max_weight =  2 , matrix size =  13
estimate =  0.662607354086
Time: CPU 0.01 s, Wall: 0.01 s

genus =  2 , max_weight =  4 , matrix size =  37
estimate =  0.662663626794
Time: CPU 0.01 s, Wall: 0.01 s

genus =  2 , max_weight =  6 , matrix size =  109
estimate =  0.66269793275
Time: CPU 0.02 s, Wall: 0.01 s

genus =  2 , max_weight =  8 , matrix size =  319
estimate =  0.662717774996
Time: CPU 0.03 s, Wall: 0.03 s
\end{Verbatim}

 At comparable matrix size, the estimates with weights are better than the corresponding estimates with length. For instance, a matrix size of 109 generated using weights gives a better estimate than a matrix size of 148 generated using length. This is not surprising, since using weights separates more those points that are more typical. To get the best possible estimates, we will therefore use weights (and very large matrices!)

\begin{Verbatim}[commandchars=\\\{\}]
\PY{n}{time} \PY{n}{test\PYZus{}weight}\PY{p}{(}\PY{n}{genus} \PY{o}{=} \PY{l+m+mi}{2}\PY{p}{,} \PY{n}{weights} \PY{o}{=} \PY{p}{[}\PY{l+m+mi}{1}\PY{p}{,}\PY{l+m+mi}{2}\PY{p}{,}\PY{l+m+mi}{3}\PY{p}{,}\PY{l+m+mi}{4}\PY{p}{]}\PY{p}{,} \PY{n}{max\PYZus{}weight} \PY{o}{=} \PY{l+m+mi}{25}\PY{p}{)}
\end{Verbatim}
\begin{Verbatim}[formatcom=\color{blue}]
genus =  2 , max_weight =  25 , matrix size =  2774629
estimate =  0.66275789907
Time: CPU 2153.67 s, Wall: 2159.49 s
\end{Verbatim}

\begin{Verbatim}[commandchars=\\\{\}]
\PY{n}{time} \PY{n}{test\PYZus{}weight}\PY{p}{(}\PY{n}{genus} \PY{o}{=} \PY{l+m+mi}{3}\PY{p}{,} \PY{n}{weights} \PY{o}{=} \PY{p}{[}\PY{l+m+mi}{1}\PY{p}{,}\PY{l+m+mi}{2}\PY{p}{,}\PY{l+m+mi}{3}\PY{p}{,}\PY{l+m+mi}{4}\PY{p}{,}\PY{l+m+mi}{5}\PY{p}{,}\PY{l+m+mi}{6}\PY{p}{]}\PY{p}{,} \PY{n}{max\PYZus{}weight} \PY{o}{=} \PY{l+m+mi}{24}\PY{p}{)}
\end{Verbatim}
\begin{Verbatim}[formatcom=\color{blue}]
genus =  3 , max_weight =  24 , matrix size =  2943021
estimate =  0.552773556459
Time: CPU 510.54 s, Wall: 511.90 s
\end{Verbatim}

\begin{Verbatim}[commandchars=\\\{\}]
\PY{n}{time} \PY{n}{test\PYZus{}weight}\PY{p}{(}\PY{n}{genus} \PY{o}{=} \PY{l+m+mi}{4}\PY{p}{,} \PY{n}{weights} \PY{o}{=} \PY{p}{[}\PY{l+m+mi}{1}\PY{p}{,}\PY{l+m+mi}{2}\PY{p}{,}\PY{l+m+mi}{3}\PY{p}{,}\PY{l+m+mi}{4}\PY{p}{,}\PY{l+m+mi}{5}\PY{p}{,}\PY{l+m+mi}{6}\PY{p}{,}\PY{l+m+mi}{7}\PY{p}{,}\PY{l+m+mi}{8}\PY{p}{]}\PY{p}{,} \PY{n}{max\PYZus{}weight} \PY{o}{=} \PY{l+m+mi}{24}\PY{p}{)}
\end{Verbatim}
\begin{Verbatim}[formatcom=\color{blue}]
genus =  4 , max_weight =  24 , matrix size =  4120495
estimate =  0.484122920682
Time: CPU 745.34 s, Wall: 747.25 s
\end{Verbatim}

\begin{Verbatim}[commandchars=\\\{\}]
\PY{k}{def} \PY{n+nf}{MsurfaceExt}\PY{p}{(}\PY{n}{g}\PY{p}{)}\PY{p}{:}
    \PY{l+s+sd}{"""Returns a (2g+2 times 2g+2) matrix M.}
\PY{l+s+sd}{    M[i,j] is the number of successors of modified type i of }
\PY{l+s+sd}{    a point of modified type j, in the surface group Gamma\PYZus{}g.}
\PY{l+s+sd}{    Compared to Cannon, we add two types 1prime and 2prime, corresponding to}
\PY{l+s+sd}{    the successors of a point of type 2g-1 that can be in a loop with}
\PY{l+s+sd}{    further non-uniqueness.}
\PY{l+s+sd}{       }
\PY{l+s+sd}{    The correspondance between the types and the matrix indices follows:}
\PY{l+s+sd}{           type i      <-> index i-1}
\PY{l+s+sd}{           type 1prime <-> index 2*g}
\PY{l+s+sd}{           type 2prime <-> index 2*g+1}
\PY{l+s+sd}{    We will also add later an "ambiguous" type, with index 2*g+2}
\PY{l+s+sd}{    """}
    \PY{n}{M} \PY{o}{=} \PY{n}{Matrix}\PY{p}{(}\PY{l+m+mi}{2}\PY{o}{*}\PY{n}{g}\PY{o}{+}\PY{l+m+mi}{2}\PY{p}{,} \PY{l+m+mi}{2}\PY{o}{*}\PY{n}{g}\PY{o}{+}\PY{l+m+mi}{2}\PY{p}{,} \PY{l+m+mi}{0}\PY{p}{)}

    \PY{k}{for} \PY{n}{j} \PY{o+ow}{in} \PY{n+nb}{xrange}\PY{p}{(}\PY{l+m+mi}{0}\PY{p}{,} \PY{l+m+mi}{2}\PY{o}{*}\PY{n}{g}\PY{o}{-}\PY{l+m+mi}{2}\PY{p}{)}\PY{p}{:}
        \PY{n}{M}\PY{p}{[}\PY{l+m+mi}{0}  \PY{p}{,}\PY{n}{j}\PY{p}{]}  \PY{o}{=} \PY{l+m+mi}{4}\PY{o}{*}\PY{n}{g}\PY{o}{-}\PY{l+m+mi}{3}
        \PY{n}{M}\PY{p}{[}\PY{l+m+mi}{1}  \PY{p}{,}\PY{n}{j}\PY{p}{]}  \PY{o}{=} \PY{l+m+mi}{1}
        \PY{n}{M}\PY{p}{[}\PY{n}{j}\PY{o}{+}\PY{l+m+mi}{1}\PY{p}{,}\PY{n}{j}\PY{p}{]} \PY{o}{+}\PY{o}{=} \PY{l+m+mi}{1}

    \PY{n}{M}\PY{p}{[}\PY{l+m+mi}{0}\PY{p}{,}     \PY{l+m+mi}{2}\PY{o}{*}\PY{n}{g}\PY{o}{-}\PY{l+m+mi}{2}\PY{p}{]} \PY{o}{=} \PY{l+m+mi}{4}\PY{o}{*}\PY{n}{g}\PY{o}{-}\PY{l+m+mi}{4}
    \PY{n}{M}\PY{p}{[}\PY{l+m+mi}{2}\PY{o}{*}\PY{n}{g}\PY{p}{,}   \PY{l+m+mi}{2}\PY{o}{*}\PY{n}{g}\PY{o}{-}\PY{l+m+mi}{2}\PY{p}{]} \PY{o}{=} \PY{l+m+mi}{1}
    \PY{n}{M}\PY{p}{[}\PY{l+m+mi}{1}\PY{p}{,}     \PY{l+m+mi}{2}\PY{o}{*}\PY{n}{g}\PY{o}{-}\PY{l+m+mi}{2}\PY{p}{]} \PY{o}{=} \PY{l+m+mi}{1}
    \PY{n}{M}\PY{p}{[}\PY{l+m+mi}{2}\PY{o}{*}\PY{n}{g}\PY{o}{-}\PY{l+m+mi}{1}\PY{p}{,} \PY{l+m+mi}{2}\PY{o}{*}\PY{n}{g}\PY{o}{-}\PY{l+m+mi}{2}\PY{p}{]} \PY{o}{=} \PY{l+m+mi}{1}

    \PY{n}{M}\PY{p}{[}\PY{l+m+mi}{0}\PY{p}{,} \PY{l+m+mi}{2}\PY{o}{*}\PY{n}{g}\PY{o}{-}\PY{l+m+mi}{1}\PY{p}{]} \PY{o}{=} \PY{l+m+mi}{4}\PY{o}{*}\PY{n}{g}\PY{o}{-}\PY{l+m+mi}{4}
    \PY{n}{M}\PY{p}{[}\PY{l+m+mi}{1}\PY{p}{,} \PY{l+m+mi}{2}\PY{o}{*}\PY{n}{g}\PY{o}{-}\PY{l+m+mi}{1}\PY{p}{]} \PY{o}{=} \PY{l+m+mi}{2}

    \PY{n}{M}\PY{p}{[}\PY{l+m+mi}{0}\PY{p}{,}     \PY{l+m+mi}{2}\PY{o}{*}\PY{n}{g}\PY{p}{]} \PY{o}{=} \PY{l+m+mi}{4}\PY{o}{*}\PY{n}{g}\PY{o}{-}\PY{l+m+mi}{3}
    \PY{n}{M}\PY{p}{[}\PY{l+m+mi}{1}\PY{p}{,}     \PY{l+m+mi}{2}\PY{o}{*}\PY{n}{g}\PY{p}{]} \PY{o}{=} \PY{l+m+mi}{1}
    \PY{n}{M}\PY{p}{[}\PY{l+m+mi}{2}\PY{o}{*}\PY{n}{g}\PY{o}{+}\PY{l+m+mi}{1}\PY{p}{,} \PY{l+m+mi}{2}\PY{o}{*}\PY{n}{g}\PY{p}{]} \PY{o}{=} \PY{l+m+mi}{1}

    \PY{n}{M}\PY{p}{[}\PY{l+m+mi}{0}\PY{p}{,} \PY{l+m+mi}{2}\PY{o}{*}\PY{n}{g}\PY{o}{+}\PY{l+m+mi}{1}\PY{p}{]} \PY{o}{=} \PY{l+m+mi}{4}\PY{o}{*}\PY{n}{g}\PY{o}{-}\PY{l+m+mi}{3}
    \PY{n}{M}\PY{p}{[}\PY{l+m+mi}{1}\PY{p}{,} \PY{l+m+mi}{2}\PY{o}{*}\PY{n}{g}\PY{o}{+}\PY{l+m+mi}{1}\PY{p}{]} \PY{o}{=} \PY{l+m+mi}{1}
    \PY{n}{M}\PY{p}{[}\PY{l+m+mi}{2}\PY{p}{,} \PY{l+m+mi}{2}\PY{o}{*}\PY{n}{g}\PY{o}{+}\PY{l+m+mi}{1}\PY{p}{]} \PY{o}{=} \PY{l+m+mi}{1}
    \PY{k}{return} \PY{n}{M}


\PY{k}{class} \PY{n+nc}{ExtendedTypesBuilderSurfaceWeightLength}\PY{p}{(}\PY{n}{ExtendedTypesBuilderWeightLength}\PY{p}{)}\PY{p}{:}
    \PY{l+s+sd}{"""This class constructs the extended type for the surface}
\PY{l+s+sd}{    group corresponding to its initialization parameters genus, }
\PY{l+s+sd}{    length, weights:}
\PY{l+s+sd}{    among all words ending at some point, it selects the}
\PY{l+s+sd}{    part that is common to all geodesics ending at that point (with }
\PY{l+s+sd}{    length and  weight at most the initialization parameters), }
\PY{l+s+sd}{    and constructs the types from them.}
\PY{l+s+sd}{    }
\PY{l+s+sd}{    There are 2*g+2 true types, as explained in MsurfaceExt(genus), and one }
\PY{l+s+sd}{    "undetermined" type, corresponding to parts of geodesics that are non-unique. }
\PY{l+s+sd}{    Therefore, weights should be of length 2*g+3. Moreover, the weight of the }
\PY{l+s+sd}{    undetermined type should be maximal among weights, so that extending a geodesic }
\PY{l+s+sd}{    and then possibly replacing some parts by undetermined parts one can only }
\PY{l+s+sd}{    increase the weight, and therefore shorten the part that is selected.}
\PY{l+s+sd}{    }
\PY{l+s+sd}{    matrix() returns the transition matrix for the extended type.}
\PY{l+s+sd}{    extended\PYZus{}types() returns all the words in the extended type.}
\PY{l+s+sd}{    """}
    \PY{k}{def} \PY{n+nf}{\PYZus{}\PYZus{}init\PYZus{}\PYZus{}}\PY{p}{(}\PY{n+nb+bp}{self}\PY{p}{,} \PY{n}{genus}\PY{p}{,} \PY{n}{weights}\PY{p}{,} \PY{n}{max\PYZus{}weight}\PY{p}{,} \PY{n}{length}\PY{p}{)}\PY{p}{:}
        \PY{k}{if} \PY{n+nb}{len}\PY{p}{(}\PY{n}{weights}\PY{p}{)} \PY{o}{!=} \PY{l+m+mi}{2}\PY{o}{*}\PY{n}{genus} \PY{o}{+} \PY{l+m+mi}{3} \PY{o+ow}{or} \PY{n}{weights}\PY{p}{[}\PY{o}{-}\PY{l+m+mi}{1}\PY{p}{]} \PY{o}{!=} \PY{n+nb}{max}\PY{p}{(}\PY{n}{weights}\PY{p}{)}\PY{p}{:}
            \PY{k}{raise} \PY{n+ne}{ValueError}\PY{p}{,} \PY{l+s}{"}\PY{l+s}{incorrect parameters}\PY{l+s}{"}
        \PY{n}{ExtendedTypesBuilderWeightLength}\PY{o}{.}\PY{n}{\PYZus{}\PYZus{}init\PYZus{}\PYZus{}}\PY{p}{(}\PY{n+nb+bp}{self}\PY{p}{,} \PY{n}{MsurfaceExt}\PY{p}{(}\PY{n}{genus}\PY{p}{)}\PY{p}{,}
                 \PY{l+m+mi}{4}\PY{o}{*}\PY{n}{genus}\PY{p}{,} \PY{n}{weights}\PY{p}{,} \PY{n}{max\PYZus{}weight}\PY{p}{,} \PY{n}{length}\PY{p}{)}
        \PY{n+nb+bp}{self}\PY{o}{.}\PY{n}{genus} \PY{o}{=} \PY{n}{genus}
        \PY{n+nb+bp}{self}\PY{o}{.}\PY{n}{words} \PY{o}{=} \PY{n+nb+bp}{None}
        \PY{n+nb+bp}{self}\PY{o}{.}\PY{n}{predecessors} \PY{o}{=} \PY{p}{[}\PY{l+m+mi}{1} \PY{k}{for} \PY{n}{i} \PY{o+ow}{in} \PY{n+nb}{xrange}\PY{p}{(}\PY{l+m+mi}{2}\PY{o}{*}\PY{n}{genus}\PY{o}{+}\PY{l+m+mi}{3}\PY{p}{)}\PY{p}{]}

    \PY{k}{def} \PY{n+nf}{mark\PYZus{}ambiguous}\PY{p}{(}\PY{n+nb+bp}{self}\PY{p}{,} \PY{n}{t}\PY{p}{)}\PY{p}{:}
        \PY{l+s+sd}{"""In the word t, locates the parts that are not common to all }
\PY{l+s+sd}{        geodesics ending by such a word, and replaces them by 2*g+2.}
\PY{l+s+sd}{        Accepts as input a word already with ambiguities.}
\PY{l+s+sd}{        """}
        \PY{n}{u} \PY{o}{=} \PY{n+nb}{list}\PY{p}{(}\PY{n}{t}\PY{p}{)}
        \PY{n}{g} \PY{o}{=} \PY{n+nb+bp}{self}\PY{o}{.}\PY{n}{genus}
        \PY{k}{try}\PY{p}{:}
            \PY{n}{i} \PY{o}{=} \PY{n}{t}\PY{o}{.}\PY{n}{index}\PY{p}{(}\PY{l+m+mi}{2}\PY{o}{*}\PY{n}{g}\PY{o}{-}\PY{l+m+mi}{1}\PY{p}{)}
            \PY{k}{while} \PY{n}{i} \PY{o}{<} \PY{n+nb}{len}\PY{p}{(}\PY{n}{u}\PY{p}{)}\PY{p}{:}
                \PY{k}{for} \PY{n}{j} \PY{o+ow}{in} \PY{n+nb}{xrange}\PY{p}{(}\PY{n}{i}\PY{o}{+}\PY{l+m+mi}{1}\PY{p}{,} \PY{n}{i}\PY{o}{+}\PY{l+m+mi}{2}\PY{o}{*}\PY{n}{g}\PY{p}{)}\PY{p}{:}
                    \PY{n}{u}\PY{p}{[}\PY{n}{j}\PY{p}{]} \PY{o}{=} \PY{l+m+mi}{2}\PY{o}{*}\PY{n}{g}\PY{o}{+}\PY{l+m+mi}{2}
                \PY{n}{i} \PY{o}{=} \PY{n}{i} \PY{o}{+} \PY{l+m+mi}{2}\PY{o}{*}\PY{n}{g}\PY{o}{-}\PY{l+m+mi}{1}
                \PY{k}{if} \PY{o+ow}{not} \PY{p}{(}\PY{n}{t}\PY{p}{[}\PY{n}{i}\PY{p}{]} \PY{o}{==} \PY{l+m+mi}{2}\PY{o}{*}\PY{n}{g}\PY{o}{-}\PY{l+m+mi}{1} \PY{o+ow}{or} \PY{p}{(}\PY{n}{t}\PY{p}{[}\PY{n}{i}\PY{p}{]} \PY{o}{==} \PY{l+m+mi}{2}\PY{o}{*}\PY{n}{g} \PY{o+ow}{and} \PY{n}{t}\PY{p}{[}\PY{n}{i}\PY{o}{-}\PY{l+m+mi}{1}\PY{p}{]} \PY{o}{==} \PY{l+m+mi}{2}\PY{o}{*}\PY{n}{g}\PY{o}{+}\PY{l+m+mi}{1}\PY{p}{)}\PY{p}{)}\PY{p}{:}
                    \PY{n}{i} \PY{o}{=} \PY{n}{t}\PY{o}{.}\PY{n}{index}\PY{p}{(}\PY{l+m+mi}{2}\PY{o}{*}\PY{n}{g}\PY{o}{-}\PY{l+m+mi}{1}\PY{p}{,} \PY{n}{i}\PY{p}{)}
        \PY{k}{except} \PY{p}{(}\PY{n+ne}{ValueError}\PY{p}{,} \PY{n+ne}{IndexError}\PY{p}{)}\PY{p}{:}
            \PY{k}{pass}
        \PY{k}{return} \PY{n+nb}{tuple}\PY{p}{(}\PY{n}{u}\PY{p}{)}

    \PY{k}{def} \PY{n+nf}{truncate}\PY{p}{(}\PY{n+nb+bp}{self}\PY{p}{,} \PY{n}{t}\PY{p}{)}\PY{p}{:}
        \PY{l+s+sd}{"""Truncates a word t to its admissible part.}
\PY{l+s+sd}{        Marks the ambiguous part with type 2g+2 if necessary.}
\PY{l+s+sd}{        """}
        \PY{k}{return} \PY{n}{ExtendedTypesBuilderWeightLength}\PY{o}{.}\PY{n}{truncate}\PY{p}{(}\PY{n+nb+bp}{self}\PY{p}{,}
                                                         \PY{n+nb+bp}{self}\PY{o}{.}\PY{n}{mark\PYZus{}ambiguous}\PY{p}{(}\PY{n}{t}\PY{p}{)}\PY{p}{)}

    \PY{k}{def} \PY{n+nf}{compute\PYZus{}extended\PYZus{}types}\PY{p}{(}\PY{n+nb+bp}{self}\PY{p}{)}\PY{p}{:}
        \PY{l+s+sd}{"""Puts in self.words a list of all admissible types.}
\PY{l+s+sd}{        The construction starts from a collection of types, }
\PY{l+s+sd}{        and tries to extend them.}
\PY{l+s+sd}{        If they are not extendable, put them in result.}
\PY{l+s+sd}{        Otherwise, go on with the new types.}
\PY{l+s+sd}{        """}
        \PY{n}{result}       \PY{o}{=} \PY{p}{[}\PY{p}{]}
        \PY{c}{# in current\PYZus{}pool, store (word, weight of the word, }
        \PY{c}{#    number of ambiguous letters at the end of word modulo 2g-1) }
        \PY{n}{current\PYZus{}pool} \PY{o}{=} \PY{p}{[}\PY{p}{(}\PY{p}{(}\PY{n}{i}\PY{p}{,}\PY{p}{)}\PY{p}{,} \PY{n+nb+bp}{self}\PY{o}{.}\PY{n}{weights}\PY{p}{[}\PY{n}{i}\PY{p}{]}\PY{p}{,} \PY{l+m+mi}{0}\PY{p}{)} \PY{k}{for} \PY{n}{i} \PY{o+ow}{in} \PY{n+nb}{xrange}\PY{p}{(}\PY{n+nb+bp}{self}\PY{o}{.}\PY{n}{types}\PY{p}{)}\PY{p}{]}
        \PY{n}{length}       \PY{o}{=} \PY{l+m+mi}{1}
        \PY{n}{g}            \PY{o}{=} \PY{n+nb+bp}{self}\PY{o}{.}\PY{n}{genus}
        \PY{n}{types}        \PY{o}{=} \PY{n+nb+bp}{self}\PY{o}{.}\PY{n}{types}
        \PY{k}{while} \PY{n}{length} \PY{o}{<} \PY{n+nb+bp}{self}\PY{o}{.}\PY{n}{length}\PY{p}{:}
            \PY{n}{length}   \PY{o}{+}\PY{o}{=} \PY{l+m+mi}{1}
            \PY{n}{temp\PYZus{}pool} \PY{o}{=} \PY{p}{[}\PY{p}{]}
            \PY{k}{for} \PY{p}{(}\PY{n}{t}\PY{p}{,} \PY{n}{w}\PY{p}{,} \PY{n}{undet}\PY{p}{)} \PY{o+ow}{in} \PY{n}{current\PYZus{}pool}\PY{p}{:}
                \PY{k}{if} \PY{n}{w} \PY{o}{>} \PY{n+nb+bp}{self}\PY{o}{.}\PY{n}{max\PYZus{}weight}\PY{p}{:}
                    \PY{n}{result} \PY{o}{+}\PY{o}{=} \PY{p}{[}\PY{n}{t}\PY{p}{]}
                \PY{k}{elif} \PY{n}{t}\PY{p}{[}\PY{o}{-}\PY{l+m+mi}{1}\PY{p}{]} \PY{o}{==} \PY{l+m+mi}{2}\PY{o}{*}\PY{n}{g}\PY{o}{-}\PY{l+m+mi}{1}\PY{p}{:}
                    \PY{n}{temp\PYZus{}pool} \PY{o}{+}\PY{o}{=} \PY{p}{[}\PY{p}{(}\PY{n}{t}\PY{o}{+}\PY{p}{(}\PY{l+m+mi}{2}\PY{o}{*}\PY{n}{g}\PY{o}{+}\PY{l+m+mi}{2}\PY{p}{,}\PY{p}{)}\PY{p}{,} \PY{n}{w}\PY{o}{+}\PY{n+nb+bp}{self}\PY{o}{.}\PY{n}{weights}\PY{p}{[}\PY{l+m+mi}{2}\PY{o}{*}\PY{n}{g}\PY{o}{+}\PY{l+m+mi}{2}\PY{p}{]}\PY{p}{,} \PY{l+m+mi}{1}\PY{p}{)}\PY{p}{]}
                \PY{k}{elif} \PY{n}{t}\PY{p}{[}\PY{o}{-}\PY{l+m+mi}{1}\PY{p}{]} \PY{o}{!=} \PY{l+m+mi}{2}\PY{o}{*}\PY{n}{g}\PY{o}{+}\PY{l+m+mi}{2}\PY{p}{:}
                    \PY{n}{temp\PYZus{}pool} \PY{o}{+}\PY{o}{=} \PY{p}{[}\PY{p}{(}\PY{n}{t}\PY{o}{+}\PY{p}{(}\PY{n}{i}\PY{p}{,}\PY{p}{)}\PY{p}{,} \PY{n}{w} \PY{o}{+} \PY{n+nb+bp}{self}\PY{o}{.}\PY{n}{weights}\PY{p}{[}\PY{n}{i}\PY{p}{]}\PY{p}{,} \PY{l+m+mi}{0}\PY{p}{)}
                                    \PY{k}{for} \PY{n}{i} \PY{o+ow}{in} \PY{n+nb}{xrange}\PY{p}{(}\PY{n}{types}\PY{p}{)}
                                    \PY{k}{if} \PY{n+nb+bp}{self}\PY{o}{.}\PY{n}{M}\PY{p}{[}\PY{n}{t}\PY{p}{[}\PY{o}{-}\PY{l+m+mi}{1}\PY{p}{]}\PY{p}{,} \PY{n}{i}\PY{p}{]} \PY{o}{>} \PY{l+m+mi}{0}\PY{p}{]}
                \PY{k}{else}\PY{p}{:}
                    \PY{c}{# last letter is ambiguous}
                    \PY{n}{undet} \PY{o}{+}\PY{o}{=} \PY{l+m+mi}{1}
                    \PY{k}{if} \PY{n}{undet} \PY{o}{==} \PY{l+m+mi}{2}\PY{o}{*}\PY{n}{g}\PY{p}{:}
                        \PY{c}{# a half-turn around an octagon is finished.}
                        \PY{c}{# the word can be extended with any definite type,}
                        \PY{c}{# or again with an ambiguity.}
                        \PY{n}{temp\PYZus{}pool} \PY{o}{+}\PY{o}{=} \PY{p}{[}\PY{p}{(}\PY{n}{t}\PY{o}{+}\PY{p}{(}\PY{n}{i}\PY{p}{,}\PY{p}{)}\PY{p}{,} \PY{n}{w}\PY{o}{+}\PY{n+nb+bp}{self}\PY{o}{.}\PY{n}{weights}\PY{p}{[}\PY{n}{i}\PY{p}{]}\PY{p}{,} \PY{l+m+mi}{0}\PY{p}{)}
                                      \PY{k}{for} \PY{n}{i} \PY{o+ow}{in} \PY{n+nb}{xrange}\PY{p}{(}\PY{n}{types}\PY{p}{)}\PY{p}{]}
                        \PY{n}{undet} \PY{o}{=} \PY{l+m+mi}{1}
                    \PY{n}{temp\PYZus{}pool} \PY{o}{+}\PY{o}{=} \PY{p}{[}\PY{p}{(}\PY{n}{t}\PY{o}{+}\PY{p}{(}\PY{l+m+mi}{2}\PY{o}{*}\PY{n}{g}\PY{o}{+}\PY{l+m+mi}{2}\PY{p}{,}\PY{p}{)}\PY{p}{,} \PY{n}{w}\PY{o}{+}\PY{n+nb+bp}{self}\PY{o}{.}\PY{n}{weights}\PY{p}{[}\PY{l+m+mi}{2}\PY{o}{*}\PY{n}{g}\PY{o}{+}\PY{l+m+mi}{2}\PY{p}{]}\PY{p}{,} \PY{n}{undet}\PY{p}{)}\PY{p}{]}
            \PY{n}{current\PYZus{}pool} \PY{o}{=} \PY{n}{temp\PYZus{}pool}
        \PY{n}{result} \PY{o}{+}\PY{o}{=} \PY{p}{[}\PY{n}{t} \PY{k}{for} \PY{p}{(}\PY{n}{t}\PY{p}{,}\PY{n}{w}\PY{p}{,} \PY{n}{undet}\PY{p}{)} \PY{o+ow}{in} \PY{n}{current\PYZus{}pool}\PY{p}{]}
        \PY{n+nb+bp}{self}\PY{o}{.}\PY{n}{words} \PY{o}{=} \PY{n}{result}

    \PY{k}{def} \PY{n+nf}{growth\PYZus{}and\PYZus{}A}\PY{p}{(}\PY{n+nb+bp}{self}\PY{p}{)}\PY{p}{:}
        \PY{n}{growth}\PY{p}{,} \PY{n}{Azero} \PY{o}{=} \PY{n+nb+bp}{self}\PY{o}{.}\PY{n}{growth\PYZus{}and\PYZus{}Azero}\PY{p}{(}\PY{p}{)}
        \PY{n}{types}         \PY{o}{=} \PY{n+nb+bp}{self}\PY{o}{.}\PY{n}{types}
        \PY{n}{g}             \PY{o}{=} \PY{n+nb+bp}{self}\PY{o}{.}\PY{n}{genus}
        \PY{n}{words}         \PY{o}{=} \PY{n+nb+bp}{self}\PY{o}{.}\PY{n}{extended\PYZus{}types}\PY{p}{(}\PY{p}{)}

        \PY{c}{# precompute all the possible values of A0(w\PYZus{}\PYZob{}k-1\PYZcb{})growth\PYZca{}\PYZob{}-k+1\PYZcb{}}
        \PY{n}{precompute} \PY{o}{=} \PY{p}{[}\PY{p}{[}\PY{l+m+mi}{0} \PY{k}{for} \PY{n}{i} \PY{o+ow}{in} \PY{n+nb}{xrange}\PY{p}{(}\PY{n+nb+bp}{self}\PY{o}{.}\PY{n}{max\PYZus{}length}\PY{p}{)}\PY{p}{]} \PY{k}{for} \PY{n}{j} \PY{o+ow}{in} \PY{n+nb}{xrange}\PY{p}{(}\PY{n}{types}\PY{p}{)}\PY{p}{]}
        \PY{k}{for} \PY{n}{j} \PY{o+ow}{in} \PY{n+nb}{xrange}\PY{p}{(}\PY{n}{types}\PY{p}{)}\PY{p}{:}
            \PY{n}{precompute}\PY{p}{[}\PY{n}{j}\PY{p}{]}\PY{p}{[}\PY{l+m+mi}{0}\PY{p}{]} \PY{o}{=} \PY{n}{Azero}\PY{p}{[}\PY{n}{j}\PY{p}{]}
            \PY{k}{for} \PY{n}{i} \PY{o+ow}{in} \PY{n+nb}{xrange}\PY{p}{(}\PY{l+m+mi}{1}\PY{p}{,} \PY{n+nb+bp}{self}\PY{o}{.}\PY{n}{max\PYZus{}length}\PY{p}{)}\PY{p}{:}
                \PY{n}{precompute}\PY{p}{[}\PY{n}{j}\PY{p}{]}\PY{p}{[}\PY{n}{i}\PY{p}{]} \PY{o}{=} \PY{n}{precompute}\PY{p}{[}\PY{n}{j}\PY{p}{]}\PY{p}{[}\PY{n}{i}\PY{o}{-}\PY{l+m+mi}{1}\PY{p}{]}\PY{o}{/}\PY{n}{growth}

        \PY{c}{# compute all the values of A}
        \PY{n}{A} \PY{o}{=} \PY{p}{[}\PY{l+m+mi}{0} \PY{k}{for} \PY{n}{i} \PY{o+ow}{in} \PY{n+nb}{xrange}\PY{p}{(}\PY{n+nb}{len}\PY{p}{(}\PY{n}{words}\PY{p}{)}\PY{p}{)}\PY{p}{]}
        \PY{k}{for} \PY{p}{(}\PY{n}{i}\PY{p}{,}\PY{n}{t}\PY{p}{)} \PY{o+ow}{in} \PY{n+nb}{enumerate}\PY{p}{(}\PY{n}{words}\PY{p}{)}\PY{p}{:}
            \PY{n}{m} \PY{o}{=} \PY{l+m+mi}{1}
            \PY{n}{ambiguous\PYZus{}count} \PY{o}{=} \PY{l+m+mi}{0}
            \PY{c}{# find the last non-ambiguous position}
            \PY{k}{for} \PY{n}{true\PYZus{}length} \PY{o+ow}{in} \PY{n+nb}{xrange}\PY{p}{(}\PY{n+nb}{len}\PY{p}{(}\PY{n}{t}\PY{p}{)}\PY{o}{-}\PY{l+m+mi}{1}\PY{p}{,} \PY{o}{-}\PY{l+m+mi}{1}\PY{p}{,} \PY{o}{-}\PY{l+m+mi}{1}\PY{p}{)}\PY{p}{:}
                \PY{k}{if} \PY{n}{t}\PY{p}{[}\PY{n}{true\PYZus{}length}\PY{p}{]} \PY{o}{!=} \PY{l+m+mi}{2}\PY{o}{*}\PY{n}{g}\PY{o}{+}\PY{l+m+mi}{2}\PY{p}{:}
                    \PY{k}{break}
            \PY{c}{# count the multiplicities between 0 and true\PYZus{}length}
            \PY{k}{for} \PY{n}{j} \PY{o+ow}{in} \PY{n+nb}{xrange}\PY{p}{(}\PY{n}{true\PYZus{}length}\PY{p}{)}\PY{p}{:}
                \PY{k}{if} \PY{n}{t}\PY{p}{[}\PY{n}{j}\PY{p}{]} \PY{o}{!=} \PY{l+m+mi}{2}\PY{o}{*}\PY{n}{g}\PY{o}{+}\PY{l+m+mi}{2} \PY{o+ow}{and} \PY{n}{t}\PY{p}{[}\PY{n}{j}\PY{o}{+}\PY{l+m+mi}{1}\PY{p}{]} \PY{o}{!=} \PY{l+m+mi}{2}\PY{o}{*}\PY{n}{g}\PY{o}{+}\PY{l+m+mi}{2}\PY{p}{:}
                    \PY{n}{m} \PY{o}{*}\PY{o}{=} \PY{n+nb+bp}{self}\PY{o}{.}\PY{n}{M}\PY{p}{[}\PY{n}{t}\PY{p}{[}\PY{n}{j}\PY{p}{]}\PY{p}{,} \PY{n}{t}\PY{p}{[}\PY{n}{j}\PY{o}{+}\PY{l+m+mi}{1}\PY{p}{]}\PY{p}{]}
                \PY{k}{elif} \PY{n}{t}\PY{p}{[}\PY{n}{j}\PY{p}{]} \PY{o}{==} \PY{l+m+mi}{2}\PY{o}{*}\PY{n}{g}\PY{o}{+}\PY{l+m+mi}{2} \PY{o+ow}{and} \PY{n}{t}\PY{p}{[}\PY{n}{j}\PY{o}{+}\PY{l+m+mi}{1}\PY{p}{]} \PY{o}{!=} \PY{l+m+mi}{2}\PY{o}{*}\PY{n}{g}\PY{o}{+}\PY{l+m+mi}{2}\PY{p}{:}
                    \PY{n}{mult} \PY{o}{=} \PY{l+m+mi}{4}\PY{o}{*}\PY{n}{g}\PY{o}{-}\PY{l+m+mi}{2} \PY{k}{if} \PY{n}{t}\PY{p}{[}\PY{n}{j}\PY{o}{+}\PY{l+m+mi}{1}\PY{p}{]} \PY{o}{!=} \PY{l+m+mi}{2}\PY{o}{*}\PY{n}{g}\PY{o}{-}\PY{l+m+mi}{1} \PY{o+ow}{and} \PY{n}{t}\PY{p}{[}\PY{n}{j}\PY{o}{+}\PY{l+m+mi}{1}\PY{p}{]} \PY{o}{!=} \PY{l+m+mi}{2}\PY{o}{*}\PY{n}{g}\PY{o}{-}\PY{l+m+mi}{2} \PY{k}{else} \PY{l+m+mi}{4}\PY{o}{*}\PY{n}{g}\PY{o}{-}\PY{l+m+mi}{3}
                    \PY{n}{m} \PY{o}{*}\PY{o}{=} \PY{n}{mult}
                    \PY{n}{ambiguous\PYZus{}count} \PY{o}{=} \PY{n}{ambiguous\PYZus{}count} \PY{o}{-} \PY{p}{(}\PY{l+m+mi}{2}\PY{o}{*}\PY{n}{g}\PY{o}{-}\PY{l+m+mi}{1}\PY{p}{)}
                \PY{k}{else}\PY{p}{:}
                    \PY{n}{ambiguous\PYZus{}count} \PY{o}{+}\PY{o}{=} \PY{l+m+mi}{1}
            \PY{n}{m} \PY{o}{=} \PY{n}{m} \PY{o}{*} \PY{l+m+mi}{2}\PY{o}{\PYZca{}}\PY{p}{(}\PY{n}{ambiguous\PYZus{}count}\PY{o}{/}\PY{p}{(}\PY{l+m+mi}{2}\PY{o}{*}\PY{n}{g}\PY{o}{-}\PY{l+m+mi}{1}\PY{p}{)}\PY{p}{)}
            \PY{c}{# count the multiplicities due to the ambiguities after true\PYZus{}length}
            \PY{n}{last\PYZus{}letter} \PY{o}{=} \PY{n}{t}\PY{p}{[}\PY{n}{true\PYZus{}length}\PY{p}{]}
            \PY{n}{remainder}   \PY{o}{=} \PY{n}{floor}\PY{p}{(}\PY{p}{(}\PY{n+nb}{len}\PY{p}{(}\PY{n}{t}\PY{p}{)} \PY{o}{-} \PY{n}{true\PYZus{}length} \PY{o}{-} \PY{l+m+mi}{2}\PY{p}{)}\PY{o}{/}\PY{p}{(}\PY{l+m+mi}{2}\PY{o}{*}\PY{n}{g}\PY{o}{-}\PY{l+m+mi}{1}\PY{p}{)}\PY{p}{)}
            \PY{k}{if} \PY{n}{remainder} \PY{o}{>} \PY{l+m+mi}{0}\PY{p}{:}
                \PY{n}{true\PYZus{}length} \PY{o}{+}\PY{o}{=} \PY{n}{remainder}\PY{o}{*}\PY{p}{(}\PY{l+m+mi}{2}\PY{o}{*}\PY{n}{g}\PY{o}{-}\PY{l+m+mi}{1}\PY{p}{)}
                \PY{n}{m} \PY{o}{=} \PY{n}{m}\PY{o}{*}\PY{l+m+mi}{2}\PY{o}{\PYZca{}}\PY{n}{remainder}
            \PY{n}{A}\PY{p}{[}\PY{n}{i}\PY{p}{]} \PY{o}{=} \PY{n}{m} \PY{o}{*} \PY{n}{precompute}\PY{p}{[}\PY{n}{last\PYZus{}letter}\PY{p}{]}\PY{p}{[}\PY{n}{true\PYZus{}length}\PY{p}{]}
        \PY{k}{return} \PY{n}{growth}\PY{p}{,} \PY{n}{A}


\PY{k}{class} \PY{n+nc}{ExtendedTypesBuilderSurfaceLength}\PY{p}{(}\PY{n}{ExtendedTypesBuilderSurfaceWeightLength}\PY{p}{)}\PY{p}{:}
    \PY{l+s+sd}{"""Extended type builder that only selects admissible}
\PY{l+s+sd}{    words through their length"""}
    \PY{k}{def} \PY{n+nf}{\PYZus{}\PYZus{}init\PYZus{}\PYZus{}}\PY{p}{(}\PY{n+nb+bp}{self}\PY{p}{,} \PY{n}{genus}\PY{p}{,} \PY{n}{length}\PY{p}{)}\PY{p}{:}
        \PY{n}{ExtendedTypesBuilderSurfaceWeightLength}\PY{o}{.}\PY{n}{\PYZus{}\PYZus{}init\PYZus{}\PYZus{}}\PY{p}{(}\PY{n+nb+bp}{self}\PY{p}{,}
                 \PY{n}{genus}\PY{p}{,} \PY{p}{[}\PY{l+m+mi}{0} \PY{k}{for} \PY{n}{i} \PY{o+ow}{in} \PY{n+nb}{xrange}\PY{p}{(}\PY{l+m+mi}{2}\PY{o}{*}\PY{n}{genus}\PY{o}{+}\PY{l+m+mi}{3}\PY{p}{)}\PY{p}{]}\PY{p}{,} \PY{l+m+mi}{1}\PY{p}{,} \PY{n}{length}\PY{p}{)}


\PY{k}{class} \PY{n+nc}{ExtendedTypesBuilderSurfaceWeight}\PY{p}{(}\PY{n}{ExtendedTypesBuilderSurfaceWeightLength}\PY{p}{)}\PY{p}{:}
    \PY{l+s+sd}{"""Extended type builder that only selects admissible}
\PY{l+s+sd}{    words through their weight"""}
    \PY{k}{def} \PY{n+nf}{\PYZus{}\PYZus{}init\PYZus{}\PYZus{}}\PY{p}{(}\PY{n+nb+bp}{self}\PY{p}{,} \PY{n}{genus}\PY{p}{,} \PY{n}{weights}\PY{p}{,} \PY{n}{max\PYZus{}weight}\PY{p}{)}\PY{p}{:}
        \PY{n}{ExtendedTypesBuilderSurfaceWeightLength}\PY{o}{.}\PY{n}{\PYZus{}\PYZus{}init\PYZus{}\PYZus{}}\PY{p}{(}\PY{n+nb+bp}{self}\PY{p}{,}
                 \PY{n}{genus}\PY{p}{,} \PY{n}{weights}\PY{p}{,} \PY{n}{max\PYZus{}weight}\PY{p}{,} \PY{l+m+mi}{10000}\PY{p}{)}


\PY{k}{def} \PY{n+nf}{test\PYZus{}length\PYZus{}surface}\PY{p}{(}\PY{n}{genus}\PY{p}{,} \PY{n}{max\PYZus{}length}\PY{p}{)}\PY{p}{:}
    \PY{n}{e} \PY{o}{=} \PY{n}{ExtendedTypesBuilderSurfaceLength}\PY{p}{(}\PY{n}{genus}\PY{p}{,} \PY{n}{max\PYZus{}length}\PY{p}{)}
    \PY{n}{r} \PY{o}{=} \PY{n}{RhoEstimatorExt}\PY{p}{(}\PY{n}{e}\PY{p}{)}
    \PY{k}{print} \PY{l+s}{"}\PY{l+s}{genus = }\PY{l+s}{"}\PY{p}{,} \PY{n}{genus}\PY{p}{,} \PY{l+s}{"}\PY{l+s}{, max\PYZus{}length = }\PY{l+s}{"}\PY{p}{,} \PY{n}{max\PYZus{}length}\PY{p}{,}\PYZbs{}
          \PY{l+s}{"}\PY{l+s}{, matrix size = }\PY{l+s}{"}\PY{p}{,} \PY{n}{r}\PY{o}{.}\PY{n}{M}\PY{o}{.}\PY{n}{nrows}\PY{p}{(}\PY{p}{)}
    \PY{k}{print} \PY{l+s}{"}\PY{l+s}{estimate = }\PY{l+s}{"}\PY{p}{,} \PY{n}{r}\PY{o}{.}\PY{n}{estimate}\PY{p}{(}\PY{p}{)}

\PY{k}{def} \PY{n+nf}{test\PYZus{}weight\PYZus{}surface}\PY{p}{(}\PY{n}{genus}\PY{p}{,} \PY{n}{weights}\PY{p}{,} \PY{n}{max\PYZus{}weight}\PY{p}{)}\PY{p}{:}
    \PY{n}{e} \PY{o}{=} \PY{n}{ExtendedTypesBuilderSurfaceWeight}\PY{p}{(}\PY{n}{genus}\PY{p}{,} \PY{n}{weights}\PY{p}{,} \PY{n}{max\PYZus{}weight}\PY{p}{)}
    \PY{n}{r} \PY{o}{=} \PY{n}{RhoEstimatorExt}\PY{p}{(}\PY{n}{e}\PY{p}{)}
    \PY{k}{print} \PY{l+s}{"}\PY{l+s}{genus = }\PY{l+s}{"}\PY{p}{,} \PY{n}{genus}\PY{p}{,} \PY{l+s}{"}\PY{l+s}{, max\PYZus{}weight = }\PY{l+s}{"}\PY{p}{,} \PY{n}{max\PYZus{}weight}\PY{p}{,}\PYZbs{}
          \PY{l+s}{"}\PY{l+s}{, matrix size = }\PY{l+s}{"}\PY{p}{,} \PY{n}{r}\PY{o}{.}\PY{n}{M}\PY{o}{.}\PY{n}{nrows}\PY{p}{(}\PY{p}{)}
    \PY{k}{print} \PY{l+s}{"}\PY{l+s}{estimate = }\PY{l+s}{"}\PY{p}{,} \PY{n}{r}\PY{o}{.}\PY{n}{estimate}\PY{p}{(}\PY{p}{)}
\end{Verbatim}

\begin{Verbatim}[commandchars=\\\{\}]
\PY{n}{time} \PY{n}{test\PYZus{}length\PYZus{}surface}\PY{p}{(}\PY{l+m+mi}{2}\PY{p}{,} \PY{l+m+mi}{11}\PY{p}{)}
\end{Verbatim}
\begin{Verbatim}[formatcom=\color{blue}]
genus =  2 , max_length =  11 , matrix size =  111331
estimate =  0.662752835287
Time: CPU 13.36 s, Wall: 13.40 s
\end{Verbatim}

\begin{Verbatim}[commandchars=\\\{\}]
\PY{n}{time} \PY{n}{test\PYZus{}weight\PYZus{}surface}\PY{p}{(}\PY{l+m+mi}{2}\PY{p}{,} \PY{p}{[}\PY{l+m+mi}{1}\PY{p}{,} \PY{l+m+mi}{2}\PY{p}{,} \PY{l+m+mi}{3}\PY{p}{,} \PY{l+m+mi}{4}\PY{p}{,} \PY{l+m+mi}{1}\PY{p}{,} \PY{l+m+mi}{2}\PY{p}{,} \PY{l+m+mi}{4}\PY{p}{]}\PY{p}{,} \PY{l+m+mi}{17}\PY{p}{)}
\end{Verbatim}
\begin{Verbatim}[formatcom=\color{blue}]
genus =  2 , max_weight =  17 , matrix size =  98406
estimate =  0.662754827875
Time: CPU 11.58 s, Wall: 11.60 s
\end{Verbatim}

 Again, weights give better estimates than length.

\begin{Verbatim}[commandchars=\\\{\}]
\PY{n}{time} \PY{n}{test\PYZus{}weight\PYZus{}surface}\PY{p}{(}\PY{l+m+mi}{2}\PY{p}{,} \PY{p}{[}\PY{l+m+mi}{1}\PY{p}{,} \PY{l+m+mi}{2}\PY{p}{,} \PY{l+m+mi}{3}\PY{p}{,} \PY{l+m+mi}{4}\PY{p}{,} \PY{l+m+mi}{1}\PY{p}{,} \PY{l+m+mi}{2}\PY{p}{,} \PY{l+m+mi}{4}\PY{p}{]}\PY{p}{,} \PY{l+m+mi}{24}\PY{p}{)}
\end{Verbatim}
\begin{Verbatim}[formatcom=\color{blue}]
genus =  2 , max_weight =  24 , matrix size =  5117838
estimate =  0.662770548031
Time: CPU 1170.24 s, Wall: 1173.11 s
\end{Verbatim}

\begin{Verbatim}[commandchars=\\\{\}]
\PY{n}{time} \PY{n}{test\PYZus{}weight\PYZus{}surface}\PY{p}{(}\PY{l+m+mi}{2}\PY{p}{,} \PY{p}{[}\PY{l+m+mi}{1}\PY{p}{,} \PY{l+m+mi}{2}\PY{p}{,} \PY{l+m+mi}{3}\PY{p}{,} \PY{l+m+mi}{4}\PY{p}{,} \PY{l+m+mi}{1}\PY{p}{,} \PY{l+m+mi}{2}\PY{p}{,} \PY{l+m+mi}{4}\PY{p}{]}\PY{p}{,} \PY{l+m+mi}{25}\PY{p}{)}
\end{Verbatim}
\begin{Verbatim}[formatcom=\color{blue}]
genus =  2 , max_weight =  25 , matrix size =  8999902
estimate =  0.662772114698
Time: CPU 2105.08 s, Wall: 2212.72 s
\end{Verbatim}

\begin{Verbatim}[commandchars=\\\{\}]
\PY{n}{time} \PY{n}{test\PYZus{}weight\PYZus{}surface}\PY{p}{(}\PY{l+m+mi}{3}\PY{p}{,} \PY{p}{[}\PY{l+m+mi}{1}\PY{p}{,} \PY{l+m+mi}{2}\PY{p}{,} \PY{l+m+mi}{3}\PY{p}{,} \PY{l+m+mi}{4}\PY{p}{,} \PY{l+m+mi}{5}\PY{p}{,} \PY{l+m+mi}{6}\PY{p}{,} \PY{l+m+mi}{1}\PY{p}{,} \PY{l+m+mi}{2}\PY{p}{,} \PY{l+m+mi}{6}\PY{p}{]}\PY{p}{,} \PY{l+m+mi}{25}\PY{p}{)}
\end{Verbatim}
\begin{Verbatim}[formatcom=\color{blue}]
genus =  3 , max_weight =  25 , matrix size =  7307293
estimate =  0.55277355933
Time: CPU 1662.42 s, Wall: 1666.67 s
\end{Verbatim}

 We will now compare these estimates with the best estimates up to now, due to Bartholdi.

 We redo Bartholdi's computations with high precision. The method is first to find the root $\zeta$ of the polynomial $((dx)^{m-1}-1)(x-1)+2(dx-1)$ (for $m=d=4g$) (beware, there is a typo in the article on Page 11 Line 10: he writes $d^m \zeta^m$, but he really means $d^{m-1}\zeta^{m-1}$ (this is what comes out of (3), and this is what he uses to compute effectively $\zeta$). Then, a solution of an explicit polynomial equation involving $\zeta$ is a lower bound for the spectral radius.

\begin{Verbatim}[commandchars=\\\{\}]
\PY{k}{def} \PY{n+nf}{bartholdiLower}\PY{p}{(}\PY{n}{g}\PY{p}{,} \PY{n}{print\PYZus{}zeta} \PY{o}{=} \PY{n}{false}\PY{p}{)}\PY{p}{:}
    \PY{l+s+sd}{"""returns Bartholdi's lower bound on the spectral radius }
\PY{l+s+sd}{       in the surface group of genus g.}
\PY{l+s+sd}{       Computations are exact, made only with algebraic numbers"""}
    \PY{n}{m} \PY{o}{=} \PY{l+m+mi}{4}\PY{o}{*}\PY{n}{g}
    \PY{n}{d} \PY{o}{=} \PY{l+m+mi}{4}\PY{o}{*}\PY{n}{g}
    \PY{n}{K}\PY{o}{.}\PY{o}{<}\PY{n}{x}\PY{o}{>} \PY{o}{=} \PY{n}{AA}\PY{p}{[}\PY{l+s}{'}\PY{l+s}{x}\PY{l+s}{'}\PY{p}{]}
    \PY{n}{P} \PY{o}{=} \PY{p}{(}\PY{p}{(}\PY{n}{d}\PY{o}{*}\PY{n}{x}\PY{p}{)}\PY{o}{\PYZca{}}\PY{p}{(}\PY{n}{m}\PY{o}{-}\PY{l+m+mi}{1}\PY{p}{)}\PY{o}{-}\PY{l+m+mi}{1}\PY{p}{)}\PY{o}{*}\PY{p}{(}\PY{n}{x}\PY{o}{-}\PY{l+m+mi}{1}\PY{p}{)}\PY{o}{+}\PY{l+m+mi}{2}\PY{o}{*}\PY{p}{(}\PY{n}{d}\PY{o}{*}\PY{n}{x}\PY{o}{-}\PY{l+m+mi}{1}\PY{p}{)}
    \PY{n}{zeta}\PY{o}{=}\PY{n+nb}{max}\PY{p}{(}\PY{n}{P}\PY{o}{.}\PY{n}{roots}\PY{p}{(}\PY{n}{multiplicities}\PY{o}{=}\PY{n+nb+bp}{False}\PY{p}{)}\PY{p}{)}
    \PY{k}{if} \PY{n}{print\PYZus{}zeta}\PY{p}{:}
        \PY{k}{print} \PY{l+s}{"}\PY{l+s}{zeta = }\PY{l+s}{"}\PY{p}{,} \PY{n}{zeta}

    \PY{c}{# We make intermediate computations in the symbolic ring,}
    \PY{c}{# since simplifications are handled much more efficiently.}
    \PY{c}{# The only drawback is that we have to convert back to polynomials in the end}
    \PY{n}{t}\PY{p}{,}\PY{n}{u}\PY{p}{,}\PY{n}{z} \PY{o}{=} \PY{n}{var}\PY{p}{(}\PY{l+s}{'}\PY{l+s}{t,u,z}\PY{l+s}{'}\PY{p}{)}
    \PY{n}{f} \PY{o}{=} \PY{l+m+mi}{2}\PY{o}{*}\PY{n}{d}\PY{o}{*}\PY{n}{t}\PY{o}{\PYZca{}}\PY{n}{m}
    \PY{n}{Hsing}  \PY{o}{=} \PY{n}{t}\PY{o}{/}\PY{p}{(}\PY{l+m+mi}{1}\PY{o}{+}\PY{p}{(}\PY{l+m+mi}{1}\PY{o}{-}\PY{n}{u}\PY{p}{)}\PY{o}{*}\PY{p}{(}\PY{n}{d}\PY{o}{-}\PY{l+m+mi}{1}\PY{o}{+}\PY{n}{u}\PY{p}{)}\PY{o}{*}\PY{n}{t}\PY{o}{\PYZca{}}\PY{l+m+mi}{2}\PY{p}{)}
    \PY{n}{g1sing} \PY{o}{=} \PY{n}{Hsing}\PY{p}{(}\PY{n}{t} \PY{o}{=} \PY{n}{t}\PY{o}{*}\PY{n}{z}\PY{p}{)}
    \PY{n}{g2sing} \PY{o}{=} \PY{n}{g1sing}\PY{p}{(}\PY{n}{t}\PY{o}{=} \PY{n}{t}\PY{o}{*}\PY{p}{(}\PY{n}{d}\PY{o}{-}\PY{n}{f}\PY{p}{)}\PY{o}{/}\PY{p}{(}\PY{n}{d}\PY{o}{-}\PY{p}{(}\PY{n}{d}\PY{o}{-}\PY{l+m+mi}{1}\PY{p}{)}\PY{o}{*}\PY{n}{f}\PY{p}{)}\PY{p}{,} \PY{n}{u} \PY{o}{=} \PY{p}{(}\PY{n}{d}\PY{o}{-}\PY{l+m+mi}{2}\PY{p}{)}\PY{o}{*}\PY{n}{f}\PY{o}{/}\PY{p}{(}\PY{n}{d}\PY{o}{-}\PY{n}{f}\PY{p}{)}\PY{p}{)}
    \PY{n}{A}\PY{o}{=}\PY{p}{(}\PY{n}{g2sing}\PY{o}{\PYZca{}}\PY{l+m+mi}{2}\PY{o}{-}\PY{l+m+mi}{1}\PY{o}{/}\PY{p}{(}\PY{l+m+mi}{4}\PY{o}{*}\PY{p}{(}\PY{n}{d}\PY{o}{-}\PY{l+m+mi}{1}\PY{p}{)}\PY{p}{)}\PY{p}{)}\PY{o}{.}\PY{n}{simplify\PYZus{}full}\PY{p}{(}\PY{p}{)}\PY{o}{.}\PY{n}{numerator}\PY{p}{(}\PY{p}{)}
    \PY{n}{A}\PY{o}{=}\PY{n}{A}\PY{p}{(}\PY{n}{z}\PY{o}{=}\PY{n}{SR}\PY{p}{(}\PY{n}{zeta}\PY{p}{)}\PY{p}{)}

    \PY{c}{#Convert everything back to polynomials}
    \PY{n}{R}\PY{o}{.}\PY{o}{<}\PY{n}{t}\PY{o}{>}\PY{o}{=}\PY{n}{AA}\PY{p}{[}\PY{l+s}{'}\PY{l+s}{t}\PY{l+s}{'}\PY{p}{]}
    \PY{n}{A}\PY{o}{=}\PY{n}{R}\PY{p}{(}\PY{n}{A}\PY{p}{)}
    \PY{n}{alpha}\PY{o}{=}\PY{n+nb}{min}\PY{p}{(}\PY{p}{[}\PY{n}{r} \PY{k}{for} \PY{n}{r} \PY{o+ow}{in} \PY{n}{A}\PY{o}{.}\PY{n}{roots}\PY{p}{(}\PY{n}{multiplicities} \PY{o}{=} \PY{n+nb+bp}{False}\PY{p}{)} \PY{k}{if} \PY{n}{r} \PY{o}{>} \PY{l+m+mi}{0}\PY{p}{]}\PY{p}{)}
    \PY{n}{rho} \PY{o}{=} \PY{n}{alpha}\PY{o}{/}\PY{p}{(}\PY{l+m+mi}{1}\PY{o}{+}\PY{p}{(}\PY{n}{d}\PY{o}{-}\PY{l+m+mi}{1}\PY{p}{)}\PY{o}{*}\PY{n}{alpha}\PY{o}{\PYZca{}}\PY{l+m+mi}{2}\PY{p}{)}
    \PY{k}{return} \PY{l+m+mi}{1}\PY{o}{/}\PY{p}{(}\PY{n}{d}\PY{o}{*}\PY{n}{rho}\PY{p}{)}
\end{Verbatim}

\begin{Verbatim}[commandchars=\\\{\}]
\PY{n}{bartholdiLower}\PY{p}{(}\PY{l+m+mi}{2}\PY{p}{,} \PY{n}{true}\PY{p}{)}
\end{Verbatim}
\begin{Verbatim}[formatcom=\color{blue}]
zeta =  0.999993324015561?
\end{Verbatim}

{\color{blue}
$\newcommand{\Bold}[1]{\mathbf{#1}}0.6624219223029230?$
}

 Let us check that our value of $\zeta$ coincides with the value given by Bartholdi:

\begin{Verbatim}[commandchars=\\\{\}]
\PY{l+m+mi}{1}\PY{o}{-}\PY{l+m+mf}{0.63}\PY{o}{*}\PY{l+m+mi}{10}\PY{o}{\PYZca{}}\PY{p}{(}\PY{o}{-}\PY{l+m+mi}{5}\PY{p}{)}
\end{Verbatim}

{\color{blue}
$\newcommand{\Bold}[1]{\mathbf{#1}}0.999993700000000$
}

 The value given by Bartholdi for $\zeta$ differs slightly from what we found, probably due to rounding errors. Since we made an exact computation relying only on algebraic numbers, our value should be the correct one. The lower bound in the end is slightly better than what Bartholdi claims in his article!

\begin{Verbatim}[commandchars=\\\{\}]
\PY{n}{bartholdiLower}\PY{p}{(}\PY{l+m+mi}{3}\PY{p}{,} \PY{n}{true}\PY{p}{)}
\end{Verbatim}
\begin{Verbatim}[formatcom=\color{blue}]
zeta =  0.9999999999703906?
\end{Verbatim}

{\color{blue}
$\newcommand{\Bold}[1]{\mathbf{#1}}0.5527735401122323?$
}

\begin{Verbatim}[commandchars=\\\{\}]
\PY{l+m+mi}{1}\PY{o}{-}\PY{l+m+mf}{0.29}\PY{o}{*}\PY{l+m+mi}{10}\PY{o}{\PYZca{}}\PY{p}{(}\PY{o}{-}\PY{l+m+mi}{10}\PY{p}{)}
\end{Verbatim}

{\color{blue}
$\newcommand{\Bold}[1]{\mathbf{#1}}0.999999999971000$
}

 Again, Bartholdi's value for $\zeta$ is not completely exact, but almost. The lower bound we get on $\rho$ agrees with Bartholdi's claims in his paper.

\begin{Verbatim}[commandchars=\\\{\}]
\PY{n}{bartholdiLower}\PY{p}{(}\PY{l+m+mi}{4}\PY{p}{)}
\end{Verbatim}

{\color{blue}
$\newcommand{\Bold}[1]{\mathbf{#1}}0.484122920740487?$
}

\begin{Verbatim}[commandchars=\\\{\}]
\PY{n}{bartholdiLower}\PY{p}{(}\PY{l+m+mi}{5}\PY{p}{)}
\end{Verbatim}

{\color{blue}
$\newcommand{\Bold}[1]{\mathbf{#1}}0.4358898943553?$
}

 Our bounds are really better in genus $2$ (even without taking extended types), marginally better in genus $3$ (but we need to take very long extended types, corresponding to very large matrices), but worse in genus $4$ (and certainly also in higher genus).

\begin{Verbatim}[commandchars=\\\{\}]
\PY{o}{%}\PY{n}{cython}
\PY{c}{# Since multiplication of double float sparse matrices is not}
\PY{c}{# specifically implemented in sage, it is very slow.}
\PY{c}{# We provide a Cython version.}
\PY{c}{# All the number crunching part (the while True loop) is plain C, }
\PY{c}{# and therefore very fast}
\PY{k}{from} \PY{n+nn}{libc.stdlib} \PY{k}{cimport} \PY{n}{malloc}\PY{p}{,} \PY{n}{free}

\PY{k}{def} \PY{n+nf}{max\PYZus{}expansion\PYZus{}sparse\PYZus{}double\PYZus{}cython}\PY{p}{(}\PY{n}{M}\PY{p}{,} \PY{n}{precision} \PY{o}{=} \PY{l+m+mf}{10}\PY{o}{\PYZca{}}\PY{p}{(}\PY{o}{-}\PY{l+m+mf}{50}\PY{p}{)}\PY{p}{)}\PY{p}{:}
    \PY{l+s+sd}{"""M is a matrix with nonnegative entries.}
\PY{l+s+sd}{    returns a number that is less than or equal to the}
\PY{l+s+sd}{    maximum of (q, M q) for q in the unit sphere. }
\PY{l+s+sd}{    Computations are done within the given precision.}
\PY{l+s+sd}{    """}
    \PY{k}{cdef} \PY{p}{:}
        \PY{n}{Py\PYZus{}ssize\PYZus{}t} \PY{n}{k}\PY{p}{,} \PY{n}{vect\PYZus{}size}\PY{p}{,} \PY{n}{mat\PYZus{}size}\PY{p}{,} \PY{n}{i}
        \PY{n}{double} \PY{o}{*}\PY{n}{w}\PY{p}{,} \PY{n}{diff}\PY{p}{,} \PY{n}{norm}\PY{p}{,} \PY{n}{expansion}\PY{p}{,} \PY{n}{norm\PYZus{}inv}
        \PY{n}{double} \PY{n}{prec} \PY{o}{=} \PY{n}{precision}

    \PY{n}{nz}        \PY{o}{=} \PY{n}{M}\PY{o}{.}\PY{n}{nonzero\PYZus{}positions}\PY{p}{(}\PY{n}{copy}\PY{o}{=}\PY{n+nb+bp}{False}\PY{p}{)}
    \PY{n}{mat\PYZus{}size}  \PY{o}{=} \PY{n+nb}{len}\PY{p}{(}\PY{n}{nz}\PY{p}{)}
    \PY{n}{vect\PYZus{}size} \PY{o}{=} \PY{n}{M}\PY{o}{.}\PY{n}{nrows}\PY{p}{(}\PY{p}{)}

    \PY{c}{# Allocate the memory}
    \PY{n}{cdef}\PY{p}{:}
        \PY{n+nb}{int} \PY{o}{*}\PY{n}{matrix\PYZus{}line}     \PY{o}{=} \PY{o}{<}\PY{n+nb}{int} \PY{o}{*}\PY{o}{>}\PY{n}{malloc}\PY{p}{(}\PY{n}{mat\PYZus{}size} \PY{o}{*} \PY{n}{sizeof}\PY{p}{(}\PY{n+nb}{int}\PY{p}{)}\PY{p}{)}
        \PY{n+nb}{int} \PY{o}{*}\PY{n}{matrix\PYZus{}column}   \PY{o}{=} \PY{o}{<}\PY{n+nb}{int} \PY{o}{*}\PY{o}{>}\PY{n}{malloc}\PY{p}{(}\PY{n}{mat\PYZus{}size} \PY{o}{*} \PY{n}{sizeof}\PY{p}{(}\PY{n+nb}{int}\PY{p}{)}\PY{p}{)}
        \PY{n}{double} \PY{o}{*}\PY{n}{matrix\PYZus{}coeff} \PY{o}{=} \PY{o}{<}\PY{n}{double} \PY{o}{*}\PY{o}{>}\PY{n}{malloc}\PY{p}{(}\PY{n}{mat\PYZus{}size} \PY{o}{*}\PY{n}{sizeof}\PY{p}{(}\PY{n}{double}\PY{p}{)}\PY{p}{)}
        \PY{n}{double} \PY{o}{*}\PY{n}{u}            \PY{o}{=} \PY{o}{<}\PY{n}{double} \PY{o}{*}\PY{o}{>} \PY{n}{malloc}\PY{p}{(}\PY{n}{vect\PYZus{}size} \PY{o}{*} \PY{n}{sizeof}\PY{p}{(}\PY{n}{double}\PY{p}{)}\PY{p}{)}
        \PY{n}{double} \PY{o}{*}\PY{n}{v}            \PY{o}{=} \PY{o}{<}\PY{n}{double} \PY{o}{*}\PY{o}{>} \PY{n}{malloc}\PY{p}{(}\PY{n}{vect\PYZus{}size} \PY{o}{*} \PY{n}{sizeof}\PY{p}{(}\PY{n}{double}\PY{p}{)}\PY{p}{)}
    \PY{k}{if} \PY{p}{(}\PY{p}{(}\PY{o+ow}{not} \PY{n}{matrix\PYZus{}line}\PY{p}{)} \PY{o+ow}{or} \PY{p}{(}\PY{o+ow}{not} \PY{n}{matrix\PYZus{}column}\PY{p}{)} \PY{o+ow}{or} \PY{p}{(}\PY{o+ow}{not} \PY{n}{matrix\PYZus{}coeff}\PY{p}{)}
            \PY{o+ow}{or} \PY{p}{(}\PY{o+ow}{not} \PY{n}{u}\PY{p}{)} \PY{o+ow}{or} \PY{p}{(}\PY{o+ow}{not} \PY{n}{v}\PY{p}{)}\PY{p}{)}\PY{p}{:}
        \PY{n}{free}\PY{p}{(}\PY{n}{matrix\PYZus{}line}\PY{p}{)}
        \PY{n}{free}\PY{p}{(}\PY{n}{matrix\PYZus{}column}\PY{p}{)}
        \PY{n}{free}\PY{p}{(}\PY{n}{matrix\PYZus{}coeff}\PY{p}{)}
        \PY{n}{free}\PY{p}{(}\PY{n}{u}\PY{p}{)}
        \PY{n}{free}\PY{p}{(}\PY{n}{v}\PY{p}{)}
        \PY{k}{raise} \PY{n+ne}{MemoryError}\PY{p}{(}\PY{p}{)}

    \PY{c}{# Store in the arrays the nonzero coefficients of M/2     }
    \PY{k}{for} \PY{n}{k} \PY{o+ow}{in} \PY{n+nb}{xrange}\PY{p}{(}\PY{n}{mat\PYZus{}size}\PY{p}{)}\PY{p}{:}
        \PY{n}{matrix\PYZus{}line}\PY{p}{[}\PY{n}{k}\PY{p}{]}   \PY{o}{=} \PY{n}{nz}\PY{p}{[}\PY{n}{k}\PY{p}{]}\PY{p}{[}\PY{l+m+mf}{0}\PY{p}{]}
        \PY{n}{matrix\PYZus{}column}\PY{p}{[}\PY{n}{k}\PY{p}{]} \PY{o}{=} \PY{n}{nz}\PY{p}{[}\PY{n}{k}\PY{p}{]}\PY{p}{[}\PY{l+m+mf}{1}\PY{p}{]}
        \PY{n}{matrix\PYZus{}coeff}\PY{p}{[}\PY{n}{k}\PY{p}{]}  \PY{o}{=} \PY{p}{<}\PY{k+kt}{double}\PY{p}{>}\PY{n}{M}\PY{p}{[}\PY{n}{matrix\PYZus{}line}\PY{p}{[}\PY{n}{k}\PY{p}{]}\PY{p}{,} \PY{n}{matrix\PYZus{}column}\PY{p}{[}\PY{n}{k}\PY{p}{]}\PY{p}{]}\PY{o}{/}\PY{l+m+mf}{2}

    \PY{c}{# Fill u with a uniform vector of norm 1    }
    \PY{n}{d} \PY{o}{=} \PY{l+m+mf}{1}\PY{o}{/}\PY{n}{sqrt}\PY{p}{(}\PY{p}{<}\PY{k+kt}{double}\PY{p}{>}\PY{n}{M}\PY{o}{.}\PY{n}{nrows}\PY{p}{(}\PY{p}{)}\PY{p}{)}
    \PY{k}{for} \PY{n}{k} \PY{o+ow}{in} \PY{n+nb}{xrange}\PY{p}{(}\PY{n}{vect\PYZus{}size}\PY{p}{)}\PY{p}{:}
        \PY{n}{u}\PY{p}{[}\PY{n}{k}\PY{p}{]} \PY{o}{=} \PY{n}{d}

    \PY{c}{# Apply iteratively the matrix (M+M.transpose())/2 to u,}
    \PY{c}{# to converge towards the maximal eigenvector}
    \PY{n}{expansion} \PY{o}{=} \PY{l+m+mf}{0}
    \PY{k}{while} \PY{n+nb+bp}{True}\PY{p}{:}
        \PY{c}{# v = M * u}
        \PY{k}{for} \PY{n}{k} \PY{o+ow}{in} \PY{n+nb}{xrange}\PY{p}{(}\PY{n}{vect\PYZus{}size}\PY{p}{)}\PY{p}{:}
            \PY{n}{v}\PY{p}{[}\PY{n}{k}\PY{p}{]} \PY{o}{=} \PY{l+m+mf}{0}
        \PY{k}{for} \PY{n}{k} \PY{o+ow}{in} \PY{n+nb}{xrange}\PY{p}{(}\PY{n}{mat\PYZus{}size}\PY{p}{)}\PY{p}{:}
            \PY{n}{v}\PY{p}{[}\PY{n}{matrix\PYZus{}line}\PY{p}{[}\PY{n}{k}\PY{p}{]}\PY{p}{]}   \PY{o}{+}\PY{o}{=} \PY{n}{matrix\PYZus{}coeff}\PY{p}{[}\PY{n}{k}\PY{p}{]} \PY{o}{*} \PY{n}{u}\PY{p}{[}\PY{n}{matrix\PYZus{}column}\PY{p}{[}\PY{n}{k}\PY{p}{]}\PY{p}{]}
            \PY{n}{v}\PY{p}{[}\PY{n}{matrix\PYZus{}column}\PY{p}{[}\PY{n}{k}\PY{p}{]}\PY{p}{]} \PY{o}{+}\PY{o}{=} \PY{n}{matrix\PYZus{}coeff}\PY{p}{[}\PY{n}{k}\PY{p}{]} \PY{o}{*} \PY{n}{u}\PY{p}{[}\PY{n}{matrix\PYZus{}line}\PY{p}{[}\PY{n}{k}\PY{p}{]}\PY{p}{]}

        \PY{c}{# norm = v.norm()}
        \PY{n}{norm} \PY{o}{=} \PY{l+m+mf}{0}
        \PY{k}{for} \PY{n}{k} \PY{o+ow}{in} \PY{n+nb}{xrange}\PY{p}{(}\PY{n}{vect\PYZus{}size}\PY{p}{)}\PY{p}{:}
            \PY{n}{norm} \PY{o}{+}\PY{o}{=} \PY{n}{v}\PY{p}{[}\PY{n}{k}\PY{p}{]} \PY{o}{*} \PY{n}{v}\PY{p}{[}\PY{n}{k}\PY{p}{]}
        \PY{n}{norm} \PY{o}{=} \PY{n}{sqrt}\PY{p}{(}\PY{n}{norm}\PY{p}{)}

        \PY{n}{diff}      \PY{o}{=} \PY{n}{norm} \PY{o}{-} \PY{n}{expansion}
        \PY{n}{expansion} \PY{o}{=} \PY{n}{norm}

        \PY{c}{# v = v/v.norm()}
        \PY{n}{norm\PYZus{}inv} \PY{o}{=} \PY{l+m+mf}{1}\PY{o}{/}\PY{n}{norm}
        \PY{k}{for} \PY{n}{k} \PY{o+ow}{in} \PY{n+nb}{xrange}\PY{p}{(}\PY{n}{vect\PYZus{}size}\PY{p}{)}\PY{p}{:}
            \PY{n}{v}\PY{p}{[}\PY{n}{k}\PY{p}{]} \PY{o}{*}\PY{o}{=} \PY{n}{norm\PYZus{}inv}

        \PY{c}{# swap u and v}
        \PY{n}{w} \PY{o}{=} \PY{n}{u}
        \PY{n}{u} \PY{o}{=} \PY{n}{v}
        \PY{n}{v} \PY{o}{=} \PY{n}{w}

        \PY{k}{if} \PY{p}{(}\PY{n}{diff} \PY{o}{<} \PY{n}{prec}\PY{p}{)}\PY{p}{:}
            \PY{k}{break}

    \PY{n}{free}\PY{p}{(}\PY{n}{matrix\PYZus{}line}\PY{p}{)}
    \PY{n}{free}\PY{p}{(}\PY{n}{matrix\PYZus{}column}\PY{p}{)}
    \PY{n}{free}\PY{p}{(}\PY{n}{matrix\PYZus{}coeff}\PY{p}{)}
    \PY{n}{free}\PY{p}{(}\PY{n}{u}\PY{p}{)}
    \PY{n}{free}\PY{p}{(}\PY{n}{v}\PY{p}{)}
    \PY{k}{return} \PY{n}{expansion}
\end{Verbatim}

\bibliography{biblio}
\bibliographystyle{amsalpha}
\end{document}